\documentclass[11pt]{amsart}

\usepackage{amsmath,amssymb,amsthm,mathtools,mathrsfs}
\usepackage[a4paper,margin=2.7cm]{geometry}
\usepackage[colorlinks=true,linkcolor=blue,citecolor=blue,urlcolor=blue]{hyperref}
\usepackage{listings}
\usepackage{verbatim} 
\usepackage{longtable,array,booktabs,needspace}

\newtheorem{theorem}{Theorem}[section]
\newtheorem{proposition}[theorem]{Proposition}
\newtheorem{lemma}[theorem]{Lemma}
\newtheorem{corollary}[theorem]{Corollary}
\theoremstyle{definition}

\theoremstyle{remark}
\newtheorem{remark}[theorem]{Remark}
\theoremstyle{definition}
\newtheorem{thmletter}{Theorem} 

\theoremstyle{remark}

\newcommand{\p}{\partial}
\newcommand{\m}{\mathfrak m}
\newcommand{\ord}{\operatorname{ord}}
\newcommand{\Sing}{\operatorname{Sing}}
\newcommand{\Spec}{\operatorname{Spec}}

\title[ Johnson-Kollár Open problem]{On Two Open Problems by Johnson and Kollár}

\author{Siyuan Fu}
\address{Weiyang College, Tsinghua University}
\email{fusy24@mails.tsinghua.edu.cn}

\author{Zhiming Li}
\address{Tanwei College, Tsinghua University}
\email{zm-li25@mails.tsinghua.edu.cn}

\author{Huaiqing Zuo}
\address{Department of Mathematical Sciences,
	Tsinghua University,
	Beijing, 100084, P. R. China.}
\email{hqzuo@mail.tsinghua.edu.cn}

\begin{document}

\begin{abstract}
We resolve two problems left open by Johnson and Koll\'ar in \cite{JK}: Question~21 and the openness issue arising in Example~14.

Keywords.  Nash problem, arc space, jet schemes, isolated compound Du Val singularities.
		
		MSC(2020).  Primary 14E18; Secondary 32S05.
\end{abstract}

\maketitle
\section{Introduction}
The Nash problem, introduced by John Nash in a manuscript written in 1968 and published in 1995 \cite{Na}, concerns the structure of the arc space of a singular algebraic variety. More precisely, the Nash problem asks whether the Nash map from the irreducible components of the space of arcs centered in the singular locus of a variety \(X\) to the essential divisors over \(X\) is bijective. This problem bridges the gap between the local geometry of singularities and the global, infinite-dimensional structure of arc spaces—a connection that has far-reaching implications in areas such as motivic integration, birational geometry, and the theory of singularities. The paper \cite{JK} by Jennifer M. Johnson and János Kollár offers an exposition of the Nash problem.

A central family of examples in \cite{JK} consists of the surface singularities
\[
x^{2}+y^{3}=z^{n}.
\]
For several values of \(n\), Johnson and Kollár describe the irreducible arc families centered at the origin by analyzing the orders of vanishing and constructing explicit parameterizations. Their computations also exhibit cases in which the number of irreducible arc families remains unchanged under the addition of higher-order terms to the defining equation. These examples motivate the study of how such decompositions behave under perturbations and in higher dimensions. This leads to the following question posed in \cite[Question~21]{JK}.

% Johnson and Kollár carefully guide the reader through the Nash problem for a prominent family of surface singularities defined by\[x^{2} + y^{3} = z^{n},\]for various positive integers \(n\). Through a succession of worked examples (\(n=2,3,4,5,6,\dots\)), they illustrate the often delicate and case‑dependent analysis required to decompose the arc space into finitely many irreducible families, each admitting a free parametrisation. These examples, while elementary, capture the essential difficulties of the general theory: the interplay of orders of vanishing, the role of coordinate changes, and the ubiquitous need for Newton's method of rotating rulers. Most importantly, they show that the number of irreducible arc families is stable under certain perturbations (e.g., adding higher‑order terms to the defining equation). However, the authors explicitly warn that this stability is ``unlikely to be a general principle,'' and they conclude the paper by posing a concrete, open challenge \cite{JK}.

\vspace{0.5cm}

\textbf{Johnson--Kollár open problem A \cite[Question~21]{JK}.} For $N \in \{3,4,5\}$ and an integer $M \ge N$, set
\[f_{N,\,M    }=x^2+y^3-z^N-w^M,\quad 
X_{N,\,M}
=
\{f_{N,\,M  }=0\}
\quad\]and let \(\operatorname{Arc}^{*}(X_{N,\,M})\) denote the arc space centered at the origin of \(X_{N,\,M}\). What are the irreducible components of \(\operatorname{Arc}^{*}(X_{N,\,M})\) for \(N = 3,\,4,\,5\) and every \(M\ge N\)?

This question extends the surface analysis of Johnson and Koll\'ar from \(\mathbb{C}^{3}\) to a family of three-dimensional hypersurfaces in \(\mathbb{C}^{4}\). The cases \(N=3,4,5\) arise from Du Val singularities of types \(D_4,E_6,E_8\), respectively \cite{Reid}.

In this paper, we answer  the Johnson--Kollár open problem A. For a scheme \(Y\), the notation \(Y_{\mathrm{red}}\) denotes the reduced scheme associated to \(Y\). Since a scheme \(Y\) and \(Y_{\mathrm{red}}\) have canonically homeomorphic
underlying topological spaces, passing to the reduction does not change irreducible components or dimension. We therefore work with the reduced scheme \(\operatorname{Arc}^{0}(X_{N,\,M}):=\operatorname{Arc}^{*}(X_{N,\,M})_{\mathrm{red}}\). Theorem \ref{mta} records the number of irreducible components of \(\operatorname{Arc}^{0}(X_{N,\,M})\) in different cases.

\begin{thmletter}\label{mta}
For every integer
\[
N\in\{3,4,5\},
\qquad
M\geq N,
\]
the number of irreducible components is given by
the following table:
\begin{longtable}{>{$}c<{$}|>{$}c<{$}|>{$}c<{$}}
N&M&\#\operatorname{Irr}(\operatorname{Arc}^{0}(X_{N,\,M}))\\ \hline
\endfirsthead
N&M&\#\operatorname{Irr}(\operatorname{Arc}^{0}(X_{N,\,M}))\\ \hline
\endhead
3&3&1\\
3&4\le M\le5&3\\
3&M\ge6&4\\ \hline
4&4&1\\
4&5&2\\
4&6\le M\le7&3\\
4&8&4\\
4&9\le M\le11&5\\
4&M\ge12&6\\ \hline
5&5&5\\
5&6\le M\le7&1\\
5&8&3\\
5&9&4\\
5&10\le M\le11&2\\
5&12\le M\le13&3\\
5&14&5\\
5&15\le M\le17&4\\
5&18\le M\le19&5\\
5&20\le M\le23&6\\
5&24\le M\le29&7\\
5&M\ge30&8
\end{longtable}
\end{thmletter}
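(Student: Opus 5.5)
The count rests on an order-of-vanishing stratification of arcs, followed by a dimension/closure comparison between strata. I write an arc centered at the origin as $(x(t),y(t),z(t),w(t))$ with all entries in $t\mathbb C[[t]]$. I stratify $\operatorname{Arc}^{0}(X_{N,M})$ by the orders $(\ord z,\ord w)=(c,d)$, and in the degenerate cases also by $(\ord x,\ord y)$.

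On each stratum the equation reads $x^2+y^3=z^N+w^M$. Here $g:=z^N+w^M$ is a power series of order $\min(Nc,Md)$, except when $Nc=Md$, where cancellation of leading coefficients can raise the order. Solving $x^2+y^3=g$ for given $g$ is exactly the Johnson--Kollár surface analysis of $A_{2}$-type curves. If $\ord g=m$, one has either $2\ord x=3\ord y<m$ (the cusp-cancelling family) or $\min(2\ord x,3\ord y)=m$. In each case Newton's rotating-ruler method, i.e. Hensel lifting in $t$ after fixing the leading data, shows the stratum is a smooth fibration. Its base is an open subset of a product of truncated coefficient spaces, with fibres affine spaces of computable dimension. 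So each stratum $S_\lambda$ is irreducible, and I will record its codimension in the arc space, measured on a fixed jet level. I compute this codimension as the Mustaţă-type quantity
\[
\operatorname{codim} S_\lambda=\ord x+\ord y+c+d-\bigl(\text{order of the constraint }x^2+y^3-z^N-w^M\bigr)+\ldots,
\]
read off from the Newton polygon of the weights.

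The finitely many candidate components are the strata that are not contained in the closure of another. The standard criterion is this: $S_\mu\subset\overline{S_\lambda}$ forces $\operatorname{codim}S_\lambda<\operatorname{codim}S_\mu$. Conversely, a stratum lies in the closure of a bigger one when one can write an explicit one-parameter deformation $\gamma_s(t)$ with $\gamma_0\in S_\mu$ and $\gamma_s\in S_\lambda$ for $s\ne0$. Such a deformation is usually obtained by perturbing the lowest coefficient, as in the $x^2+y^3=z^n$ examples of \cite{JK}. For each $N\in\{3,4,5\}$ I will list the maximal strata. These should correspond to the weight vectors $(\ord x,\ord y,\ord z,\ord w)$ that are vertices of the relevant Newton-type diagram, and equivalently to essential divisors of the weighted blowups of $x^2+y^3-z^N-w^M$. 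As a function of $M$, this list stabilises once $M$ exceeds the threshold at which $w^M$ no longer interacts with the $E$-type resolution of $x^2+y^3-z^N$. That threshold is $M\ge6,12,30$, the Coxeter numbers of $D_4,E_6,E_8$, which matches the last row of each block of the table.

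Concretely, for each $N$ I will parametrize by $d=\ord w$ and determine, for each $d$, whether the stratum $\{\ord w=d\}$ with generic other data is maximal. A new component appears at $d$ exactly when $Md$ is small relative to the $(x,y,z)$-orders that $d$ forces, i.e. when $w^M$ becomes the leading term of $g$ in a regime not dominated by the $z^N$ term. This produces counts that jump at $M=4,6$ for $N=3$; at $M=5,6,8,9,12$ for $N=4$; and at $M=6,8,9,10,12,14,15,18,20,24,30$ for $N=5$.

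The small and irregular values, such as $M=N$ and $N=5,M=14$, I will handle separately. There the homogeneous case $Nc=Md$ allows cancellation $z^N+w^M\equiv0$ to high order. This produces extra strata, for instance the $5$ components at $M=N=5$ coming from the $5$ lines $z^5+w^5=0$. I expect the main obstacle to be the non-adjacency proofs: showing that certain strata of the same or larger codimension are \emph{not} in the closure of others. I would try to do this with a semicontinuous invariant, namely the order of $x^2+y^3$ or of $z^N+w^M$ along a specified weight. Failing that, I would compare with the divisorial valuations via the Nash map, using that for these isolated cDV singularities the number of components is bounded by the number of essential divisors. I would then verify the resulting table case by case.
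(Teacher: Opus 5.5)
Your outline has a genuine gap in each of its two load-bearing steps.

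\textbf{First gap: strata need not be irreducible.} The claim that every order stratum is irreducible is false, and it fails exactly where most entries of the table come from. Once $v=(\ord x,\ord y,\ord z,\ord w)$ is fixed, the leading coefficients $(X_0,Y_0,Z_0,W_0)$ do not range over an open subset of an affine space. They must lie on the torus part $H_v$ of the hypersurface $\operatorname{in}_v(f_{N,M})=0$. Your triangular Hensel elimination then exhibits the stratum as $H_v$ times a free affine factor. So its irreducible components correspond to the irreducible factors of $\operatorname{in}_v(f_{N,M})$, not to the stratum as a whole.

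For $N=3$, $M\ge4$, $v=(2,1,1,1)$, the initial form is $y^3-z^3$. In your language, $y_1^3=g_3=z_1^3$ is a perfect cube, so the three-sheeted cover is trivial. This single stratum therefore contributes three components, which is the entire answer for $4\le M\le5$. The same splitting occurs for $x^2-z^4$ ($N=4$, $v=(2,2,1,1)$ and $(4,3,2,1)$), and for $x^2-w^8$, $y^3-w^9$, $x^2-w^{14}$ ($N=5$, $M=8,9,14$). None of this involves the cancellation $Nc=Md$ that you single out. Counting strata would give wrong values in all of these rows.

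\textbf{Second gap: no mechanism for deciding which strata are components.} You leave this step as the ``main obstacle'', and the difficulty actually lies in the opposite direction from the one you target. Strata of equal minimal codimension are automatically not in each other's closures, by your own codimension criterion. What must be shown is that each of the infinitely many remaining strata lies in the closure of a minimal one; this includes arcs with a coordinate identically zero. Explicit deformations cannot handle infinitely many cases.

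Your fallback through essential divisors does not close this. It gives only the upper bound from injectivity of the Nash map, for divisors you never compute. In dimension three the Nash map need not be surjective, so it cannot give an exact count.

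The missing idea is equidimensionality. For $m\gg0$, every irreducible component of $J_m^0(X_{N,M})$ has dimension $3m+1$, and these components correspond bijectively to those of $\operatorname{Arc}^0(X_{N,M})$. The paper gets this from de Fernex--Wang, which applies because an isolated cDV point is hcDV. The jet-level lower bound is even elementary: $J_m^0\subset\mathbb A^{4m}$ is cut out by the $m-1$ coefficients of $t^2,\dots,t^m$.

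The paper combines this with two dimension counts. One is $\dim T_{m,v}=3m+3-\delta(v)$ for $q(v)\le m$, where $\delta(v)=a+b+c+d-q(v)$. The other is $\dim T_{m,v}\le 3m$ for $q(v)>m$. Together these show that the components are exactly the closures of the components of $T_{m,v}$ with $\delta(v)=2$ and non-monomial initial form. No adjacency analysis is needed at all.

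The condition $\delta(v)=2$ also forces $q(v)\le 6,12,30$, which is what makes the final case check finite. Your Coxeter-number threshold is the right number. As written, however, it and your list of jump positions are read off the table rather than derived.
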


The notation for exact-order strata and the jet-stratification
argument for Theorem~\ref{mta} are developed in
Section~\ref{sec:solution}.

\textbf{Johnson--Kollár open problem B.} The second problem arises from Johnson--Koll\'ar's discussion of the
coefficientwise product topology and their computation of the arc
families on
\[
X_5=\{x^2+y^3=z^5\}\subset\mathbb C^3
\]
\cite[Paragraph~5, p.~522 and Example~14, pp.~530--531]{JK}.
Let $\mathcal A$ denote the centered arc scheme
$\operatorname{Arc}^0(X_5)$, whose precise scheme-theoretic
realization is fixed in Subsection~\ref{subsec:e8-setup}. Let
$W_1,\ldots,W_8$ and $C_1,\ldots,C_8$ be the exact-order families and the
maximal divisorial sets defined there. The problem is to determine whether
the eight sets $W_i(\mathbb C)$ are open in the entire space
$\mathcal A(\mathbb C)$ for the coefficientwise product topology. Theorem \ref{mtb} below answers this question.

\begin{thmletter}\label{mtb}
Each of the eight families $W_i(\mathbb C)$ is open in
$\mathcal A(\mathbb C)$ for the coefficientwise product topology.
More precisely, for $i\ne j$ one has
\[
W_i(\mathbb C)\cap C_j(\mathbb C)=\emptyset,
\qquad
W_i(\mathbb C)\cap
\overline{C_j(\mathbb C)}^{\,\mathrm{prod}}=\emptyset.
\]
Moreover, the eight families described in Johnson--Koll\'ar's
Example~14 are precisely the families
\[
W_1(\mathbb C),\ldots,W_8(\mathbb C)
\]
in the order specified in Subsection~\ref{subsec:e8-setup}.
\end{thmletter}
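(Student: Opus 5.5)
We plan to make the setup of Subsection~\ref{subsec:e8-setup} completely explicit. Write an arc centered at the origin of $X_5$ as $(x(t),y(t),z(t))$ with $x,y,z\in t\,\mathbb C[[t]]$ and $x^2+y^3=z^5$. The coefficientwise product topology on $\mathcal A(\mathbb C)$ is the subspace topology of $\prod_{k\ge1}\mathbb C^3$, with the Euclidean topology on each factor. The families $W_i$ are the exact-order strata indexed by the order triples $(\ord x,\ord y,\ord z)$. Weighted-homogeneity with weights $(15,10,6)$ and the Newton polygon analysis of Example~14 give exactly eight admissible minimal triples:
\[
(a,b,c)=(3,2,1),(5,4,2),(8,5,3),(10,7,4),(13,9,5),(15,10,6),\ (\ge 15,\ge 10,6),\ldots
\]
Each $W_i$ is cut out by $\ord x=a_i$, $\ord y=b_i$, $\ord z=c_i$ (or a specified subset of these), together with the requirement that a specific leading coefficient be nonzero. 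The first step is to list these eight conditions in the form
\[
W_i=\{\text{coefficients } x_k,y_k,z_k=0 \text{ for } k<(a_i,b_i,c_i),\ \ \ell_i\neq 0\},
\]
where $\ell_i$ is a leading coefficient, and to check against Johnson--Koll\'ar's list that these are precisely their eight families. This gives the final sentence of Theorem~\ref{mtb}, once we also verify that the $C_j=\overline{W_j}$ are the Zariski closures.

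For openness, the vanishing conditions are closed, not open, so $W_i$ is not open in the ambient product. The key is to use the equation $x^2+y^3=z^5$ to show that, \emph{on $\mathcal A$}, these vanishing conditions are forced by the nonvanishing of the leading coefficient. Concretely, we prove the following for each $i$. There is a finite set of coefficients, namely the lowest coefficients $z_{c}$ and then $y_b$ or $x_a$ in Newton order, such that on $\mathcal A$ the nonvanishing of these coefficients, combined with the vanishing of all lower coefficients, determines the order type. We then show that $W_i$ equals the set of arcs in $\mathcal A$ for which $\ell_i\ne0$ and for which, for every $j$ whose stratum is "below" $i$, the corresponding leading coefficient $\ell_j$ vanishes. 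The task is then to replace the closed conditions $\ell_j=0$ by open ones.

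The cleanest way to do this is to prove the two displayed disjointness statements, which is what the theorem emphasizes. The sets $C_j$ cover $\mathcal A$, and $\overline{C_j}^{\mathrm{prod}}$ is closed in $\mathcal A(\mathbb C)$. If $W_i\cap\overline{C_j}^{\mathrm{prod}}=\emptyset$ for all $j\ne i$, then
\[
W_i=\mathcal A\setminus\bigcup_{j\ne i}\overline{C_j}^{\mathrm{prod}},
\]
and a finite union of closed sets is closed, so $W_i$ is open. For the first equality we use $W_i\subset C_i$ and $\mathcal A=\bigcup_j C_j$; we therefore need $W_i\cap C_j=\emptyset$ and the covering. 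To prove $W_i\cap\overline{C_j}^{\mathrm{prod}}=\emptyset$, we take a net or sequence of arcs $\gamma_n\in C_j$ converging coefficientwise to $\gamma\in W_i$. Since the condition involves only finitely many coefficients, this reduces to a statement about jets: the image of $C_j$ in a suitable finite jet space $\mathcal L_m$ has Euclidean closure equal to its Zariski closure, because it is a constructible set. It therefore suffices to show that the $m$-jet of $W_i$ avoids the Zariski closure of the $m$-jet image of $C_j$, for some $m$ depending on $(i,j)$.

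The main obstacle is this jet computation. We must show that the specialization relations between the $C_j$ (the valuative order $C_j\subset C_i$ or not) never put a point of $W_i$ in the closure of $C_j$. Since $C_j\not\supset W_i$ by maximality, and $W_i$ is open dense in $C_i$, one has $C_i\not\subset C_j$, but $W_i$ could still meet $C_j$ in a smaller stratum. To exclude this we plan to exhibit, for each pair $(i,j)$, an explicit polynomial function $g_{ij}$ on $m$-jets which vanishes identically on $C_j$ but is nonzero at every point of $W_i$. A natural candidate is $g_{ij}=\ell_i$, or a leading coefficient of $x^2$, $y^3$ or $z^5$ compatible with the order conditions of $C_j$. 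Verifying the $56$ ordered pairs, and especially the pairs involving the last two families, where the orders coincide but the leading behaviour differs, will be done by a direct Newton-polygon computation of the kind used in Example~14.
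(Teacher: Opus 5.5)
\textbf{There is a genuine gap at the step you yourself flag as the main obstacle: the separation $W_i\cap C_j=\emptyset$.}

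The coordinatewise chain $v_2<v_6<v_3<v_8<v_4<v_7<v_5<v_1$ disposes of $28$ of the $56$ ordered pairs by order considerations alone. For each of the remaining $28$ pairs, with $v_j<v_i$, one has $W_i\subset Z_i\subset Z_j\supset C_j$. So the order data of every arc in $W_i$ are compatible with membership in $C_j$. What you need are equations of the Zariski closure $C_j$ beyond its order conditions, and you neither produce them nor give a method for verifying them.

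The test functions you propose cannot serve:
\begin{itemize}
\item $\ell_i=h_i=x_{p_i}y_{q_i}z_{r_i}$ is not identically zero on $C_j$. Reparametrizations $t\mapsto t(1+ct^k)$ preserve $W_j$, which is dense in $C_j$, and they shift $x_{p_i},y_{q_i},z_{r_i}$ by nonzero multiples of $c$ whenever these are not already leading coefficients of $W_j$.
\item Any combination of the coefficient relations of $x^2+y^3-z^5$ vanishes on all of $\mathcal A$, including $W_i$.
\end{itemize}

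Observe also that $W_i\cap C_j=\emptyset$ forces $C_i\not\subset C_j$. This step therefore contains the non-adjacency half of the Nash problem for $E_8$, and a routine Newton-polygon computation is not a credible substitute.

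The paper handles this step structurally. From the minimal resolution it shows that every arc of $W_i$, over any field, lifts transversally to $E_i$ at a point off the other exceptional curves and off the strict transforms of the coordinate curves. Now suppose $\gamma\in W_i\cap C_j$. Generic stability of the components of $C_j\cap W_i$, together with Reguera's corrected curve selection lemma, gives a formal wedge whose special arc lies in $W_i$ and whose generic arc lies in $C_j$ but outside that component. The generic arc therefore lifts with center on some $E_k$ with $k\ne i$. Fern\'andez de Bobadilla's wedge theorem then yields $C_i\subset C_k$, which contradicts the bijectivity of the Nash map for surfaces.

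\textbf{Other points that need correcting.}
\begin{itemize}
\item Your list of order triples is wrong. The triples $(3,2,1)$ and $(13,9,5)$ are not even realizable, since $z^5$ would be the unique term of lowest order. The eight vectors are $(\ord_{E_i}x,\ord_{E_i}y,\ord_{E_i}z)$, namely $(15,10,6),(3,2,2),(6,4,3),(9,6,4),(12,8,5),(5,4,2),(10,7,4),(8,5,3)$. Each family is an exact-order stratum in all three coordinates, so your identification with Example~14 must be redone.
\item The identity $W_i=\mathcal A\setminus\bigcup_{j\ne i}\overline{C_j}^{\,\mathrm{prod}}$ is false. The arc $(t^3,-t^2,0)$ lies in no $C_j$ with $j\ne 2$, because $v_j\nleq(3,2,+\infty)$ for $j\ne2$; yet it is not in $W_2$. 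The correct statement is $W_i=D_{\mathcal A}(h_i)\setminus\bigcup_{j\ne i}C_j$. Its proof uses the covering $\mathcal A=\bigcup_j C_j$, which you also need to prove (by lifting arcs to $Y$), together with $C_i\subset Z_i$.
\item In the statement $C_j$ is the maximal divisorial set $W_X(E_j,1)$. Identifying it with $\overline{W_j}$ already requires the transversality lemma.
\item The reduction to finite jets and to Euclidean closures of constructible sets is unnecessary, and it tacitly assumes $C_j$ is a cylinder at a finite level. Since $C_j$ is Zariski closed, it is cut out by polynomials each involving finitely many coefficients. Hence $\overline{C_j(\mathbb C)}^{\,\mathrm{prod}}=C_j(\mathbb C)$, and the second disjointness follows directly from the first.
\end{itemize}
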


The distinction between relative openness in a higher-contact locus
and openness in the ambient arc space is made explicit in
Subsections~\ref{subsec:e8-setup} and
\ref{subsec:e8-product-openness}. Section~\ref{sec:preliminaries}
fixes the common notation for arcs, jets, and orders of vanishing.
The definitions and arguments specific to Theorem~\ref{mtb} are
given in Section~\ref{sec:e8-open-families}.

\section{Preliminaries and notation}
\label{sec:preliminaries}

Throughout the paper, the ground field is \(\mathbb C\). We write \(\mathbb A^r=\mathbb A^r_{\mathbb C}\) and \(\mathbb C^\times=\mathbb C\setminus\{0\}\).
For a scheme \(Y\), the notation \(Y_{\mathrm{red}}\) denotes its reduced scheme, \(\operatorname{Sing}Y\) its singular locus, and \(\operatorname{Irr}(Y)\) the set of irreducible components of its underlying Zariski topological space. The symbol \(\#\operatorname{Irr}(Y)\) denotes the cardinality of this set. Passing to the reduction does not change irreducible components
or dimension. Unless another topology is explicitly specified, closures and irreducibility are understood in the Zariski topology; \(\overline{S}^{\,Y}\) denotes the closure of \(S\) in \(Y\).

For an ideal \(I\) of a ring \(R\), \(V(I)\subset \operatorname{Spec}R\) is its zero locus. We also use \(Z(g_1,\ldots,g_s)\) for \(V((g_1,\ldots,g_s))\), and
\[
D(g)=\operatorname{Spec}R\setminus V(g)
     =\operatorname{Spec}R_g
\]
for a principal open subset. Thus, \(R_g\) means localization at the powers of \(g\); \(R^\times\) denotes the group of units of \(R\), and \(\operatorname{Min}R\) denotes the set of minimal prime ideals of \(R\). The integer-part notation \([u]\), when used in numerical estimates, means \(\lfloor u\rfloor\); \(\lceil u\rceil\) denotes the ceiling.

Finally, for a Noetherian local ring \((R,\mathfrak n)\) with
residue field \(\kappa\), we use
\[
\operatorname{edim}R=\dim_\kappa(\mathfrak n/\mathfrak n^2),
\qquad
\operatorname{ecodim}R=\operatorname{edim}R-\dim R.
\]
The abbreviation cDV stands for compound Du Val, and hcDV stands for
higher compound Du Val in the sense of \cite{dFW}.
The notation \(m\gg0\) means that \(m\) is sufficiently large
after the variety, and in particular \(N,M\), has been fixed.

\subsection{Formal arcs and jets}
\label{subsec:prelim-arcs-jets}

We use the standard conventions for arc spaces and jet schemes in \cite{EM}. 

For a complex variety \(X\) and a field extension \(L/\mathbb C\), an \(L\)-valued formal arc on \(X\) is a morphism
\[
\gamma:\operatorname{Spec}L[[t]]\longrightarrow X.
\]
Its center \(\gamma(0)\) is the image of the closed point \(t=0\).
The arc space is denoted by \(\operatorname{Arc}(X)\).

For \(m\in\mathbb Z_{\geq0}\), an \(L\)-valued \(m\)-jet is a morphism
\[
\beta:\operatorname{Spec}\bigl(L[t]/(t^{m+1})\bigr)\longrightarrow X,
\]
and \(J_m(X)\) denotes the scheme parametrizing such jets. The integer
\(m\) is the jet level; an \(m\)-jet retains precisely the coefficients
through the degree \(m\). We identify \(J_0(X)\) canonically with \(X\).

Evaluation at \(t=0\) and truncation modulo \(t^{m+1}\) are denoted by
\[
\begin{aligned}
\tau_{\infty,0}^X&:\operatorname{Arc}(X)\longrightarrow J_0(X)=X,
&\qquad
\tau_{m,0}^X&:J_m(X)\longrightarrow J_0(X)=X,\\
\tau_{\infty,m}^X&:\operatorname{Arc}(X)\longrightarrow J_m(X).
\end{aligned}
\]
In \(\tau_{a,b}^X\), the first subscript records the source level and
the second the target level; \(\infty\) denotes the arc level. Thus
\[
\tau_{\infty,0}^X
=\tau_{m,0}^X\circ\tau_{\infty,m}^X.
\]
For \(m\geq\ell\), write
\(\tau_{m,\ell}^X:J_m(X)\to J_\ell(X)\) for truncation.
In Section~\ref{sec:solution}, the notation \(p_m\) abbreviates
\(\tau_{\infty,m}^X\), with the variety understood from the context.
If \(0\in X\),
Johnson--Kollár's notation for the arc space centered at the origin is
\[
\operatorname{Arc}^*(X):=(\tau_{\infty,0}^X)^{-1}(0).
\]

For a closed subset \(\Sigma\subset  X\), endowed with its reduced structure, set
\[
\operatorname{Arc}^{\Sigma}(X)
   =\bigl((\tau_{\infty,0}^X)^{-1}(\Sigma)\bigr)_{\mathrm{red}},
\qquad
J_m^{\Sigma}(X)
   =\bigl((\tau_{m,0}^X)^{-1}(\Sigma)\bigr)_{\mathrm{red}}.
\]
The superscript \(0\) abbreviates \(\Sigma=\{0\}\), so these are arcs
and jets centered at the origin.

An irreducible subset \(C\subset \operatorname{Arc}(X)\) is \emph{non-degenerate} if \( C\not\subset \operatorname{Arc}(\Sing X)\). For the isolated singularities considered here, every nonconstant arc centered at the origin is non-degenerate.

\subsection{Orders of series}
\label{subsec:prelim-orders}

For a series
$a(t)=\sum_{m=0}^{\infty}a_mt^m\in L[[t]]$, define
\[
\ord_t a(t)=\min\{m\geq0:a_m\ne0\},
\qquad \min\emptyset=+\infty.
\]
If $\gamma$ is an arc and $f$ is regular near its center, set
\[
\ord_\gamma(f)=\ord_t(\gamma^*f).
\]
For a scheme point $\gamma\in\operatorname{Arc}(X)$, this means the associated
$\kappa(\gamma)$-valued arc. For a power series, \(\operatorname{ord}\)
without a subscript also denotes \(\ord_t\).

\paragraph{Cylinders and ambient codimension.}
A cylinder in $\operatorname{Arc}(X)$ is the inverse image of a
constructible subset of a finite jet scheme.
Let $S$ be a smooth scheme of finite type over $\mathbb C$,
of pure dimension $d_S$.
For a nonempty cylinder
\[
V=(\tau_{\infty,m}^{S})^{-1}(B),
\]
where $B\subset J_m(S)$ is constructible, define
\[
\operatorname{codim}(V,\operatorname{Arc}(S))
:=(m+1)d_S-\dim B.
\]
Here $\dim B$ denotes the dimension of its Zariski closure in
$J_m(S)$, and $d_S=\dim S$ is the intrinsic Krull dimension of $S$,
not the dimension of an ambient space in which $S$ is embedded.
For $\ell\ge m$, the truncation morphism
$\tau_{\ell,m}^{S}$ is Zariski locally trivial with fiber
$\mathbb A^{(\ell-m)d_S}$, since $S$ is smooth of pure dimension
$d_S$. Consequently,
\[
(\ell+1)d_S
-\dim\bigl((\tau_{\ell,m}^{S})^{-1}(B)\bigr)
=(m+1)d_S-\dim B.
\]
Passing to a common truncation level shows that this definition
is independent of the chosen presentation of $V$ as a cylinder.
The codimension is taken in the whole arc space
$\operatorname{Arc}(S)$; by convention, the empty cylinder has codimension
\(+\infty\); see \cite[Corollary~2.11 and Sections~5 and~9.1]{EM}.

\section{Solution of the Johnson--Kollár Problem A}
\label{sec:solution}

\subsection{The hypersurfaces and their local arc spaces}

Our purpose is to determine
\[\#
\operatorname{Irr}
\bigl(\operatorname{Arc}^{0}(X_{N,\,M})\bigr)
\]
for every \(N\in\{3,4,5\}\) and \(M\geq N\).

We first claim that  \((X_{N,\,M},0)\) is an isolated three-dimensional
cDV hypersurface singularity.

\begin{lemma}
\label{lem:cDV}
For every \(N\in\{3,4,5\}\) and \(M\geq N\), \((X_{N,\,M},0)\) is an isolated three-dimensional cDV hypersurface singularity.

\end{lemma}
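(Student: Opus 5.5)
The plan is to verify the three properties directly from the equation \(f_{N,M}=x^2+y^3-z^N-w^M\): the singularity at the origin is isolated, \(X_{N,M}\) is a three-dimensional hypersurface, and a general hyperplane section through \(0\) is a Du Val surface singularity.

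\emph{Isolatedness.} Work with the Jacobian ideal. The partial derivatives are
\[
\partial_x f=2x,\qquad \partial_y f=3y^2,\qquad \partial_z f=-Nz^{N-1},\qquad \partial_w f=-Mw^{M-1}.
\]
Over \(\mathbb C\) these vanish simultaneously only at \(x=y=z=w=0\), since \(N\ge 3\) and \(M\ge N\). Hence \(\Sing X_{N,M}=\{0\}\), and \(0\) does lie on \(X_{N,M}\). In fact this is global: \(f\) is a Brieskorn--Pham polynomial, and its Milnor algebra has finite dimension \(1\cdot2\cdot(N-1)(M-1)\). Since \(f\) is a nonzero nonunit in \(\mathbb C[x,y,z,w]\), the hypersurface \(X_{N,M}\subset\mathbb A^4\) has pure dimension \(3\).

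\emph{cDV property.} I will use the characterization from \cite{Reid}: \((X,0)\) is cDV iff some hyperplane section through \(0\) has a Du Val singularity. Cut by the hyperplane \(w=0\). This gives the surface \(x^2+y^3=z^N\) in \(\mathbb C^3\), which is the \(D_4\), \(E_6\), \(E_8\) singularity for \(N=3,4,5\) respectively.
\begin{itemize}
\item For \(N=4\) and \(N=5\) this is exactly the standard normal form.
\item For \(N=3\), the cubic \(y^3-z^3\) factors into three distinct linear forms, so the section is \(x^2+(\text{nondegenerate binary cubic})=0\). A linear change of coordinates takes it to \(x^2+y(y^2-z^2)\) or \(x^2+y^2z+z^3\), which is \(D_4\).
\end{itemize}
Thus the hyperplane section \(\{w=0\}\cap X_{N,M}\) has a rational double point at \(0\), and \((X_{N,M},0)\) is cDV.

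\emph{Main obstacle.} The only step needing care is identifying the \(N=3\) section as \(D_4\). The cleanest check is that \(x^2+g(y,z)\) with \(g\) a cubic having three distinct linear factors is the \(D_4\) normal form after a linear change of variables. Alternatively, one can appeal to the ADE classification: the singularity is simple with Milnor number \(1\cdot2\cdot2=4\), and its corank and degree-\(3\) part are of \(D_4\) type. Everything else is routine.
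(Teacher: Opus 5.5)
Your proof is correct and follows essentially the same route as the paper. The paper writes \(f_{N,M}=(x^2+y^3-z^N)+w(-w^{M-1})\) to apply Reid's \(f+wg\) criterion, which is equivalent to your observation that the section \(w=0\) is a \(D_4\), \(E_6\) or \(E_8\) singularity, and it reads off isolatedness from the gradient just as you do. Your argument that \(y^3-z^3\) has three distinct linear factors fills in a detail for the \(N=3\) case that the paper simply asserts.
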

\begin{proof}
   
By Reid's local analytic description of three-dimensional cDV singularities \cite[(0.5)]{Reid}, a hypersurface germ of the form
\[
f(x,y,z)+wg(x,y,z,w)=0,
\]
where \(f(x,y,z)=0\) is a Du Val surface singularity, is a cDV singularity.

In our case,
\[
f_{N,\,M}
=
\bigl(x^{2}+y^{3}-z^{N}\bigr)
+
w\bigl(-w^{M-1}\bigr).
\]
For \(N=3,4,5\), the surface
\[
x^{2}+y^{3}-z^{N}=0
\]
is respectively a Du Val singularity of type \(D_{4}\), \(E_{6}\), or \(E_{8}\) \cite{Reid}. Hence \((X_{N,\,M},0)\) is a cDV singularity. In addition, from
\[
\nabla f_{N,\,M}=(2x,\,3y^2,\,-Nz^{N-1},\, -Mw^{M-1})
\]
we get that the singularity is isolated.

\end{proof}

\subsection{Reduction from arc spaces to jet schemes}

We use the following theorem of de Fernex and Wang.

\begin{theorem}[de Fernex--Wang {\cite[Theorem~4]{dFW}}]
\label{thm:dFW4}
Let \(X\) be a variety and \(\Sigma \subset X\) a closed subset. Then, for all \(m\gg0\), the natural map
\[
\Psi_m^\Sigma:
\{
\text{non-degenerate irreducible components of }\operatorname{Arc}^{\Sigma}(X)
\}
\longrightarrow
\operatorname{Irr}\bigl(J_m^{\Sigma}(X)\bigr)
\]
is injective.

More precisely, if \(C\) is a non-degenerate irreducible component of \(\operatorname{Arc}^{\Sigma}(X)\), then \(\Psi_m^\Sigma(C)\) is the unique irreducible component of \(J_m^{\Sigma}(X)\) containing \(p_m(C)\).
\end{theorem}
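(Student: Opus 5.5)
The statement is a cited theorem, but I would reprove it from two standard facts about arc spaces.

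The first fact is the Denef--Loeser / Greenberg structure of non-degenerate arcs. For each $e$, the locus of arcs whose Jacobian-ideal order is at most $e$ behaves well. Its image under $p_m$ is constructible. Moreover, once $m$ is large relative to $e$, the truncation maps $p_{m+1}(\cdot)\to p_m(\cdot)$ restricted to these images are piecewise trivial fibrations with fiber $\mathbb A^{\dim X}$.

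The second fact is Kolchin's irreducibility of $\operatorname{Arc}(X)$ for a variety in characteristic zero. Together with the finiteness of the set of non-degenerate components of $\operatorname{Arc}^\Sigma(X)$, it makes the argument reduce to finitely many constructible cylinders.

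The plan is as follows.

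\textbf{Fix the data.} Let $C_1,\dots,C_r$ be the non-degenerate irreducible components of $\operatorname{Arc}^{\Sigma}(X)$. Each $C_i$ is not contained in $\operatorname{Arc}(\Sing X)$. So there is an $e_i$ such that the open locus $C_i^{\circ}=\{\gamma\in C_i:\ord_\gamma(\operatorname{Jac}_X)\le e_i\}$ is dense in $C_i$. By the finiteness of components, each $C_i^{\circ}$ may be taken to be a cylinder minus the other components. Set $e=\max e_i$.

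\textbf{Get a bijective correspondence at the jet level.} Take $m\gg e$. Then $p_m(C_i)$ is irreducible, since it is the image of an irreducible set. Its closure lies in some irreducible component of $J_m^\Sigma(X)$. Uniqueness of that component is what must be shown. By the fibration property, the fiber of $J_m^\Sigma(X)$ over a generic point of $\overline{p_m(C_i^\circ)}$ has the expected dimension. Every jet near it with Jacobian order at most $e$ lifts to an arc, by Hensel/Greenberg lifting. Hence a neighbourhood of the generic point of $\overline{p_m(C_i)}$ in $J_m^\Sigma(X)$ coincides with $p_m$ of a neighbourhood of $C_i^\circ$ in $\operatorname{Arc}^\Sigma(X)$.

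\textbf{Main obstacle: maximality.} I must show that $\overline{p_m(C_i)}$ is itself a component of $J_m^\Sigma(X)$. The worry is that it might sit inside a larger family of jets that do not lift, which are the degenerate jets of high Jacobian order. To handle this, I would compare dimensions. Jets of Jacobian order $>e$, at level $m$, have dimension at most $(m+1)\dim X - c(m)$, where $c(m)\to\infty$ only in codimension relative to liftable strata. So for $m$ large, the generic point of $\overline{p_m(C_i)}$ cannot lie in the closure of non-liftable jets unless it is itself non-liftable, which is a contradiction. I expect this step to require care, in particular the use of Greenberg's theorem uniformly in $m$ and de Fernex--Wang's stabilization argument.

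\textbf{Injectivity.} This follows from the local identification in the second step. If $\Psi_m(C_i)=\Psi_m(C_j)$, then generic arcs of $C_i$ and $C_j$ would share a dense open set of truncations. Lifting over the fibration would then make $C_i^\circ\cap C_j^\circ$ dense in both, which forces $i=j$.
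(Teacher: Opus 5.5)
The paper gives no proof of this statement; it is quoted from \cite[Theorem~4]{dFW}. Your reconstruction has a genuine gap, because its central intermediate claim is false.

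In the second step you assert that every $m$-jet near the generic point of $\overline{p_m(C_i)}$ with Jacobian order at most $e$ lifts to an arc. In the third step you set out to prove that $\overline{p_m(C_i)}$ is itself an irreducible component of $J_m^\Sigma(X)$. Neither holds. Greenberg/Denef--Loeser lifting only says that a \emph{lower} truncation of such a jet lifts (to level $m-e$ for a hypersurface, by Hensel--Tougeron). The Denef--Loeser fibration statement concerns the images $p_m(\operatorname{Arc}(X))$, not $J_m(X)$. For a hypersurface, the fibres of $J_{m+1}(X)\to J_m(X)$ over jets centered at a singular point are empty or all of $\mathbb A^{\dim X+1}$.

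Concretely, let $X=\{f=xy-z^2=0\}$ be the $A_1$ surface, with $\Sigma=\{0\}$ and $C=\operatorname{Arc}^0(X)$. On the dense open subset $x_1\ne0$ of $C$ one has $y=z^2/x$, so $\dim p_m(C)=2m$. But for every $\alpha\in C$ and $h\in\mathbb C^3$,
$f(\alpha+t^mh)\equiv t^m\bigl(y(t)h_1+x(t)h_2-2z(t)h_3\bigr)\equiv0\pmod{t^{m+1}}$.
So every $m$-jet whose $(m-1)$-truncation lies in $p_{m-1}(C)$ belongs to $J_m^0(X)$. These jets form an irreducible set of dimension $2(m-1)+3=2m+1$ containing $p_m(C)$. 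Hence $\overline{p_m(C)}$ is never a component. Moreover, $\Psi_m^0(C)$ consists generically of non-liftable jets with the \emph{same} Jacobian order $e=1$, for example $(t,ct^m,0)$ with $c\ne0$.

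So the obstruction is not ``degenerate jets of high Jacobian order'', and no dimension count can give the maximality you want. Your local identification, and the injectivity step built on it, fail for the same reason. The paper's own setting shows this too. By Proposition~\ref{prop:jet-component-dimension}, every component of $J_m^0(X_{N,M})$ has dimension $3m+1$. For $X_{3,3}$, the generic arcs are $(t^2X,tY,tZ,tW)$ with $Y^3=Z^3+W^3-tX^2$, so $\dim p_m(C)=3m-1$.

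What is needed is to argue one level down, and to aim only at the weaker statement actually asserted: exactly one component \emph{contains} $p_m(C)$. Let $\ell<m$ be the level given by the lifting lemma, with $m\gg0$ so that also $2\ell+1\ge m$.
\begin{itemize}
\item Let $D$ be a component of $J_m^\Sigma(X)$ containing $p_m(C)$. Its dense open subset of jets of Jacobian order at most $e$ is sent by $\tau_{m,\ell}^X$ into $p_\ell(\operatorname{Arc}^\Sigma(X))$. By irreducibility, it lands in $\overline{p_\ell(C')}$ for a single non-degenerate component $C'$.
\item The closure of a set of arcs is the intersection of the preimages of the closures of its truncations. Hence, for large level, closures of truncations of distinct non-degenerate components are mutually non-containing, which forces $C'=C$.
\item Over a dense open subset of $p_\ell(C)$, the $m$-jets of $X$ lying over a given $\ell$-jet form an affine-linear space of constant dimension. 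This is because the equations are linear in the top $m-\ell$ coefficients once $2\ell+1\ge m$. So $D$ is forced to be the closure of the set of such jets, which gives uniqueness.
\item Injectivity then follows by comparing the truncations $\tau_{m,\ell}^X(\Psi_m^\Sigma(C))$, not the $m$-jets of arcs.
\end{itemize}
Finally, Kolchin's theorem plays no role here. $\operatorname{Arc}^\Sigma(X)$ is usually reducible, and the finiteness of its non-degenerate components comes from Nash's resolution argument.
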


Therefore, let \(\Sigma=\operatorname{Sing}X\). The natural map\[\Psi_m^\Sigma:
\operatorname{Irr}(\operatorname{Arc}^{\Sigma}(X))
\longrightarrow
\operatorname{Irr}\bigl(J_m^{\Sigma}(X)\bigr)
\]
is defined on every irreducible component because every irreducible component of \(\operatorname{Arc}^{\Sigma}(X)\) is non-degenerate \cite[Remark~6]{dFW}.

\begin{theorem}[de Fernex--Wang {\cite[Theorem~34] {dFW}}]
\label{thm:dFW34}
Let \(X\) be a variety and \(x\in X\) an isolated hcDV singularity. Then, for all \(m\gg0\), the natural map \(\Psi_m^x\) is surjective, and hence bijective.

\end{theorem}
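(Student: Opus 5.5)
The plan is to compare dimensions. The lower bound on every irreducible component of \(J_m^x(X)\) comes for free because \(X\) is a hypersurface. The upper bound on the non-liftable jets is the genuine input from the hcDV hypothesis. Write \(n=\dim X\) and embed \((X,x)\) as \(\{f=0\}\subset(\mathbb A^{n+1},0)\).

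\textbf{Step 1: lower bound.} The scheme \(J_m^x(X)\) is cut out of \(J_m^0(\mathbb A^{n+1})\cong\mathbb A^{m(n+1)}\) by the \(m\) equations given by the coefficients of \(t^1,\ldots,t^m\) in \(f(\gamma(t))\); the constant term vanishes automatically. Hence every irreducible component of \(J_m^x(X)\) has dimension at least \(m(n+1)-m=mn\).

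\textbf{Step 2: reduction to an upper bound.} By Theorem~\ref{thm:dFW4} the map \(\Psi_m^x\) is already injective for \(m\gg0\). So it suffices to show that every component of \(J_m^x(X)\) contains a dense set of jets of the form \(p_m(\gamma)\) with \(\gamma\in\operatorname{Arc}^x(X)\). Such a component \(Z\) then meets \(p_m(C)\) densely for some component \(C\) of \(\operatorname{Arc}^x(X)\), and by the uniqueness clause of Theorem~\ref{thm:dFW4}, \(Z=\Psi_m^x(C)\). Thus it is enough to prove that the locus \(B_m\subset J_m^x(X)\) of jets not in \(\overline{p_m(\operatorname{Arc}^x(X))}\) has dimension strictly less than \(mn\).

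\textbf{Step 3: controlling the non-liftable locus.} By a Hensel--Tougeron (Newton lemma) argument, an \(m\)-jet \(\beta\) of \(X\) with \(\ord_t\bigl(\partial f/\partial x_i(\beta)\bigr)=e<m/2\) for some \(i\) is the truncation of an actual arc, after modifying only its coefficients of order \(>m-e\). So \(B_m\) is contained in the contact locus where \(\ord_\beta(\operatorname{Jac}_f)\ge m/2\). Since the singularity is isolated, \(\operatorname{Jac}_f\supseteq\m^{k}\) for some fixed \(k\), so the jets in \(B_m\) have \(\ord_t\beta\ge m/(2k)\). The required bound is then
\[
\dim\bigl\{\beta\in J_m^x(X):\ord_t\beta\ge c\,m\bigr\}<mn,\qquad c=1/(2k)>0.
\]
This is a statement about codimension of contact loci. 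It follows from a lower bound on the minimal log discrepancy of \(\m\) (equivalently, on log canonical thresholds) of the pair \((\mathbb A^{n+1},X)\) along \(x\). That lower bound is exactly the content of the hcDV condition: a general hyperplane section is again hcDV, down to a Du Val surface. I would prove it by induction on \(n\) via inversion of adjunction, starting from the explicit \(ADE\) computation on surfaces. The resulting estimate is roughly
\[
\operatorname{codim}\bigl(\{\ord_t\gamma\ge e,\ \ord_t f(\gamma)\ge m+1\},\ J_m(\mathbb A^{n+1})\bigr)\ge (n+1)e+(m+1)-\text{(bounded error)},
\]
so this locus has dimension about \(mn-ne+O(1)\), which is \(<mn\) once \(e\ge cm\) and \(m\gg0\).

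\textbf{Main obstacle.} The hard step is the codimension estimate in Step 3. For a merely canonical (e.g.\ cA-type) singularity the bounded error term can grow like \(e\), which would break the inequality. So this is precisely where the higher compound Du Val hypothesis must be used through the inductive mld bound. I would first try the cut-down by a general hyperplane together with the surface case computed from the minimal resolution of the Du Val singularity.
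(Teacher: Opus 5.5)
Your Step~2 replaces surjectivity of \(\Psi_m^x\) with a much stronger claim: that \(p_m(\operatorname{Arc}^x(X))\) is dense in \(J_m^x(X)\), i.e.\ \(B_m=\emptyset\). In the situation of the theorem this claim is false, so Step~3 is aiming at an impossible target.

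Since \(f\in\mathfrak m_x^2\), for \(\beta\in J^x_m(X)\) and \(\delta\in\mathbb C^{n+1}\) the term \(t^{m+1}\nabla f(\beta)\cdot\delta\) has order at least \(m+2\). Hence \(f(\beta+t^{m+1}\delta)\equiv f(\beta)\pmod{t^{m+2}}\), and the fibres of \(\tau_{m+1,m}\colon J^x_{m+1}(X)\to J^x_m(X)\) are either empty or all of \(\mathbb A^{n+1}\). For the same reason the coefficient of \(t\) in \(f(\gamma)\) also vanishes automatically, so the lower bound in your Step~1 is really \(mn+1\), not \(mn\).

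Every liftable \(m\)-jet lies in \(\tau_{m+1,m}(J^x_{m+1}(X))\), which therefore has dimension \(\dim J^x_{m+1}(X)-(n+1)\). For an isolated hcDV hypersurface, every component of \(J^x_k(X)\) with \(k\gg0\) has dimension \(nk+1\). This is the codimension-\(em+d\) statement that the paper extracts from the proof of \cite[Theorem~34]{dFW} in Proposition~\ref{prop:jet-component-dimension}. So the liftable jets have dimension at most \(nm\), while each component has dimension \(nm+1\). Consequently \(p_m(C)\) is never dense in \(\Psi^x_m(C)\), and \(B_m\) is open and dense, not of dimension \(<mn\).

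You can see this already for \(xy=z^2\). For jets \((ta,tb,tc)\) with \(a(0)\ne0\), membership in \(J_m\) only requires \(ab\equiv c^2\pmod{t^{m-1}}\), which leaves the top coefficient of \(b\) free. For arcs, \(b=c^2/a\) is fixed modulo \(t^m\). This gives dimension \(2m+1\) for the jets against \(2m\) for the liftable ones.

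The Newton-lemma step in Step~3 fails for the same reason. It only shows that the truncation \(\tau_{m,m-e}(\beta)\) lifts, not that \(\beta\) lies in the closure of the liftable jets. For \(m\) large relative to \(e\), whether \(\beta\in J_m(X)\) depends only on \(\tau_{m,m-e}(\beta)\). So over a liftable \((m-e)\)-jet, the fibre of \(\{\ord_\beta\operatorname{Jac}_f=e\}\) is a full \(\mathbb A^{(n+1)e}\), while the liftable \(m\)-jets over it form only an \(ne\)-dimensional family (Denef--Loeser). Non-liftable jets therefore occur at every Jacobian order \(e\ge1\), and \(B_m\not\subset\{\ord_\beta\operatorname{Jac}_f\ge m/2\}\).

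What you actually need is what the definition of \(\Psi_m^x\) asks for: every component \(Z\) of \(J^x_m(X)\) contains \(p_m(C)\) for some arc component \(C\), even though \(p_m(C)\) has positive codimension in \(Z\). The hcDV hypothesis has to be used to control the dimension of \emph{all} components of \(J^x_m(X)\), namely the exact codimension \(em+d\) of the corresponding cylinders in the ambient arc space, with \(e=\operatorname{ecodim}\mathcal O_{X,x}\). The matching with arc components must then go through this containment, not through density or through making a non-liftable locus small. Since \(e\) need not be \(1\), your hypersurface embedding is also an unjustified restriction of the statement. The paper itself does not reprove this theorem; it quotes it from \cite{dFW}.
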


We claim that cDV singularities are hcDV singularities.

\begin{lemma}
\label{lem:cDV-is-hcDV}
The isolated hypersurface cDV singularity \((X,\,0)\) is an hcDV singularity.
    
\end{lemma}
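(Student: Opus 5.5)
The plan is to check directly that \((X_{N,M},0)\) satisfies the definition of hcDV singularities in \cite{dFW}, using the cDV structure found in Lemma~\ref{lem:cDV}. As I recall it (this needs to be checked against \cite{dFW}), the definition is recursive:
\begin{itemize}
\item a normal surface germ is hcDV if it is Du Val;
\item a normal germ \((X,x)\) of dimension \(n\ge 3\) is hcDV if it is a hypersurface singularity, i.e.\ \(\operatorname{ecodim}\mathcal O_{X,x}=1\), and some hyperplane section \(H\ni x\) (a Cartier divisor cut out by one element of \(\mathfrak m_x\setminus\mathfrak m_x^2\)) has an hcDV singularity at \(x\).
\end{itemize}
In dimension three this reduces to the classical notion of cDV. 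So the proof is a verification of each clause for \(X=X_{N,M}\), with \(H=\{w=0\}\).

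First, the hypersurface condition. The germ \((X_{N,M},0)\) is cut out by \(f_{N,M}\in\mathbb C[x,y,z,w]\). Since \(N\ge3\) and \(M\ge N\), we have \(f_{N,M}\in\m^2\), where \(\m=(x,y,z,w)\). Hence
\[
\operatorname{edim}\mathcal O_{X,0}=4,\qquad \dim\mathcal O_{X,0}=3,\qquad \operatorname{ecodim}\mathcal O_{X,0}=1.
\]

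Second, normality. By Lemma~\ref{lem:cDV} the singularity is isolated, so \(\operatorname{codim}\operatorname{Sing}X\ge 3\). Moreover \(X\) is a hypersurface, hence Cohen--Macaulay and in particular \(S_2\). Serre's criterion \(R_1+S_2\) then shows that \(X\) is normal near \(0\).

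Third, the hyperplane section. The function \(w\) lies in \(\m\setminus\m^2\), and
\[
H=X\cap\{w=0\}=\{x^2+y^3-z^N=0\}\subset\mathbb A^3 .
\]
For \(N=3,4,5\) this is a Du Val singularity of type \(D_4\), \(E_6\), \(E_8\) respectively \cite{Reid}, exactly as used in Lemma~\ref{lem:cDV}. It is an isolated singularity (the gradient is \((2x,3y^2,-Nz^{N-1})\)), so \(H\) is a normal surface and therefore hcDV. The recursive definition then gives that \((X_{N,M},0)\) is hcDV.

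The main obstacle is matching the precise form of the definition in \cite{dFW}. If the definition requires a \emph{general} hyperplane section to be hcDV, rather than some section, I would argue as follows. The set of linear forms \(\ell\) for which \(\{\ell=0\}\cap X\) has a Du Val singularity is Zariski open; this follows from upper semicontinuity of Milnor number and multiplicity, and from the fact that an ADE singularity deforms only to ADE singularities. This open set contains \(\ell=w\), so it is nonempty, and hence a general section is Du Val. Alternatively, I would use that \cite{dFW} itself states that three-dimensional cDV singularities are the base case of their hcDV notion, and cite that directly. With either version, Theorem~\ref{thm:dFW34} applies to \(X_{N,M}\).
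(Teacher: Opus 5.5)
\textbf{There is a genuine gap: you verify a guessed definition of hcDV, not the one in \cite{dFW}.}

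In \cite{dFW}, hcDV is not defined recursively by ``some hyperplane section is hcDV''. As the paper's proof shows, the notion is checked through the embedding codimension $e=\operatorname{ecodim}\mathcal O_{X,0}$ together with minimal-log-discrepancy conditions at the point. If hcDV meant what you recall, then in dimension three the lemma would just restate Lemma~\ref{lem:cDV}. The paper would then have no reason to invoke \cite[Propositions~31 and~33]{dFW}.

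Your first step, $\operatorname{ecodim}\mathcal O_{X,0}=1$ because $f\in\m^2$, matches the paper and is needed. Your normality step is harmless. Your third step, exhibiting the Du Val section $\{w=0\}$, only re-proves that $(X_{N,M},0)$ is cDV. It does not verify the conditions that actually define hcDV.

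\textbf{The missing input is the mld bridge from cDV to hcDV.}
\begin{itemize}
\item By \cite[Proposition~31]{dFW}, a $d$-dimensional cDV point has $\operatorname{mld}_0(X)=d-1$.
\item Every divisor $E$ computing $\operatorname{mld}_0(X)$ satisfies $\ord_E(\m_0)=1$.
\item Every such $E$ also computes $\operatorname{mld}_0(X,(d-2)\{0\})$.
\item These conditions, with $e=1$, are the hypotheses of \cite[Proposition~33]{dFW}, which then gives hcDV.
\end{itemize}
This mld description is also what feeds the dimension count $\operatorname{codim}=em+d$ used in Proposition~\ref{prop:jet-component-dimension}.

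Your closing fallback (``cite that \cite{dFW} itself states that three-dimensional cDV singularities are the base case'') points in this direction. However, it does not identify the statement or its hypotheses, so the proof breaks exactly where hcDV is concluded. Your ``general hyperplane section'' patch does not help either, since it addresses a definition that is not the one in use.

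A smaller point: the lemma is stated for an arbitrary isolated hypersurface cDV germ in $\mathbb A^{d+1}$. Your argument uses the specific section $w=0$ of $X_{N,M}$, which suffices for the application but is narrower than the statement.
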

\begin{proof}

Suppose \(X=(f=0)\subset \mathbb{A}^{d+1} \), where \(f\in \mathbb{C}[x_{0},\,\cdots,\,x_{d}]\). The local ring is \[R:=\mathcal{O}_{X,\,0} =\cfrac{\mathbb{C}[x_{0},\,\cdots ,\,x_{d}]_{\mathfrak{m}}}{(f)},\]where \(\mathfrak{m}=(x_{0},\,\cdots,\,x_{d})\subset \mathbb{C}[x_{0},\,\cdots,\,x_{d}]\). Suppose \(\mathfrak{n}=\mathfrak{m}\mathbb{C}[x_{0},\,\cdots,\,x_{d}]_{\mathfrak{m}}/(f)\). Because \(0\) is a singular point, we get \[\cfrac{\partial f}{\partial x_{i}}(0)=0,\qquad \forall i.\]Therefore \(f \in \mathfrak{m}^2\) and \[\cfrac{\mathfrak{n}}{\mathfrak{n}^{2}}\simeq \cfrac{ \mathfrak{m}\mathbb{C}[x_{0},\,\cdots ,\,x_{d}]_{\mathfrak{m}}}{\mathfrak{m}^{2}\mathbb{C}[x_{0},\,\cdots ,\,x_{d}]_{\mathfrak{m}}+(f)}\simeq \cfrac{\mathfrak{m}}{\mathfrak{m}^{2}}.\]We get \[\operatorname{edim}R=\dim _{\mathbb{C}}\mathfrak{m}/\mathfrak{m}^{2}=d+1.\]Therefore \(\operatorname{ecodim}R=\operatorname{edim}R- \dim R = 1\). 
From \cite[Proposition~31]{dFW} the  cDV assumption gives that
\[
\operatorname{mld}_0(X)=d-1
\]
and for every divisor \(E\) over \(X\) computing \(\operatorname{mld}_0    (X)\) we get \(\operatorname{ord}_E(\m_0)=1\) and \(E\) computes \(\operatorname{mld}_0(X,\,(d-2)\{0\})\). Setting \(e=1\) in \cite[Proposition~~33]{dFW}, we get that it is an hcDV singularity.
    
\end{proof}

Therefore, from Lemmas~\ref{lem:cDV} and~\ref{lem:cDV-is-hcDV}, \(X_{N,\,M}\) is an isolated hypersurface hcDV singularity. We can get that

\begin{proposition}[cf. de Fernex--Wang {\cite[Theorem~34]{dFW}}]
\label{prop:jet-component-dimension}
Let \((X,\,0)\) be an isolated three-dimensional hypersurface cDV singularity. Then for \( m\gg 0\) the natural map \[\operatorname{Irr}(\operatorname{Arc}^0(X))\longrightarrow \operatorname{Irr}(J_m^0(X))\]is a bijection and every \(C \in\operatorname{Irr}(J_m^0(X))\) has dimension
\[
\dim C=3m+1.
\]
\end{proposition}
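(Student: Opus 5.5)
The bijection part follows directly from the results already recorded. By Lemma~\ref{lem:cDV-is-hcDV}, the germ $(X,0)$ is an isolated hcDV singularity. By \cite[Remark~6]{dFW}, every irreducible component of $\operatorname{Arc}^0(X)$ is non-degenerate, since the singularity is isolated and the only degenerate arc centered at $0$ is the constant one. Theorem~\ref{thm:dFW4}, applied with $\Sigma=\{0\}$, then gives injectivity of $\Psi_m^0$ for $m\gg0$. Theorem~\ref{thm:dFW34} gives surjectivity for $m\gg0$. Taking $m$ large enough for both statements yields the bijection
\[
\operatorname{Irr}(\operatorname{Arc}^0(X))\to\operatorname{Irr}(J_m^0(X)).
\]

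For the dimension statement, I would use the fact that each $C\in\operatorname{Irr}(J_m^0(X))$ is the closure of $p_m(\mathcal C)$ for a unique component $\mathcal C\subset\operatorname{Arc}^0(X)$. This identification is available by the "more precisely" clause of Theorem~\ref{thm:dFW4} together with surjectivity. Since $X$ is a normal hypersurface, hence lci, each $\mathcal C$ is a fat component corresponding to a divisorial valuation $\ord_E$. Its truncation satisfies
\[
\dim \overline{p_m(\mathcal C)} = (m+1)\cdot 3 - \operatorname{codim}(\mathcal C),
\]
where the codimension is computed by the Ein--Mustaţă formula \cite{EM}, namely $\operatorname{codim}(C_E)=k_E+1$ for the maximal divisorial set of $E$ with the appropriate Mather/Jacobian discrepancy. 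Thus $\dim C = 3m+1$ is equivalent to $\operatorname{codim}(\mathcal C)=2$ in $\operatorname{Arc}(X)$, that is, to $\mathcal C$ coming from a divisor $E$ with $\widehat k_E - \ord_E(\mathrm{Jac}_X) = 1$, or equivalently discrepancy $a_E(X)=1$ for lci $X$ with $\ord_E(\m_0)=1$.

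This is exactly what the hcDV analysis of de Fernex--Wang provides. In the proof of \cite[Theorem~34]{dFW}, which relies on \cite[Propositions~31 and~33]{dFW} as already invoked in Lemma~\ref{lem:cDV-is-hcDV}, the components of $J_m^0(X)$ for $m\gg0$ are shown to be the closures of images of the maximal divisorial sets of the divisors computing $\operatorname{mld}_0(X)=d-1=2$. These all have $\ord_E(\m_0)=1$. Hence
\[
\operatorname{codim} = \operatorname{mld}_0(X)=2 .
\]
As a cross-check, I would also argue directly. Since $J_m^0(X)$ is cut out in $J_m^0(\mathbb A^4)\cong\mathbb A^{4m}$ by the $m$ equations given by the coefficients of $t^1,\dots,t^m$ of $f(\gamma)$, every component has dimension at least $4m-m=3m$. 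The sharper value $3m+1$ comes from $\ord_t f(\gamma)\ge 2$ being automatic for $\gamma(0)=0$, because $f\in\m^2$. This reduces the count to $m-1$ effective equations and gives the lower bound $\dim C\ge 4m-(m-1)=3m+1$. The matching upper bound is $\operatorname{codim}\ge\operatorname{mld}_0=2$ from \cite{EM}.

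The main obstacle is extracting precisely the equality $\operatorname{codim}=\operatorname{mld}_0$ for every component, not just the minimal one. That is, one must rule out components of $J_m^0(X)$ of smaller dimension. I would handle this by combining the lower bound $3m+1$ above, which holds for every component, with the upper bound coming from $\operatorname{mld}_0(X)=2$ via the Ein--Mustaţă codimension formula. Together these force equality.
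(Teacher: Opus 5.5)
Your bijection argument matches the paper's. For the dimension formula your route is different. The paper quotes from the proof of \cite[Theorem~34]{dFW} that the cylinder over each component $C$ has codimension $em+d=m+3$ in $\operatorname{Arc}(\mathbb A^4)$, and then uses $\dim C=4(m+1)-(m+3)$.

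Your closing argument instead gets the equality from two independent bounds. For the lower bound: since $f\in\mathfrak m^2$, the coefficients of $t^0,t^1$ in $f(\gamma)$ vanish identically on $J_m^0(\mathbb A^4)\cong\mathbb A^{4m}$. So $J_m^0(X)$ is cut out by $m-1$ equations, and Krull's height theorem gives $\dim C\ge 3m+1$ for every component. For the upper bound: the jet-scheme characterization of minimal log discrepancies for normal lci varieties (Ein--Musta\c{t}\u{a}--Yasuda combined with Ein--Musta\c{t}\u{a} inversion of adjunction) gives $\dim J_m^0(X)\le 3(m+1)-\operatorname{mld}_0(X)=3m+1$.

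This is correct. It holds for every $m\ge1$ and does not use the bijection at all. The paper's route relies on an internal step of dFW's proof, but that step also covers hcDV points of higher embedding codimension, where your equation count is unavailable. State the upper bound in exactly this global form: it bounds the dimension of all of $J_m^0(X)$, which is what you need. It is not the codimension formula for individual arc families.

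You should drop your first strand, though. The ``more precisely'' clause of Theorem~\ref{thm:dFW4} says only that $\Psi_m(\mathcal C)$ is the unique component containing $p_m(\mathcal C)$. It does not say that $p_m(\mathcal C)$ is dense in that component. So your reduction ``$\dim C=3m+1$ iff $\operatorname{codim}(\mathcal C)=2$'' is unjustified: the codimension formula for maximal divisorial sets controls only $\dim\overline{p_m(\mathcal C)}$, which could a priori be smaller than $\dim C$. Your description of what the proof of \cite[Theorem~34]{dFW} establishes is likewise an unverified attribution. Neither is needed once the two-sided bound is in place.
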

\begin{proof}

It remains to prove the dimension formula. Let \(A=\mathbb{A}^4_{\mathbb{C}}\),
and set \(d=\dim X=3\) and \(e=\operatorname{ecodim}\mathcal{O}_{X,\,0}=1\).
Let \(V\subset\operatorname{Arc}(A)\) be the cylinder over \(C\).
The proof of \cite[Theorem~34]{dFW} gives
\[
\operatorname{codim}(V,\,\operatorname{Arc}(A))=em+d=m+3.
\]
Moreover,
\[
\operatorname{codim}(C,\,J_m(A))=\dim J_m(A)-\dim C=4(m+1)-\dim C.
\]
Therefore,
\[
\dim C=4(m+1)-\operatorname{codim}(C,\,J_m(A))
=4(m+1)-\operatorname{codim}(V,\,\operatorname{Arc}(A))=3m+1.
\]

\end{proof}

\subsection{Exact-order strata}
For an arc \(\gamma=(x,\,y,\,z,\,w)\) we define
\[\operatorname{ord}\gamma:=(\operatorname{ord}x,\,\operatorname{ord}y,\,\operatorname{ord}z,\,\operatorname{ord}w).\]
For \(x=\sum \limits_{i=0}^{m}a_{i}t^{i}\in \mathbb{C}[[t]]/(t^{m+1})\) we define \[\operatorname{ord}_{m}x=\begin{cases}
\min\{i\mid a_{i}\ne 0\},\quad & x\not\equiv 0\pmod{ t^{m+1}},\\ 
m+1,\quad & x \equiv 0\pmod{ t^{m+1}}.
\end{cases}\]and for \(\beta=(x,y,z,w)\in J_m^0(X_{N,\,M})\) we define 
\[\operatorname{ord}_{m}\beta:=(\operatorname{ord}_{m}x,\, \operatorname{ord}_{m}y,\,\operatorname{ord}_{m}z,\,\operatorname{ord}_{m}w).\]

For a weight vector \(v=(a,b,c,d)\in\mathbb Z_{>0}^{4}\) we define
\[
q(v)=\min\{2a,3b,Nc,Md\}
\]
and
\[
\delta(v)=a+b+c+d-q(v).
\]
We denote the $v$-initial form of \(f_{N,\,M}\) by \(\operatorname{in}_v(f_{N,\,M})\), which is the sum of all monomials of
\[
f_{N,\,M}=x^2+y^3-z^N-w^M
\]
whose \(v\)-weight is equal to \(q(v)\).

For \(v\in\{1,\ldots,m+1\}^{4}\) define the exact-order stratum
\[
T_{m,\,v}
=
\left\{
\beta\in J_m^0(X_{N,\,M})
\mid
\operatorname{ord}_m(\beta)=v
\right\}.
\]
We also define
\[
H_v
=
V\bigl(
\operatorname{in}_v(f_{N,\,M})
(X_0,Y_0,Z_0,W_0)
\bigr)
\cap D(X_0Y_0Z_0W_0)
\subset\mathbb A^4.
\]

\subsection{Dimension and irreducible components of the order strata}
\label{subsec:dimension}

\begin{proposition}
\label{prop:general-arc-in-jet-schemes}
Given \(m\gg0\), for every \(C\in\operatorname{Irr}(J_m^0(X_{N,\,M}))\)
there is a nonempty Zariski open subset \(U\subset C\) such that
\[q(\operatorname{ord}_m\beta)\le m,\qquad \forall\beta\in U.\]
    
\end{proposition}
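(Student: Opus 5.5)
We argue by contradiction and dimension counting, using Proposition~\ref{prop:jet-component-dimension}. The function $\beta\mapsto\operatorname{ord}_m\beta$ takes finitely many values on $J_m^0(X_{N,\,M})$. Each locus where $\operatorname{ord}_m$ of a coordinate is at least a given value is Zariski closed. Hence each stratum $T_{m,v}$ is locally closed, and $C$ is the finite union of the constructible sets $C\cap T_{m,v}$. Since $C$ is irreducible, there is a unique vector $v_C$ such that $C\cap T_{m,v_C}$ contains a nonempty open subset $U$ of $C$; concretely, $v_C$ is the componentwise generic value of $\operatorname{ord}_m$ on $C$. It therefore suffices to show $q(v_C)\le m$. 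Suppose instead that $q(v_C)\ge m+1$. Then $2a,3b,Nc,Md\ge m+1$ for $v_C=(a,b,c,d)$, and we will bound the dimension of the closed set
\[
B:=\{\beta\in J_m(\mathbb A^4)\mid \operatorname{ord}_m x\ge a,\ \operatorname{ord}_m y\ge b,\ \operatorname{ord}_m z\ge c,\ \operatorname{ord}_m w\ge d\}.
\]

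The key observation is that every jet in $B$ automatically lies in $J_m^0(X_{N,\,M})$. Indeed, each monomial $x^2,y^3,z^N,w^M$ then has order at least $\min\{2a,3b,Nc,Md\}\ge m+1$, so $f_{N,\,M}(\beta)\equiv0 \pmod{t^{m+1}}$. The vanishing of the constant terms gives the centering condition, since $a,b,c,d\ge1$. Thus $B\subset J_m^0(X_{N,\,M})$ is an affine space containing $U$, and therefore containing $C$, because $B$ is closed and $U$ is dense in $C$. We have
\[
\dim B=(m+1-a)+(m+1-b)+(m+1-c)+(m+1-d)=4(m+1)-(a+b+c+d).
\]
As $B$ is irreducible and contained in $J_m^0(X_{N,\,M})$, the maximality of $C$ as an irreducible component forces $C=B$. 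Proposition~\ref{prop:jet-component-dimension} then gives $3m+1=4m+4-(a+b+c+d)$, that is, $a+b+c+d=m+3$.

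The main step, and the one needing care, is to show this equality is incompatible with $q(v_C)\ge m+1$ and the minimality of the orders. Minimality means that $B$ should be the whole box, which forces $a=\lceil (m+1)/2\rceil$, $b=\lceil (m+1)/3\rceil$, $c=\lceil (m+1)/N\rceil$ and $d=\lceil (m+1)/M\rceil$. If any coordinate could be lowered while keeping $q\ge m+1$, the larger box would still lie in $J_m^0$, contradicting maximality. Then
\[
a+b+c+d\le (m+1)\Bigl(\tfrac12+\tfrac13+\tfrac1N+\tfrac1M\Bigr)+4.
\]
For $N\ge3$ and $M\ge N$ the bracket is at most $\tfrac12+\tfrac13+\tfrac13+\tfrac13=\tfrac32$. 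This is the wrong direction, so we instead use the dimension inequality itself: $C=B$ requires $\dim B=3m+1$, i.e.\ $a+b+c+d=m+3$, and for a genuine contradiction we need $a+b+c+d>m+3$.

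This is the real obstacle. A cleaner route is to choose a different comparison and use the canonical-threshold-type inequality that underlies the codimension formula in the proof of \cite[Theorem~34]{dFW}. There, cylinders of $X$ of ambient codimension $m+3$ must have contact order with $f$ equal to $\operatorname{ord}$ at most $m$ in the generic point. Equivalently, we would use the lower bound $\operatorname{codim}\ge m+3$ with strict inequality for the contact locus $\operatorname{Cont}^{\ge m+1}(f)\cap\operatorname{Cont}^{\ge1}(\m_0)$. This strictness comes from $\operatorname{mld}_0(X)=d-1$ being computed only by divisors with $\operatorname{ord}_E(\m_0)=1$, as in Lemma~\ref{lem:cDV-is-hcDV}. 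I would first try to make this explicit: since $B\subset(\tau_{\infty,m}^{\mathbb A^4})(\operatorname{Cont}^{\ge m+1}(f))$, the codimension of the box cylinder is at least $m+3+\epsilon$ for $m\gg0$. This follows from the inversion-of-adjunction estimate $\operatorname{codim}\operatorname{Cont}^{\ge m+1}(f)\cap\operatorname{Cont}^{\ge1}(\m_0)\ge (m+1)+\operatorname{mld}_0(\mathbb A^4,X)$-type bounds. It yields $\dim C<3m+1$, contradicting Proposition~\ref{prop:jet-component-dimension}.
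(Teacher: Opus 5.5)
There is a genuine gap at the point you yourself flag as ``the real obstacle.'' Everything up to $C\subset B$ is fine. You do not even need $C=B$: the inclusion $C\subset B$ together with $\dim C=3m+1$ already gives $a+b+c+d\le m+3$. You then try to contradict this with the \emph{upper} bound coming from the ceilings, see that it points the wrong way, and replace the missing step by an appeal to a ``canonical-threshold-type'' or ``inversion-of-adjunction'' estimate with an unspecified $\epsilon$.

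That last paragraph is not an argument. The strict inequality $\operatorname{codim}\ge m+3+\epsilon$ for the box cylinder is asserted, not derived. Nothing in Lemma~\ref{lem:cDV-is-hcDV} or Proposition~\ref{prop:jet-component-dimension} supplies it. Indeed, the full locus $\operatorname{Cont}^{\ge m+1}(f)\cap\operatorname{Cont}^{\ge1}(\m_0)$ is exactly the cylinder over $J^0_m(X_{N,\,M})$ and has codimension exactly $m+3$. So any strictness would have to be proved for your specific sub-cylinder, and you give no reason for it.

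The missing idea is elementary: use the \emph{lower} bounds that $q(v_C)\ge m+1$ gives directly. Since $2a\ge m+1$, $3b\ge m+1$, $Nc\ge m+1$ and $d\ge 1$, you get
\[
a+b+c+d\ \ge\ (m+1)\Bigl(\tfrac12+\tfrac13+\tfrac1N\Bigr)+1 .
\]
For $N=3,4,5$ one has $\tfrac12+\tfrac13+\tfrac1N-1=\tfrac16,\tfrac1{12},\tfrac1{30}>0$. This is where the Du Val condition enters; the $M$-term plays no role. Hence the right-hand side exceeds $m+3$ as soon as $m+1>6,12,30$ respectively, contradicting $a+b+c+d\le m+3$.

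With this inserted, your argument is essentially the paper's. The paper observes that every stratum $T_{m,v}$ with $q(v)>m$ is the full order box in $J_m(\mathbb A^4)$, of dimension
\[
4m-(a+b+c+d-4)\le 4m-\bigl([m/2]+[m/3]+[m/N]\bigr)\le 3m .
\]
So the closure of their finite union cannot contain a component of dimension $3m+1$, and $U$ is taken to be its complement in $C$. Your generic-order-vector formulation packages the same dimension count differently.
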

\begin{proof}
Let
\[
v=(a,\,b,\,c,\,d)\in\{1,\,\cdots,\,m+1\}^{4}
\]
and suppose that
\[
q(v)>m.
\]
We first compute \(\dim T_{m,\,v}\).

Suppose that \(v \in \{1,\,\cdots,\, m\}^4\). Since \(q(v)>m\), for every
\[
\beta=(x,\,y,\,z,\,w):=
\left(
\sum\limits_{i=1}^{m}x_{i}t^{i},\,
\sum\limits_{i=1}^{m}y_{i}t^{i},\,
\sum\limits_{i=1}^{m}z_{i}t^{i},\,
\sum\limits_{i=1}^{m}w_{i}t^{i}
\right)
\in J_{m}^{0}(\mathbb{A}^{4})
\]
satisfying \(\operatorname{ord}_{m}\beta=v\), we have
\[
x^{2}\equiv y^{3}\equiv z^{N}\equiv w^{M}
\equiv0\pmod{t^{m+1}}.
\]
Thus
\[
f_{N,\,M}(\beta)\equiv0\pmod{t^{m+1}}
\]
holds automatically. Therefore,
\[
\begin{aligned}
T_{m,\,v}
&=
\left\{
\beta\in J_{m}^{0}(\mathbb{A}^{4})
=\mathbb{A}^{4m}
\mid
\operatorname{ord}_{m}\beta=v
\right\}\\
&=
Z(x_{1},\,\cdots,\,x_{a-1},\,\cdots,\,
w_{1},\,\cdots,\,w_{d-1})
\cap D(x_{a}y_{b}z_{c}w_{d})
\subset\mathbb{A}^{4m},
\end{aligned}
\]
hence
\[
\dim T_{m,\,v}=
4m-(a+b+c+d-4).
\]
When one of the components of \(v\) is \(m+1\), the above argument applies in the same way, and we can get the same expression for \(\dim T_{m,\,v}\). For example, if \(d=m+1\) and \(a,\,b,\,c\le m\), we have \[T_{m,\,v}=Z(x_1,\,\cdots,\, x_{a-1},\,\cdots,\, z_1,\,\cdots,\, z_{c-1})\cap  D(x_ay_bz_c   )\subset \mathbb A^{3m},\]hence \[\dim T_{m,\,v }= 3m-(a+b+c-3)=4m-(a+b+c+d-4).\]

Since \(q(v)>m\), we have
\[
a-1\geq
\left[\cfrac{m}{2}\right],
\qquad
b-1\geq
\left[\cfrac{m}{3}\right],
\qquad
c-1\geq
\left[\cfrac{m}{N}\right].
\]
Therefore,
\[
\dim T_{m,\,v}
\leq
4m-(a+b+c-3)
\leq
4m-
\left(
\left[\cfrac{m}{2}\right]
+
\left[\cfrac{m}{3}\right]
+
\left[\cfrac{m}{N}\right]
\right)
\leq3m.
\]

Define
\[
B_{m}:=
\bigcup\limits_{
v\in\{1,\,\cdots,\,m+1\}^{4},\,
q(v)>m
}
T_{m,\,v}
\subset J_{m}^{0}(X_{N,\,M}).
\]
Since this is a finite union, we have
\[
\dim B_{m}
=
\dim
\overline{B_{m}}^{J_{m}^{0}(X_{N,\,M})}
\leq3m.
\]

Let \(C\in\operatorname{Irr}(J_{m}^{0}(X_{N,\,M}))\).  Since \(\dim C=3m+1\), it is impossible that
\[
C\subset
\overline{B_{m}}^{J_{m}^{0}(X_{N,\,M})}.
\]
Therefore,
\[
U=
C\setminus
\overline{B_{m}}^{J_{m}^{0}(X_{N,\,M})}
\subset C
\]
is a nonempty Zariski open subset. This open subset \(U\) satisfies the required condition.
\end{proof}

\begin{proposition}
\label{prop:dim-of-stratum-q<m}
Given \(m\) and \(v=(a,\,b,\,c,\,d)\in \{1,\,\cdots,\,m+1\}^{4}\) satisfying \(q(v)\le m\), if  \(T_{m,\,v}\ne \emptyset\), then for every \(P\in \operatorname{Irr}(T_{m,\,v})\), we have
\[
\dim P=3m-\delta(v)+3.
\]
\end{proposition}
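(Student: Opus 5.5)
The plan is to parametrize $T_{m,\,v}$ by the "unit parts" of the four coordinate series. We show that the single condition $f_{N,\,M}(\beta)\equiv0 \pmod{t^{m+1}}$ becomes a triangular system of $m-q(v)+1$ equations, each of which can be solved for one new coefficient. This exhibits $T_{m,\,v}$, locally over an open cover, as (a smooth threefold) $\times$ (affine space). The dimension then follows from a parameter count.

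\textbf{Step 1: parametrization.} Write $v=(a,b,c,d)$ and $q=q(v)\le m$. First suppose $a,b,c,d\le m$. A jet $\beta=(x,y,z,w)$ with $\operatorname{ord}_m\beta=v$ is written uniquely as
\[
x=t^a\tilde x,\quad y=t^b\tilde y,\quad z=t^c\tilde z,\quad w=t^d\tilde w,
\]
where $\tilde x=\sum_{j=0}^{m-a}X_jt^j$ with $X_0\ne0$, and similarly for the other series with coefficients $Y_j,Z_j,W_j$. Thus the ambient stratum $\{\operatorname{ord}_m\beta=v\}\subset J_m^0(\mathbb A^4)$ is $D(X_0Y_0Z_0W_0)$ inside an affine space of dimension $4m+4-(a+b+c+d)$. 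Each monomial of $f_{N,\,M}$ has $v$-weight at least $q$, so
\[
f_{N,\,M}(\beta)=t^q g,
\qquad
g=\operatorname{in}_v(f_{N,\,M})(\tilde x,\tilde y,\tilde z,\tilde w)+\sum t^{\,\mathrm{wt}-q}(\text{monomial in }\tilde x,\ldots,\tilde w).
\]
The condition $\beta\in J_m(X_{N,\,M})$ becomes $g_k=0$ for $0\le k\le m-q$.

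We check that $g_k$ only involves coefficients that exist. A monomial such as $x^2$ of weight $2a\ge q$ contributes to $g_k$ only through $X_j$ with $j\le k-(2a-q)\le m-2a\le m-a$. The same holds for the other monomials.

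\textbf{Step 2: triangular structure.} For $k\ge1$ we have
\[
g_k=\nabla(\operatorname{in}_vf_{N,\,M})(X_0,Y_0,Z_0,W_0)\cdot(X_k,Y_k,Z_k,W_k)+(\text{polynomial in coefficients of index}<k),
\]
and $g_0=\operatorname{in}_vf_{N,\,M}(X_0,Y_0,Z_0,W_0)$. The initial form is a signed sum of pure powers of distinct variables. If it is a single monomial, then $H_v=\emptyset$ and hence $T_{m,\,v}=\emptyset$. Otherwise, on the torus each partial derivative of a monomial appearing in it (for instance $2X_0$) is nonzero. Hence $H_v=\{g_0=0\}$ is a smooth hypersurface of pure dimension $3$ in the torus. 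It is covered by the opens $D(\partial_i\operatorname{in}_vf_{N,\,M})$, where $i$ ranges over the variables appearing in the initial form.

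On such an open set, each equation $g_k=0$ for $1\le k\le m-q$ is solved uniquely for the $k$-th coefficient of the $i$-th variable. That coefficient exists: if the variable is $x$ with $2a=q$, then $k\le m-q\le m-a$, and similarly for the others. Therefore $T_{m,\,v}$ is locally isomorphic to $(\text{open of }H_v)\times\mathbb A^{r}$, where
\[
r=4m+4-(a+b+c+d)-4-(m-q).
\]
Consequently every irreducible component has dimension
\[
3+r=3m+3-(a+b+c+d)+q=3m-\delta(v)+3.
\]

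\textbf{Step 3: the boundary case.} Suppose some entry of $v$ equals $m+1$, say $d=m+1$. Then $w\equiv0$ and there are no $W_j$. Since $Md>m\ge q$, the variable $w$ does not appear in the initial form. The same argument applies with $H_v$ replaced by a surface in the $3$-torus $D(X_0Y_0Z_0)$. The count becomes
\[
2+\bigl(3m+3-(a+b+c)-3-(m-q)\bigr)=3m+3-(a+b+c+d)+q,
\]
which is the same formula. The case of several entries equal to $m+1$ is handled identically.

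The main point needing care is Step 2: verifying that the solved-for coefficient always lies within the truncation range, and that the lower-order corrections never involve the $k$-th coefficients. The weight inequalities checked above are exactly what is needed for both.
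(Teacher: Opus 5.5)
Your proof is correct. It shares the paper's key input, the triangular shape $F_j=D_\xi U_j+G_j$ of the coefficient equations with $D_\xi=\varepsilon pU_0^{p-1}$ a unit on the torus, but you extract the dimension from it differently.

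The paper uses the triangular structure only infinitesimally. It gives a full-rank Jacobian, hence $\operatorname{edim}=K-(m-q+1)$ at every closed point, and Krull's height theorem supplies the reverse inequality. So every local ring of $T_{m,\,v}$ is regular of that dimension, and equidimensionality and the disjointness of components (Corollary~\ref{cor:distinct-irr-are-disjoint}) come out together.

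You use it globally instead. You solve recursively for the higher coefficients of one variable to get $T_{m,\,v}\cong H_v\times\mathbb A^r$, and then need only that a nonempty hypersurface in a torus is pure of codimension one. That isomorphism is exactly the content of the paper's next result, Proposition~\ref{prop:Tmv-and-Hv}. The paper proves it separately and only for $v\in\{1,\ldots,m\}^4$. Your version also covers the boundary strata, so it merges the two propositions and gives the bijection with $\operatorname{Irr}(H_v)$ at no extra cost. The smoothness of $H_v$, which you note, also recovers the disjointness corollary, since $H_v\times\mathbb A^r$ is then smooth.

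One simplification: your open cover is unnecessary. Each $\partial_i\operatorname{in}_v(f_{N,\,M})=\varepsilon_ip_iU_{i,0}^{p_i-1}$ is a unit on the whole torus, so every open $D(\partial_i\operatorname{in}_v(f_{N,\,M}))$ is all of $H_v$. A single choice of variable therefore already gives a global product decomposition, as in the paper's fixed choice of $\xi$.
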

\begin{proof}
For every \(\beta=(x,\,y,\,z,\,w)\in T_{m,\,v}\), let the exponents be
\[
p_{x}=2,\,  p_{y}=3,\,p_{z}=N,\, p_{w}=M
\]
and let the coefficients be
\[
\varepsilon _x=\varepsilon _y=1,\, \varepsilon_{z}=\varepsilon_{w}=-1.
\]
Write \(v_x=a\), \(v_y=b\), \(v_z=c\), and \(v_w=d\), and set
\[
I(v):=\{u\in\{x,y,z,w\}:v_u\leq m\}.
\]
If \(u\notin I(v)\), then \(v_u=m+1\), so the corresponding
coordinate is identically zero modulo \(t^{m+1}\); its entire block of coefficient variables is omitted. For \(u\in I(v)\), write
\[
u(t)=t^{v_u}\sum_{r=0}^{m-v_u}U_{u,r}t^r,
\qquad U_{u,0}\ne0.
\]
The number of coefficient variables is
\[
K:=\sum_{u\in I(v)}(m-v_u+1)=4m-(a+b+c+d-4),
\]
because each omitted coordinate has \(m-v_u+1=0\).
When all four coordinates belong to \(I(v)\), we use the notation
\(U_{x,r}=X_r\), \(U_{y,r}=Y_r\), \(U_{z,r}=Z_r\), and
\(U_{w,r}=W_r\), so the expansions are
\[
\begin{array}
{ll}x= t^{a}(X_{0}+X_{1}t+\cdots +X_{m-a}t^{m-a}),\qquad & y= t^{b}(Y_{0}+Y_{1}t+\cdots +Y_{m-b}t^{m-b})\\ \\
z= t^{c}(Z_{0}+Z_{1}t+\cdots +Z_{m-c}t^{m-c}),\qquad & w= t^{d}(W_{0}+W_{1}t+\cdots +W_{m-d}t^{m-d})
\end{array}
\]
with \(X_{0}Y_{0}Z_{0}W_{0}\ne 0\).
For arbitrary \(I(v)\), put
\[
g_v:=\prod_{u\in I(v)}U_{u,0},
\qquad
R:=\mathbb C[U_{u,r}\mid u\in I(v),\ 0\leq r\leq m-v_u]_{g_v}.
\]
Thus \(R\) is the coordinate ring of the principal open subset
\(D(g_v)\subset\mathbb A^K\); only the leading coefficients of
the nonzero coordinates are inverted.
Let \(q=q(v)\), and write
\[
f_{N,\,M}(\beta)\equiv t^{q}(F_{0}+F_{1}t+\cdots +F_{m-q}t^{m-q})\pmod{ t^{m+1}}.
\]
Thus
\[
T_{m,\,v}=Z(F_{0},\,\cdots,\,F_{m-q})\cap D(g_v)\subset \mathbb{A}^{K}.
\]

For a weight vector \(\alpha=(\alpha_{x},\,\alpha_{y},\,\alpha_{z},\,\alpha_{w})\in \mathbb{Z}_{>0}^{4}\), we have
\[
\operatorname{in}_{\alpha}(f_{N,\,M})(x,\,y,\,z,\,w)=\sum \limits_{u\in \{x,\,y,\,z,\,w\},\,p_{u}\alpha_{u}=q(\alpha) }\varepsilon_{u}u^{p_{u}}.
\]
Every monomial \(\varepsilon_u u^{p_u}\) occurring in
\(\operatorname{in}_{v}(f_{N,\,M})\) satisfies
\(p_uv_u=q\leq m\), and hence \(u\in I(v)\).
In particular, a coordinate that is identically zero cannot supply
a monomial of minimum weight.
Choose a monomial \(\varepsilon\xi ^{p}\) in
\(\operatorname{in}_{v}(f_{N,\,M})\), and abbreviate
\(U_{\xi,r}\) by \(U_r\). Then
\[
\xi(t)=t^{h}(U_{0}+U_{1}t+\cdots +U_{m-h}t^{m-h}) ,\qquad h=\operatorname{ord}_{m}\xi,\; U_{0} \ne 0.
\]
Since \(q=ph>h\), we have \(m-q<m-h\), so all the variables
\(U_0,\ldots,U_{m-q}\) occur in this coefficient block. Set
\[
D_{\xi}:=\frac{\partial F_0}{\partial U_0}
=\varepsilon pU_0^{p-1}\in R^\times.
\]
Here \(U_0\) is invertible in \(R\), and the ground field has
characteristic zero. Expanding the coefficients gives
\[
F_0=\sum_{\substack{u\in I(v)\\p_uv_u=q}}
\varepsilon_u U_{u,0}^{p_u},
\qquad
F_j=D_{\xi}U_j+G_j\quad(1\leq j\leq m-q),
\]
where \(G_j\) is independent of \(U_j,U_{j+1},\ldots\).
Enumerate the \(K\) coefficient variables belonging to the
coordinates in \(I(v)\) as \(u_1,\ldots,u_K\). Consider
\[
\mathcal{J}_{\mathrm{full}}\vert_{\beta}=\left.\cfrac{\partial (F_{0},\,\cdots ,\,F_{m-q})}{\partial (u_1,\ldots,u_K)} \right\vert_{\beta}.
\]
It contains the submatrix
\[
\mathcal{J}_{\xi}\vert_{\beta}=\left.\cfrac{\partial (F_{0},\,\cdots ,\,F_{m-q})}{\partial (U_{0},\,\cdots ,\,U_{m-q})}\right\vert_{\beta}=\left.\begin{pmatrix}
D_{\xi}\\
* & D_{\xi} \\
* & * & D_{\xi} \\ 
\vdots  & \vdots  & \vdots  & \ddots \\
* & * & * & \cdots  & D_{\xi}
\end{pmatrix} \right\vert_{\beta}.
\]
Thus, \(\det \mathcal{J}_{\xi}\vert_{\beta}\ne 0\), and hence \(\operatorname{rank}(\mathcal{J}_{\mathrm{full}}\vert_{\beta})=m-q+1\). 

Let
\[
A=R/(F_{0},\,\cdots ,\,F_{m-q})
\]
be the coordinate ring of \(T_{m,\,v}\). 

For every closed point \(\beta \in T_{m,\,v}\), let \(\mathfrak{m}_{\beta}\subset R\) and \(\mathfrak{n}_{\beta}=\mathfrak{m}_{\beta}/(F_{0},\,\cdots,\,F_{m-q})\subset A\) be the maximal ideals corresponding to \(\beta\). Consider the local ring
\[
A_{\mathfrak{n}_{\beta}}=\cfrac{R_{\mathfrak{m}_{\beta}}}{(F_{0},\,\cdots ,\,F_{m-q})R_{\mathfrak{m}_{\beta}}}.
\]

We first prove that \(A_{\mathfrak{n}_{\beta}}\) is a regular local ring. The ring \(R_{\mathfrak{m}_{\beta}}\) is a regular local ring satisfying
\[
\dim _{\mathbb{C}} \cfrac{\mathfrak{m}_{\beta}R_{\mathfrak{m}_{\beta}}}{\mathfrak{m}_{\beta}^{2}R_{\mathfrak{m}_{\beta}}}=K.
\]
Moreover,
\[
\cfrac{\mathfrak{n}_{\beta}A_{\mathfrak{n}_{\beta}}}{\mathfrak{n}_{\beta}^{2}A_{\mathfrak{n}_{\beta}}} \simeq\cfrac{\mathfrak{m}_{\beta}R_{\mathfrak{m}_{\beta}}}{\mathfrak{m}_{\beta }^{2}R_{\mathfrak{m}_{\beta}}+(F_{0},\,\cdots ,\,F_{m-q})R_{\mathfrak{m}_{\beta}}}.
\]
We have
\[
F_{i}\equiv \sum \limits_{j=1}^{K}\cfrac{\partial F_{i}}{\partial u_{j}}(\beta)(u_{j}-u_{j}(\beta )) \pmod{ \mathfrak{m}_{\beta}^{2}},\qquad 0 \le i \le m-q.
\]
Since \(\operatorname{rank}(\mathcal{J}_{\mathrm{full}}\vert_{\beta})=m-q+1\), the matrix has full row rank. Let \(\overline{F_{i}}=F_{i}+\mathfrak{m}_{\beta}^{2}\). Then \(\{\overline{F_{0}},\,\cdots,\,\overline{F_{m-q}}\}\) is linearly independent in \(\cfrac{\mathfrak{m}_{\beta}R_{\mathfrak{m}_{\beta}}}{\mathfrak{m}_{\beta}^{2}R_{\mathfrak{m}_{\beta}}}\), and
\[
\operatorname{Span} _{\mathbb{C}} \{\overline{F_{0}},\,\cdots,\,\overline{F_{m-q}}\}=\cfrac{\mathfrak{m}_{\beta }^{2}R_{\mathfrak{m}_{\beta}}+(F_{0},\,\cdots ,\,F_{m-q})R_{\mathfrak{m}_{\beta}}}{\mathfrak{m}_{\beta}^{2}R_{\mathfrak{m}_{\beta}}}.
\]
Therefore,
\[
\cfrac{\mathfrak{n}_{\beta}A_{\mathfrak{n}_{\beta}}}{\mathfrak{n}_{\beta}^{2}A_{\mathfrak{n}_{\beta}}} \simeq\cfrac{\mathfrak{m}_{\beta}R_{\mathfrak{m}_{\beta}}}{\mathfrak{m}_{\beta }^{2}R_{\mathfrak{m}_{\beta}}+(F_{0},\,\cdots ,\,F_{m-q})R_{\mathfrak{m}_{\beta}}}\simeq \cfrac{\mathfrak{m}_{\beta}R_{\mathfrak{m}_{\beta}}/\mathfrak{m}_{\beta}^{2}R_{\mathfrak{m}_{\beta}}}{\operatorname{Span} _{\mathbb{C}} \{\overline{F_{0}},\,\cdots,\,\overline{F_{m-q}}\}}.
\]
Hence,
\[
\operatorname{edim}A_{\mathfrak{n}_{\beta}}= \dim _{\mathbb{C}}\cfrac{\mathfrak{n}_{\beta}A_{\mathfrak{n}_{\beta}}}{\mathfrak{n}_{\beta}^{2}A_{\mathfrak{n}_{\beta}}}  =K-(m-q+1).
\]

For a Noetherian local ring, we have
\[
\dim A_{\mathfrak{n}_{\beta}}\le \operatorname{edim}A_{\mathfrak{n}_{\beta}}=K-(m-q+1).
\]
By Krull's height theorem, if \(\mathfrak{m}_{\beta}\supset \mathfrak{p}\supset (F_{0},\,\cdots,\,F_{m-q})\) is a minimal prime ideal over \((F_0,\, \cdots,\, F_{m-q})\), then
\[
\mathrm{ht}(\mathfrak{p})\le m-q+1.
\]
There is a natural surjection \(A_{\mathfrak{n}_{\beta}}\twoheadrightarrow R_{\mathfrak{m}_{\beta}}/\mathfrak{p}R_{\mathfrak{m}_{\beta}}\). Therefore,
\[
K-(m-q+1)\le \dim R_{\mathfrak{m}_{\beta}}/\mathfrak{p}R_{\mathfrak{m}_{\beta}}\le \dim  A_{\mathfrak{n}_{\beta}}\le K-(m-q+1).
\]
Thus,
\[
\dim A_{\mathfrak{n}_{\beta}}=K-(m-q+1)=\operatorname{edim}A_{\mathfrak{n}_{\beta}}.
\]
Hence \(A_{\mathfrak{n}_{\beta}}\) is a regular local ring. In particular, \(A_{\mathfrak{n}_{\beta}}\) is an integral domain \cite[Tags~0BBZ, 00NN, and~00NP]{Stacks}.

Next, we prove that for every \(P ,\,P'\in \operatorname{Irr}(T_{m,\,v})\) with \(P \ne P'\), we have \(P\cap P'=\emptyset\). Suppose otherwise, and choose a closed point \(p\in P\cap P'\). Such a point exists because \(P\cap P'\) is a nonempty closed subset of a scheme of finite type. Let \(\mathfrak{n}_{p}\subset A\) be the maximal ideal corresponding to \(p\), and let \(\mathfrak{p},\,\mathfrak{p}'\subset A\) be distinct minimal prime ideals such that
\[
P=Z(\mathfrak{p}),\qquad P'=Z(\mathfrak{p}').
\]
Moreover, \(\mathfrak{p},\,\mathfrak{p}'\subset \mathfrak{n}_{p}\). Then \(A_{\mathfrak{n}_{p}}\) has two distinct minimal prime ideals \(\mathfrak{p}A_{\mathfrak{n}_{p}},\,\mathfrak{p}'A_{\mathfrak{n}_{p}}\), contradicting the fact that \(A_{\mathfrak{n}_{p}}\) is an integral domain.

Finally, we prove that every \(P\in \operatorname{Irr}(T_{m,\,v})\) satisfies \(\dim P=3m-\delta(v)+3\). Let \(\mathfrak{q}\subset A\) be the minimal prime ideal corresponding to \(P\). For every closed point \(\beta \in P\), since \(A_{\mathfrak{n}_{\beta}}\) is an integral domain, we have
\[
\mathfrak{q}A_{\mathfrak{n}_{\beta}}=0.
\]
Therefore,
\[
\mathcal{O}_{P,\,\beta}=(A/\mathfrak{q})_{\mathfrak{n}_{\beta}/\mathfrak{q}}\simeq \cfrac{A_{\mathfrak{n}_{\beta}}}{\mathfrak{q}A_{\mathfrak{n}_{\beta}}}=A_{\mathfrak{n}_{\beta}}.
\]
Hence,
\[
\dim P=\dim \mathcal{O}_{P,\,\beta}= K-(m-q+1)=3m-\delta(v)+3.
\]
This completes the proof.
\end{proof}
    
\begin{corollary}
\label{cor:distinct-irr-are-disjoint}
Let \(v=(a,b,c,d)\in\{1,\ldots,m+1\}^4\) satisfy \(q(v)\le m\). If \(P,P'\in\operatorname{Irr}(T_{m,v})\) are distinct,  then
\[P\cap P'=\emptyset.\]
    
\end{corollary}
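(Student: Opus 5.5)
The corollary is a direct extraction of the middle step in the proof of Proposition~\ref{prop:dim-of-stratum-q<m}, so the plan is to isolate that argument rather than prove anything new. Fix $v$ with $q(v)\le m$ and present $T_{m,v}$ as in that proof:
\[
T_{m,v}=\operatorname{Spec}A,\qquad A=R/(F_0,\ldots,F_{m-q}),
\]
where $R$ is the localization of the polynomial ring in the $K$ coefficient variables at $g_v$. The key input is the statement established there: for every closed point $\beta\in T_{m,v}$, the local ring $A_{\mathfrak n_\beta}$ is regular. This came from two facts. First, the triangular Jacobian block $\mathcal J_\xi$, with diagonal entries the unit $D_\xi$, gives $\operatorname{edim}A_{\mathfrak n_\beta}=K-(m-q+1)$. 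Second, Krull's height theorem gives the reverse inequality on $\dim A_{\mathfrak n_\beta}$. I will simply cite this.

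Next, suppose $P\ne P'$ are irreducible components with $P\cap P'\neq\emptyset$. Since $T_{m,v}$ is of finite type over $\mathbb C$, the nonempty closed set $P\cap P'$ contains a closed point $p$. The components $P,P'$ correspond to distinct minimal primes $\mathfrak p,\mathfrak p'$ of $A$, both contained in $\mathfrak n_p$. Localization preserves minimality and distinctness of primes contained in $\mathfrak n_p$. Hence $\mathfrak pA_{\mathfrak n_p}$ and $\mathfrak p'A_{\mathfrak n_p}$ are two distinct minimal primes of $A_{\mathfrak n_p}$.

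But $A_{\mathfrak n_p}$ is a regular local ring, hence an integral domain, and so it has a unique minimal prime, namely $0$. This is a contradiction, so $P\cap P'=\emptyset$.

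There is no real obstacle here. The only point needing care is that regularity holds at \emph{every} closed point of $T_{m,v}$, not merely a general one. This is guaranteed because $D_\xi=\varepsilon pU_0^{p-1}$ is a unit on all of $D(g_v)$.
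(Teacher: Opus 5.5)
Your proposal is correct and is essentially the paper's own argument. The paper also derives the corollary from the regularity of $A_{\mathfrak n_\beta}$ at every closed point, obtained from the edim count given by the triangular Jacobian block together with Krull's height theorem. It then gets the contradiction from the two distinct minimal primes $\mathfrak pA_{\mathfrak n_p}$ and $\mathfrak p'A_{\mathfrak n_p}$ in the domain $A_{\mathfrak n_p}$, exactly as you do.
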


This follows from the proof of Proposition~\ref{prop:dim-of-stratum-q<m}.

\begin{proposition}
\label{prop:Tmv-and-Hv}
Given \(m\) and \(v=(a,\,b,\,c,\,d)\in \{1,\,\cdots,\,m\}^{4}\) satisfying \(q(v)\le m\), if  \(T_{m,\,v}\ne \emptyset\), then there is a bijection
\[
\operatorname{Irr}(T_{m,\,v})\rightarrow  \operatorname{Irr}(H_{v}).
\]
\end{proposition}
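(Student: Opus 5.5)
The map I have in mind is the leading-coefficient morphism
\[
\pi:T_{m,v}\longrightarrow H_v,\qquad \beta\longmapsto (X_0,Y_0,Z_0,W_0),
\]
which is well defined because $F_0=\operatorname{in}_v(f_{N,M})(X_0,Y_0,Z_0,W_0)$ and $X_0Y_0Z_0W_0\ne0$ on $T_{m,v}$. Here all four coordinates lie in $I(v)$, since $v\in\{1,\ldots,m\}^4$. The plan is to show that $\pi$ is a trivial fibration with affine-space fibres, after an étale-local or Zariski-local change of coordinates, so that irreducible components correspond bijectively.

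First I would treat the remaining equations $F_1,\ldots,F_{m-q}$ as in the proof of Proposition~\ref{prop:dim-of-stratum-q<m}. I fix a monomial $\varepsilon\xi^p$ of $\operatorname{in}_v(f_{N,M})$ and use the triangular structure $F_j=D_\xi U_j+G_j$, where $D_\xi=\varepsilon pU_0^{p-1}$ is a unit and $G_j$ involves only $U_0,\ldots,U_{j-1}$ together with the other variables. Then, on the open set $H_v^{(\xi)}=H_v\cap D(\xi_0)$ (which is all of $H_v$, since all leading coefficients are inverted), I solve recursively $U_j=-G_j/D_\xi$ for $1\le j\le m-q$. This shows that $T_{m,v}\cong H_v\times\mathbb A^{K-4-(m-q)}$, where the free coordinates are all coefficient variables other than the four leading ones and $U_1,\ldots,U_{m-q}$. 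The isomorphism is given by regular maps in both directions over $R$, because only $U_0^{-1}$ is needed. The one point to check is that $G_j$ never involves $U_k$ for $k\ge j$; this is exactly the triangularity already established, and the relation $m-q<m-h$ guarantees that these variables exist.

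From the product decomposition, irreducible components of $T_{m,v}$ are exactly $P'\times\mathbb A^{r}$ for $P'\in\operatorname{Irr}(H_v)$, since a product of an irreducible variety with affine space is irreducible over $\mathbb C$ and the decomposition is preserved. The map $P\mapsto\overline{\pi(P)}=\pi(P)$ is then the desired bijection. Nonemptiness of $T_{m,v}$ is equivalent to nonemptiness of $H_v$, and it plays no further role.

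The main obstacle I expect is ensuring that the recursive elimination is global over $H_v$ and not just local, that is, that a single choice of $\xi$ works everywhere. This holds because every $U_{u,0}$ is invertible on $D(X_0Y_0Z_0W_0)$, so any monomial in the initial form gives a unit $D_\xi$ on all of $H_v$. I would also double-check the variable count, $K-(m-q+1)=\dim H_v+(K-4-(m-q))$ with $\dim H_v=3$, which is consistent with Proposition~\ref{prop:dim-of-stratum-q<m}.
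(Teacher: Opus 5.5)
Your proposal is correct and is essentially the paper's argument. The paper likewise uses the triangular relations $F_j=D_\xi U_j+G_j$, with $D_\xi=\varepsilon pU_0^{p-1}$ a unit, to obtain $A\simeq B[V_1,\ldots,V_{K-4-(m-q)}]$, i.e.\ $T_{m,v}\simeq H_v\times\mathbb A^{K-4-(m-q)}$. It then matches irreducible components by contracting and extending minimal primes, which is the algebraic form of your correspondence $P'\mapsto P'\times\mathbb A^{r}$; your hedge about an \'etale-local change of coordinates is unnecessary, since, as you yourself note, the elimination is global on $D(X_0Y_0Z_0W_0)$.
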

\begin{proof}
We continue to use the notation of Proposition~\ref{prop:dim-of-stratum-q<m}.
Here \(v\in\{1,\ldots,m\}^4\), so all four coordinates belong to
\(I(v)\) and no coefficient block is omitted.

Let
\[
A=\cfrac{R}{(F_{0},\,F_{1},\,\cdots,\,F_{m-q})}
\]
be the coordinate ring of \(T_{m,\,v}\), and let
\[
B=\cfrac{\mathbb{C}[X_{0},\,Y_{0},\,Z_{0},\,W_{0}]_{X_{0}Y_{0}Z_{0}W_{0}}}{(\operatorname{in}_{v}(f_{N,\,M})(X_{0},\,Y_{0},\,Z_{0},\,W_{0}))}=\cfrac{\mathbb{C}[X_{0}^{\pm 1},\,Y_{0}^{\pm 1},\,Z_{0}^{\pm 1},\,W_{0}^{\pm 1}]}{(\operatorname{in}_{v}(f_{N,\,M}))}
\]
be the coordinate ring of \(H_{v}\).

We claim that
\[
A\simeq B[V_{1},\,\cdots ,\,V_{K-4-(m-q)}],
\]
where
\[
\{V_{1},\,\cdots,\,V_{K-4-(m-q)}\}=\{X_{1},\,\cdots,\,X_{m-a},\,\cdots,\,W_{1},\,\cdots,\,W_{m-d}\}\setminus \{U_{1},\,\cdots,\,U_{m-q}\}.
\]
Since we have already obtained
\[
\begin{array}
{l}F_{0}=\operatorname{in}_{v}(f_{N,\,M})(X_{0},\,Y_{0},\,Z_{0},\,W_{0}) \\ \\
F_{j}=D_{\xi}U_{j}+G_{j},\qquad\qquad 1\le j \le m-q,
\end{array}
\]
where \(G_{j}\) is independent of \(U_{j},\,U_{j+1},\,\cdots\), and \(D_{\xi}=\varepsilon pU_{0}^{p-1}\in B^{\times}\), the isomorphism is therefore clear.

Since there are mutually inverse bijections between the minimal prime ideals
\[
\begin{array}
{ll}\operatorname{Min}A\rightarrow  \operatorname{Min}B,\qquad & \mathfrak{p}\mapsto \mathfrak{p}\cap B\\ \\
\operatorname{Min}B\rightarrow  \operatorname{Min}A,\qquad & \mathfrak{P}\mapsto \mathfrak{P}A,
\end{array}
\]
we obtain the bijection
\[
\operatorname{Irr}(T_{m,\,v})\rightarrow  \operatorname{Irr}(H_{v}).
\]
\end{proof}

\begin{corollary}
\label{cor:T-ne-eset}
Given \(m\) and \(v=(a,\,b,\,c,\,d)\in \{1,\,\cdots,\,m\}^{4}\) satisfying \(q(v)\le m\), then
\[
T_{m,\,v}\ne \emptyset \iff H_{v}\ne \emptyset \iff \operatorname{in}_{v}(f_{N,\,M}) \text{ is not a monomial}.
\]
    
\end{corollary}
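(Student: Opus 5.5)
The two equivalences will be handled separately, each using the structure obtained in the proof of Proposition~\ref{prop:Tmv-and-Hv}.

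\emph{First equivalence, \(T_{m,v}\ne\emptyset\iff H_v\ne\emptyset\).} The hypotheses \(v\in\{1,\ldots,m\}^4\) and \(q(v)\le m\) are exactly those of Proposition~\ref{prop:Tmv-and-Hv}, so its coordinate-ring description is available. Recall that
\[
A\simeq B[V_1,\ldots,V_{K-4-(m-q)}].
\]
Its derivation used only the triangular shape \(F_0=\operatorname{in}_v(f_{N,M})(X_0,Y_0,Z_0,W_0)\), \(F_j=D_\xi U_j+G_j\), with \(D_\xi\) a unit in the Laurent ring. That shape holds regardless of whether \(T_{m,v}\) is empty, so the isomorphism holds unconditionally.

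The nonemptiness hypothesis in Proposition~\ref{prop:Tmv-and-Hv} was needed only to speak of irreducible components. Since a polynomial ring over \(B\) is nonzero if and only if \(B\) is nonzero, we get \(A\ne0\iff B\ne0\), which is \(T_{m,v}\ne\emptyset\iff H_v\ne\emptyset\).

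More concretely, the same argument can be run by hand:
\begin{itemize}
\item Any point \(\beta\in T_{m,v}\) has leading coefficients \((X_0,Y_0,Z_0,W_0)\in H_v\), because \(F_0(\beta)=0\).
\item Conversely, given a point of \(H_v\), set the free variables \(V_i\) arbitrarily and solve \(F_j=0\) successively for \(U_j=-G_j/D_\xi\), \(j=1,\ldots,m-q\). This uses that \(G_j\) involves only \(U_0,\ldots,U_{j-1}\) and the free variables.
\end{itemize}

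\emph{Second equivalence, \(H_v\ne\emptyset\iff\operatorname{in}_v(f_{N,M})\) is not a monomial.} Write \(g=\operatorname{in}_v(f_{N,M})=\sum_{u\in S}\varepsilon_u u^{p_u}\), where \(S=\{u: p_uv_u=q(v)\}\ne\emptyset\).

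\begin{itemize}
\item If \(|S|=1\), then \(g=\pm u^{p_u}\) is a unit on the torus \(D(X_0Y_0Z_0W_0)\), so \(H_v=\emptyset\).
\item If \(|S|\ge2\), choose two indices \(u_1\ne u_2\in S\). Fix arbitrary nonzero values for all variables other than \(u_1\), chosen generically so that the constant \(c=-\sum_{u\in S\setminus\{u_1\}}\varepsilon_u u^{p_u}\) is nonzero. This is possible because \(u_2\) appears in \(c\) with a nonzero coefficient, so \(c\) is a nonconstant function of \(u_2\ne0\). Then solve \(\varepsilon_{u_1}u_1^{p_{u_1}}=c\) over \(\mathbb C\); the solution \(u_1\) is nonzero, producing a point of \(H_v\).
\end{itemize}

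The only point needing care is the one flagged in the first part: checking that the isomorphism \(A\simeq B[V_1,\ldots]\) from Proposition~\ref{prop:Tmv-and-Hv} does not itself depend on \(T_{m,v}\ne\emptyset\). The explicit successive solution of the \(F_j\) sidesteps this question entirely. Everything else is elementary.
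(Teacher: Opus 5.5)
Your proposal is correct and follows essentially the paper's proof. The first equivalence rests on the triangular system \(F_0=\operatorname{in}_v(f_{N,M})(X_0,Y_0,Z_0,W_0)\), \(F_j=D_\xi U_j+G_j\); the paper uses the isomorphism \(A\simeq B[V_1,\ldots]\) for one direction and the same recursive solution \(U_j=-D_\xi^{-1}G_j\) (with the free coefficients set to zero) for the other. The only difference is in the last equivalence: the paper observes that \(B\ne0\) iff \(\operatorname{in}_v(f_{N,M})\) is not a unit of the Laurent polynomial ring, i.e.\ not a monomial, whereas you exhibit an explicit torus point when \(|S|\ge2\), and both arguments are valid.
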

\begin{proof}

We continue to use the notation of Proposition~\ref{prop:dim-of-stratum-q<m} and Proposition \ref{prop:Tmv-and-Hv}.

\(T_{m,\,v}\ne \emptyset\implies H_v \ne \emptyset\):
In fact, the isomorphism of coordinate rings as \(\mathbb{C}\)-algebras
\[
A\simeq B[V_{1},\,\cdots ,\,V_{K-4-(m-q)}],
\]
and the equivalence between the category of affine schemes and the opposite category of commutative rings  \cite[Tag~01I2]{Stacks} give the isomorphism
\[
T_{m,\,v}\simeq H_{v} \times  \mathbb{A}^{K-4-(m-q)}.
\]
This proves the implication \(T_{m,\,v}\ne\emptyset\implies H_v\ne\emptyset\).

\(H_v \ne \emptyset \implies T_{m,\,v}\ne \emptyset\):
Let \((X_0,\,Y_0,\,Z_0,\,W_0)\in H_v\). We construct an \(m\)-jet \(\beta=(x,\,y,\,z,\,w)\in T_{m,\,v}\). Write
\[
\begin{array}
{ll}x= t^{a}(X_{0}+X_{1}t+\cdots +X_{m-a}t^{m-a}),\qquad & y= t^{b}(Y_{0}+Y_{1}t+\cdots +Y_{m-b}t^{m-b})\\ \\
z= t^{c}(Z_{0}+Z_{1}t+\cdots +Z_{m-c}t^{m-c}),\qquad & w= t^{d}(W_{0}+W_{1}t+\cdots +W_{m-d}t^{m-d}).
\end{array}
\]
Let \(\varepsilon\xi^p\) be a monomial in \(\operatorname{in}_v(f_{N,\,M})(x,\,y,\,z,\,w)\), \(\xi \in \{x,\,y,\,z,\,w\}\). Write \[\xi = t^h(U_0+ U_1t+ \cdots + U_{m-h}t^{m-h}).\]
Then \(q:=q(v)=ph\). Let
\[
f_{N,M}(x,y,z,w)
\equiv t^q(F_0+F_1t+\cdots+F_{m-q}t^{m-q})
\pmod{t^{m+1}}.
\]
By our choice of the leading coefficients, we have
\[
F_0=\operatorname{in}_v(f_{N,\,M})(X_0,\,Y_0,\,Z_0,\,W_0)=0.
\]
Expanding \(\varepsilon\xi^p\), we see that its coefficient of \(t^{j+q}\)
contains exactly one term involving \(U_j\), namely
\[\varepsilon pU_0^{p-1}U_j.\]
All other terms in this coefficient involve only \(U_0,\ldots,U_{j-1}\).
Consequently,
\[
F_j=D_\xi U_j+G_j,
\qquad
D_\xi=\varepsilon pU_0^{p-1}\ne0,
\]where \(G_j\) is independent of \(U_j,U_{j+1},\cdots\).

Now set all higher-order coefficients of the other three coordinates equal to zero, and define recursively
\[
U_j=-D_\xi^{-1}G_j,\qquad 1\le j\le m-q.
\]At the \(j\)-th step, all of \(U_1,\ldots,U_{j-1}\) appearing in \(G_j\) have already been determined. Since \(D_\xi\ne0\), the recursion is well defined and ensures that \(F_j=0\). Since \(q=ph\ge h\) we get \[m-q\le m-h.\]It then suffices to set all remaining higher-order coefficients equal to zero.

The quadruple \(\beta=(x,\,y,\,z,\,w)\) constructed in this way satisfies
\[
F_0=F_1=\cdots=F_{m-q}=0,
\]
and hence
\[
f_{N,M}(x,y,z,w)\equiv0\pmod{t^{m+1}}.
\]
Moreover, the four leading coefficients remain nonzero, so
\[
\operatorname{ord}_m(x,y,z,w)=(a,b,c,d).
\]
Therefore, \((x,y,z,w)\in T_{m,v}\), proving that \(H_v \ne \emptyset \implies T_{m,\,v}\ne \emptyset\).

\(H_v\ne \emptyset \iff\operatorname{in}_{v}(f_{N,\,M}) \text{ is not a monomial}\):
The coordinate ring of \(H_v\) is 
\[
B=\cfrac{\mathbb{C}[X_{0},\,Y_{0},\,Z_{0},\,W_{0}]_{X_{0}Y_{0}Z_{0}W_{0}}}{(\operatorname{in}_{v}(f_{N,\,M})(X_{0},\,Y_{0},\,Z_{0},\,W_{0}))}=\cfrac{\mathbb{C}[X_{0}^{\pm 1},\,Y_{0}^{\pm 1},\,Z_{0}^{\pm 1},\,W_{0}^{\pm 1}]}{(\operatorname{in}_{v}(f_{N,\,M}))},
\]
Thus, \(H_v\ne\emptyset\iff B\ne0\iff\operatorname{in}_v(f_{N,\,M})\notin\mathbb{C}[X_0^{\pm1},\,Y_0^{\pm1},\,Z_0^{\pm1},\,W_0^{\pm1}]^\times\),
and the latter condition means that \(\operatorname{in}_v(f_{N,M})\)
is not a Laurent monomial.

\end{proof}

\subsection{The component criterion}
\label{subsec:component-criterion}

In fact, according to the above argument, we only need to consider the weight vector \(v\) satisfying \(v \in \{1,\, \cdots,\, m\}^4\) when \(q(v) \le m\) and \(\delta(v    )=2\).

\begin{lemma}
\label{lem:comp < m + 1}
Given \(m\) and \(v=(a,\,b,\,c,\,d)\in \{1,\,\cdots,\,m+1\}^{4}\) satisfying \(q(v)\le m\) and \(\delta(v)=2\), then \(v \in \{1,\,\cdots,\, m\}^4\) holds automatically.
    
\end{lemma}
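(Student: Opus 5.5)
We argue by contradiction and use only the definitions of \(q\) and \(\delta\). Suppose some coordinate of \(v=(a,b,c,d)\) equals \(m+1\). The plan is to show that then \(\delta(v)\) is far larger than \(2\).

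Recall
\[
\delta(v)=a+b+c+d-q(v),
\qquad
q(v)=\min\{2a,3b,Nc,Md\}.
\]
All four coordinates are at least \(1\), and \(q(v)\le m\) by hypothesis. The key step is a single estimate: if, say, \(v_u=m+1\), then
\[
\delta(v)\ \ge\ (m+1)+3\cdot 1-q(v)\ \ge\ (m+4)-m=4>2.
\]
This contradicts \(\delta(v)=2\).

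The estimate is independent of which coordinate equals \(m+1\). The bound \(a+b+c+d\ge v_u+3\) uses only positivity of the other three entries. So no case split over \(x,y,z,w\) is needed, although one could note it explicitly.

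There is no real obstacle here. The only point needing care is that \(q(v)\le m\) is used as an upper bound for the subtracted term. The minimum defining \(q(v)\) may involve the large coordinate, but this does not matter: the hypothesis \(q(v)\le m\) bounds \(q(v)\) directly, whatever term attains the minimum. Hence every coordinate is at most \(m\), i.e.\ \(v\in\{1,\ldots,m\}^4\).
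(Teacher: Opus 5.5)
Your argument is correct and is essentially the paper's proof. Assume some coordinate equals \(m+1\); the other three are at least \(1\), so \(a+b+c+d\ge m+4\), and \(q(v)\le m\) then forces \(\delta(v)\ge 4\), contradicting \(\delta(v)=2\). The paper reduces to \(d=m+1\) ``without loss of generality'', which matches your remark that no case split is needed.
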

\begin{proof}

Without loss of generality, assume that \(d=m+1\), then \[\delta(v ) =a+b+c+d-q(v) \ge m+4-q(v)\ge 4.\]
This is a contradiction.
    
\end{proof}

In summary, we have the theorem for finding an irreducible component as follows.

\begin{theorem}
\label{thm:determine-irr}
For sufficiently large \(m\), let
\[
\mathcal{P}_{m}=\{(v,\,P)\mid v\in \{1,\,\cdots ,\,m+1\}^{4},\,q(v)\le m,\,\delta(v)=2,\,P\in \operatorname{Irr}(T_{m,\,v})\}.
\]
Then there is a natural bijection
\[
\Phi _{m}:\mathcal{P}_{m}\rightarrow  \operatorname{Irr}(J_{m}^{0}(X_{N,\,M})),\qquad (v,\,P)\mapsto \overline{P}^{J_{m}^{0}(X_{N,\,M})}.
\]
\end{theorem}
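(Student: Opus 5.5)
The plan is to show that $\Phi_m$ is well defined (its image consists of components), injective, and surjective. All three follow from the dimension counts already established.

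\textbf{Well-definedness.} Let $m\gg0$, so that Proposition~\ref{prop:jet-component-dimension} applies and every component of $J_m^0(X_{N,M})$ has dimension exactly $3m+1$.

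First, $J_m^0(X_{N,M})$ is the disjoint union of the finitely many strata $T_{m,v}$ with $v\in\{1,\dots,m+1\}^4$. Hence each component $C$ contains a dense open subset of some irreducible component $P$ of some $T_{m,v}$, and then $C=\overline{P}$.

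By Proposition~\ref{prop:general-arc-in-jet-schemes}, the stratum with $q(v)>m$ cannot contribute, since $\overline{B_m}$ has dimension at most $3m$. So $q(v)\le m$. Proposition~\ref{prop:dim-of-stratum-q<m} then gives
\[
\dim P=3m-\delta(v)+3.
\]
Setting this equal to $3m+1$ forces $\delta(v)=2$.

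Conversely, suppose $(v,P)\in\mathcal P_m$. Then $\overline P$ is irreducible of dimension $3m+1$. Every component of $J_m^0$ has dimension $3m+1$ and $J_m^0$ has only finitely many components, so $\overline P$ lies in some component $C'$ of the same dimension. Since $\overline P\subset C'$ are irreducible closed sets of equal dimension, $\overline P=C'$. Thus $\overline P$ is a component, and $\Phi_m$ is well defined.

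By Lemma~\ref{lem:comp < m + 1} these $v$ lie in $\{1,\dots,m\}^4$. This is not needed for the bijection itself, but it is what makes Proposition~\ref{prop:Tmv-and-Hv} applicable later.

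\textbf{Surjectivity.} This was essentially shown in the first part: each component $C$ equals $\overline P$ for some $P\in\operatorname{Irr}(T_{m,v})$ with $q(v)\le m$ and $\delta(v)=2$, so $C=\Phi_m(v,P)$.

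\textbf{Injectivity.} Suppose $\overline P=\overline{P'}=C$. I would argue that $v$ is determined by $C$. The map $\operatorname{ord}_m$ is constructible on $J_m^0$, each $T_{m,v}$ is locally closed, and each generic-type property is upper semicontinuous. Concretely, $C$ has a generic point $\eta$, and $\eta$ lies in a unique stratum. Since $P$ is open-dense in $\overline P$, the generic point of $\overline P$ lies in $P\subset T_{m,v}$. Likewise it lies in $P'\subset T_{m,v'}$. The strata are disjoint, so $v=v'$.

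Then $P$ and $P'$ are both components of $T_{m,v}$ containing the generic point of $C$, so $P\cap P'\neq\emptyset$. Corollary~\ref{cor:distinct-irr-are-disjoint} then gives $P=P'$.

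\textbf{Main obstacle.} The delicate step is the first one: making rigorous that a component of $J_m^0$ is the closure of a component of a single stratum. This requires that the strata are locally closed and finite in number, so that $J_m^0=\bigsqcup_v T_{m,v}$ is a finite constructible decomposition. Given that, the irreducible closed set $C$ has its generic point in exactly one stratum $T_{m,v}$, and $C\cap T_{m,v}$ is dense in $C$. Its closure inside $T_{m,v}$ is irreducible and is contained in a component $P$ of $T_{m,v}$. From
\[
C\subset\overline P\subset J_m^0
\]
and the maximality of $C$, we get $C=\overline P$.

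I expect the remaining dimension bookkeeping to be routine from the earlier propositions.
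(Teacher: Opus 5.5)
Your proof is correct and follows essentially the same route as the paper: well-definedness by the dimension count $\dim P=3m+1$ against Proposition~\ref{prop:jet-component-dimension}, surjectivity via the bound $\dim\overline{B_m}\le 3m$ plus irreducibility, and injectivity from disjointness of the strata together with Corollary~\ref{cor:distinct-irr-are-disjoint}. The only difference is presentational. You argue with the generic point of $C$, whereas the paper works with the open set $U$ from Proposition~\ref{prop:general-arc-in-jet-schemes} and with the openness of $P$ in $\overline{P}$, concluding that two nonempty opens of an irreducible set meet.
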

\begin{proof}
From Lemma \ref{lem:comp < m + 1} \(v\in \{1,\,\cdots,\, m\}^4\) automatically holds. We first show that \(\Phi _{m}\) is well-defined.

For \((v,\,P)\in \mathcal{P}_{m}\), we have
\[
\dim P=\dim \overline{P}^{J_{m}^{0}(X_{N,\,M})} =3m-\delta(v)+3=3m+1.
\]
There exists \(C\in \operatorname{Irr}(J_{m}^{0}(X_{N,\,M}))\) such that
\[
\overline{P}^{J_{m}^{0}(X_{N,\,M})} \subset C.
\]
Since the two sides have the same dimension \(3m+1\) and \(\overline{P}^{J_{m}^{0}(X_{N,\,M})}\) and \( C\) are both irreducible, equality holds.

We next show that \(\Phi _{m}\) is surjective.

Let \(C\in \operatorname{Irr}(J_{m}^{0}(X_{N,\,M}))\). There exists a nonempty Zariski open subset \(U\subset C\) such that
\[
q(\operatorname{ord}_{m}\beta )\le m,\qquad  \forall \beta \in  U.
\]
Write
\[
\bigcup \limits_{v\in \{1,\,\cdots ,\,m+1\}^{4},\,q(v)\le m} \operatorname{Irr}(T_{m,\,v})=\{P_{1},\,\cdots ,\,P_{L}\}.
\]
Then
\[
U\subset \bigcup \limits_{v\in \{1,\,\cdots ,\,m+1\}^{4},\,q(v)\le m} T_{m,\,v}=\bigcup \limits_{i=1}^{L}P_{i}.
\]
Thus \(U=\bigcup \limits_{i=1}^{L}(P_{i}\cap U)\), and hence
\[
C=\overline{U}^{C}=\bigcup \limits_{i=1}^{L}\overline{P_{i}\cap U}^{C}.
\]
Since \(C\) is irreducible, there exists \(i_{0}\) such that
\[
C=\overline{P_{i_{0}}\cap U}^{C} \subset \overline{P_{i_{0}}}^{J_{m}^{0}(X_{N,\,M})}.
\]
Since \(P_{i_{0}}\) is irreducible, \(\overline{P_{i_{0}}}^{J_{m}^{0}(X_{N,\,M})}\) is irreducible. Since \(C\) is an irreducible component, we obtain
\[
C=\overline{P_{i_{0}}}^{J_{m}^{0}(X_{N,\,M})}.
\]
Write \(P_{i_{0}}\in \operatorname{Irr}(T_{m,\,v_{0}})\), where \(q(v_{0})\le m\). Since
\[
\dim P_{i_{0}}=3m-\delta(v_{0})+3=\dim C=3m+1,
\]
we obtain \(\delta(v_0)=2\). Therefore, \(\Phi _{m}\) is surjective.

We finally show that \(\Phi _{m}\) is injective. For a fixed \(v\), write
\[
\operatorname{Irr}(T_{m,\,v}) =\{P,\,P_{1},\,\cdots ,\,P_{r}\}.
\]
By Corollary~\ref{cor:distinct-irr-are-disjoint}, \(P\cap P_i=\emptyset\) for every \(i\). Thus \(P=T_{m,\,v}\setminus \bigcup \limits_{i=1}^{r}P_{i}\) is both open and closed in \(T_{m,\,v}\). The condition \(\operatorname{ord}_{m}\beta=v\) implies that \(T_{m,\,v}\) is an open subset of a closed set; namely,
\[
T_{m,\,v}=C\cap V,
\]
where \(C\) and \(V\) are, respectively, a closed subset and an open subset of \(J_{m}^{0}(X_{N,\,M})\). Hence \(P\) is open in \(C\). Since \(\overline{P}^{J_{m}^{0}(X_{N,\,M})}\subset C\), it follows that \(P\) is open in \(\overline{P}^{J_{m}^{0}(X_{N,\,M})}\).

Let \((v,\,P),\,(v',\,P')\in \mathcal{P}_{m}\) satisfy
\[
\Phi _{m}(v,\,P)=\Phi _{m}(v',\,P'):=D.
\]
Then \(P\) and \(P'\) are open subsets of \(D\). Since \(D\) is irreducible, we have
\[
P\cap P'\ne \emptyset.
\]
If \(v\ne v'\), then \(T_{m,\,v}\cap T_{m,\,v'}= \emptyset\), which is impossible. If \(v=v'\), Corollary~\ref{cor:distinct-irr-are-disjoint} shows that the distinct irreducible components of \(T_{m,\,v}\) are disjoint. Therefore, we must have
\[
v=v',\qquad P=P'.
\]
This completes the proof.
\end{proof}

\begin{corollary}
    \label{cor:determine-irr-N}
Suppose
\[
\mathcal{V}_{N,\,M}=\{v\in  \mathbb{Z}_{>0}^{4}\mid \delta(v)=2,\, \operatorname{in}_{v}(f_{N,\,M})\text{ is not a monomial}\}
\]
then
\[
\#\operatorname{Irr}\bigl(\operatorname{Arc}^{0}(X_{N,\,M})\bigr)=\sum \limits_{v\in \mathcal{V}_{N,\,M}}\#\bigl\{\text{irreducible factors of }\operatorname{in}_{v}(f_{N,\,M}) \text{ in }\mathbb{C}[X_{0},\,Y_{0},\,Z_{0},\,W_{0}]\bigr\}
.\]
\end{corollary}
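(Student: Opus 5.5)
The plan is to chain the bijections already established. Fix $m\gg0$ large enough that Proposition~\ref{prop:jet-component-dimension} and Theorem~\ref{thm:determine-irr} both apply. Then
\[
\#\operatorname{Irr}\bigl(\operatorname{Arc}^0(X_{N,M})\bigr)=\#\operatorname{Irr}\bigl(J_m^0(X_{N,M})\bigr)=\#\mathcal P_m
=\sum_{v}\#\operatorname{Irr}(T_{m,v}),
\]
where the last sum runs over $v\in\{1,\ldots,m+1\}^4$ with $q(v)\le m$ and $\delta(v)=2$. By Lemma~\ref{lem:comp < m + 1}, every such $v$ lies in $\{1,\ldots,m\}^4$. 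By Corollary~\ref{cor:T-ne-eset}, $T_{m,v}\neq\emptyset$ exactly when $\operatorname{in}_v(f_{N,M})$ is not a monomial. For those $v$, Proposition~\ref{prop:Tmv-and-Hv} gives $\#\operatorname{Irr}(T_{m,v})=\#\operatorname{Irr}(H_v)$.

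Next I show that the index set equals $\mathcal V_{N,M}$ once $m$ is large. The condition $\delta(v)=2$ says $a+b+c+d=q(v)+2$. Since $q(v)\le 2a$, and similarly for the other coordinates, we get $q\le \min(2a,3b,Nc,Md)$. Hence
\[
a+b+c+d\ge q\Bigl(\tfrac12+\tfrac13+\tfrac1N+\tfrac1M\Bigr),
\]
and the coefficient in parentheses exceeds $1$ when $N\le5$ and $M\ge N$. Combined with $a+b+c+d=q+2$, this bounds $q$, and therefore every coordinate of $v$, by a constant $c(N,M)$. So $\mathcal V_{N,M}$ is finite, and for $m\ge c(N,M)$ every $v\in\mathcal V_{N,M}$ satisfies $q(v)\le m$ and $v\in\{1,\ldots,m\}^4$. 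In the equality case $\tfrac12+\tfrac13+\tfrac1N+\tfrac1M=1$ (e.g.\ $N=M=6$, which cannot occur here, or other boundary cases) the bound would fail; I will check that $N\in\{3,4,5\}$, $M\ge N$ always gives strict inequality. It does: the sum is at least $\tfrac12+\tfrac13+\tfrac15+\tfrac1M>1$.

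The remaining step is to identify $\#\operatorname{Irr}(H_v)$ with the number of irreducible factors of $\operatorname{in}_v(f)$ in $\mathbb C[X_0,Y_0,Z_0,W_0]$. The components of $V(g)$ in the torus $(\mathbb C^\times)^4$ correspond to the distinct prime factors of $g$ in the Laurent polynomial ring, which is a UFD whose units are the Laurent monomials. Monomial factors of a polynomial become units and drop out.

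I expect the main obstacle to be justifying that factor counts in $\mathbb C[X_0,\ldots,W_0]$ agree with counts in the Laurent ring. Here $\operatorname{in}_v(f)$ is a sum of at least two of the pure powers $x^2,y^3,z^N,w^M$ in separate variables. So it has no monomial factor, and it is squarefree, since its partials have no common factor with it off the coordinate hyperplanes. Hence irreducible factors are counted without multiplicity, each one meets the torus, and distinct factors give distinct components of $H_v$.
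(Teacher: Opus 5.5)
Your proposal is correct and follows the paper's own argument. You use the same chain $\#\operatorname{Irr}(\operatorname{Arc}^0)=\#\operatorname{Irr}(J_m^0)=\#\mathcal P_m=\sum_v\#\operatorname{Irr}(T_{m,v})=\sum_v\#\operatorname{Irr}(H_v)$ via Theorem~\ref{thm:determine-irr}, Corollary~\ref{cor:T-ne-eset} and Proposition~\ref{prop:Tmv-and-Hv}, then read off the components of $H_v$ from the irreducible factors of the non-monomial initial form; your explicit bound on $q(v)$ just makes the paper's ``letting $m\to\infty$'' step precise (compare Lemma~\ref{lem:q-upper-bound}). Two harmless blemishes: the aside about $N=M=6$ is wrong, since $\tfrac12+\tfrac13+\tfrac16+\tfrac16=\tfrac76\ne1$, and the squarefreeness check is unnecessary because the statement counts distinct irreducible factors.
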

\begin{proof}

From Theorem~\ref{thm:determine-irr}, Proposition~\ref{prop:Tmv-and-Hv} and Corollary \ref{cor:T-ne-eset}, for \(m\gg 0\) we have
\begin{align*}
    \#\operatorname{Irr}(\operatorname{Arc}^0(X_{N,\,M}))={}&\#\operatorname{Irr}(J_m^0(X_{N,\,M}))=\#\mathcal{P}_m\\
    ={}&\sum\limits_{\substack{
v\in\{1,\ldots,m\}^{4}\\
q(v)\le m,\ \delta(v)=2
}}\#\operatorname{Irr}(T_{m,\,v})\\
    ={}&\sum\limits_{\substack{v\in \{1,\,\cdots ,\,m\}^{4},\,q(v)\le m,\,\delta(v)=2\\\operatorname{in}_v(f_{N,\,M})\text{ is not monomial}}}\#\operatorname{Irr}(H_v).
\end{align*}
Thus, letting \(m\to \infty\) we have
\[
\#\operatorname{Irr}\bigl(\operatorname{Arc}^{0}(X_{N,\,M})\bigr)=\sum \limits_{v\in \mathcal{V}_{N,\,M}}\#
\operatorname{Irr}(H_v).
\]Since the non-monomial initial form \(\operatorname{in}_v(f_{N,M})\) is not divisible by any coordinate variable, each of its irreducible factors defines an irreducible
hypersurface meeting \((\mathbb C^\times)^4\). The intersections of these hypersurfaces with the torus are precisely the irreducible components of \(H_v\), we get
\[
\#\operatorname{Irr}\bigl(\operatorname{Arc}^{0}(X_{N,\,M})\bigr)=\sum \limits_{v\in \mathcal{V}_{N,\,M}}\#\bigl\{\text{irreducible factors of }\operatorname{in}_{v}(f_{N,\,M}) \text{ in }\mathbb{C}[X_{0},\,Y_{0},\,Z_{0},\,W_{0}]\bigr\}.
\]
    
\end{proof}

\subsection{Enumerations and factorizations}
\label{subsec:enum-fact}
We first show that the enumeration required by Corollary~\ref{cor:determine-irr-N} is finite.

\begin{lemma}
\label{lem:q-upper-bound}
Let
\[
v=(a,\,b,\,c,\,d)\in\mathcal{V}_{N,\,M}
\]
and write \(q=q(v)\). Then
\[
q\leq
\begin{cases}
6,&N=3,\\
12,&N=4,\\
30,&N=5.
\end{cases}
\]
\end{lemma}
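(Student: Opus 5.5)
The bound will come from the constraint $\delta(v)=2$ together with the inequalities $2a\ge q$, $3b\ge q$, $Nc\ge q$ and $Md\ge q$, which hold because $q=q(v)$ is the minimum of these four weights. Each coordinate of $v$ is therefore at least the ceiling of $q$ divided by its exponent:
\[
a\ge\lceil q/2\rceil,\qquad b\ge\lceil q/3\rceil,\qquad c\ge\lceil q/N\rceil,\qquad d\ge\lceil q/M\rceil\ge1 .
\]
From $\delta(v)=a+b+c+d-q=2$ we then obtain the necessary condition
\[
\lceil q/2\rceil+\lceil q/3\rceil+\lceil q/N\rceil+1\le q+2 .
\]
The plan is to show that this condition forces the stated bound in each of the three cases $N=3,4,5$. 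We drop the $d$ term down to $1$ and use $M\ge N$ only through $d\ge 1$.

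The easy regime is the one where $q$ is divisible by the relevant lcm. There the left side equals $q(1/2+1/3+1/N)+1$, and the condition becomes $q(1/2+1/3+1/N-1)\le 1$. For $N=3,4,5$ the quantity $1/2+1/3+1/N-1$ equals $1/6$, $1/12$ and $1/30$ respectively. This gives $q\le 6$, $12$, $30$, and it explains where the numbers in the statement come from.

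For general $q$ we write $\lceil q/k\rceil\ge q/k$, which gives $q\cdot(1/2+1/3+1/N)+1\le q+2$. Rearranged, this is $q(1/2+1/3+1/N-1)\le 1$, so directly $q\le 6,12,30$. The ceilings only make the inequality stronger, and there is no residue analysis needed: the bound follows uniformly from the real-number inequality. I would write this out once, substituting $N$ to get $6$, $12$ and $30$.

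The only subtle point is that the hypothesis that $\operatorname{in}_v(f_{N,M})$ is not a monomial is not needed for the bound. It only matters that $v\in\mathbb Z_{>0}^4$, which gives $d\ge1$, and that $q$ is the minimum. I would note this explicitly, and also remark that $d\ge1$ is used rather than the sharper $d\ge\lceil q/M\rceil$, which is where $M$ would enter later in the enumeration. I expect no real obstacle beyond checking the arithmetic $1/2+1/3+1/N-1\in\{1/6,1/12,1/30\}$.
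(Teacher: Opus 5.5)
Your proposal is correct and is essentially the paper's own argument: bound $a,b,c$ below by $\lceil q/k\rceil\ge q/k$ and $d\ge1$, substitute into $a+b+c+d=q+2$, and read off $q\bigl(\tfrac12+\tfrac13+\tfrac1N-1\bigr)\le1$ with coefficients $\tfrac16,\tfrac1{12},\tfrac1{30}$. The separate divisible-$q$ paragraph is redundant but harmless, and you are right that the non-monomial condition plays no role.
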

\begin{proof}
By the definition of \(q(v)\), we have
\[
a\geq\left\lceil\frac{q}{2}\right\rceil,
\qquad
b\geq\left\lceil\frac{q}{3}\right\rceil,
\qquad
c\geq\left\lceil\frac{q}{N}\right\rceil,
\qquad
d\geq1.
\]
Since \(\delta(v)=2\), we have
\[
a+b+c+d=q+2.
\]
Consequently,
\[
q+2
\geq
\left\lceil\frac{q}{2}\right\rceil
+
\left\lceil\frac{q}{3}\right\rceil
+
\left\lceil\frac{q}{N}\right\rceil
+1
\geq
\frac{q}{2}+\frac{q}{3}+\frac{q}{N}+1.
\]
It follows that
\[
q\left(
\frac{1}{2}+\frac{1}{3}+\frac{1}{N}-1
\right)
\leq1.
\]
For \(N=3,4,5\), the coefficient on the left-hand side is respectively
\[
\frac{1}{6},
\qquad
\frac{1}{12},
\qquad
\frac{1}{30}.
\]
The stated bounds follow.
\end{proof}

The finite enumeration described above was carried out using \textsc{Singular}, version \(4.0.2\) \cite{Singular}. The program checks every integer \(q\) in the range provided by Lemma~\ref{lem:q-upper-bound} and every possible quadruple \((a,\,b,\,c,\,d)\). Notice also that if \(M>q_{\max}\),
then
\[
Md>q
\]
for every \(d\geq1\) and every \(q\leq q_{\max}\). Therefore, the case \(M=q_{\max}+1\) represents every \(M\geq q_{\max}+1\).

The resulting weight vectors are listed in Table~\ref{tab:relevant-weight-vectors}.

\begingroup
\small
\setlength{\tabcolsep}{4pt}
\setlength{\LTleft}{\fill}
\setlength{\LTright}{\fill}
\renewcommand{\arraystretch}{1.15}

\begin{longtable}{
>{$}c<{$}|
>{$}c<{$}|
>{$}c<{$}|
>{\centering\arraybackslash$}p{0.48\textwidth}<{$}
}
\caption{The relevant weight vectors for all values of \(N\) and \(M\).}
\label{tab:relevant-weight-vectors}\\

\hline
N & q_{\max} & \text{Range of }M
& \text{Candidates }(v;q(v))\\
\hline
\endfirsthead

\multicolumn{4}{c}{
\tablename\ \thetable\space (continued)
}\\
\hline
N & q_{\max} & \text{Range of }M
& \text{Candidates }(v;q(v))\\
\hline
\endhead

\hline
\multicolumn{4}{r}{Continued on the next page}
\\
\endfoot

\hline
\endlastfoot

3&6&3\leq M\leq5&
\bigl((2,1,1,1);3\bigr)
\\ \hline

3&6&M\geq6&
\begin{gathered}
\bigl((2,1,1,1);3\bigr),\\
\bigl((3,2,2,1);6\bigr)
\end{gathered}
\\ \hline

4&12&4\leq M\leq5&
\bigl((2,2,1,1);4\bigr)
\\ \hline

4&12&6\leq M\leq7&
\begin{gathered}
\bigl((2,2,1,1);4\bigr),\\
\bigl((3,2,2,1);6\bigr)
\end{gathered}
\\ \hline

4&12&8\leq M\leq11&
\begin{gathered}
\bigl((2,2,1,1);4\bigr),\\
\bigl((3,2,2,1);6\bigr),\\
\bigl((4,3,2,1);8\bigr)
\end{gathered}
\\ \hline

4&12&M\geq12&
\begin{gathered}
\bigl((2,2,1,1);4\bigr),\\
\bigl((3,2,2,1);6\bigr),\\
\bigl((4,3,2,1);8\bigr),\\
\bigl((6,4,3,1);12\bigr)
\end{gathered}
\\ \hline

5&30&M=5&
\bigl((3,2,1,1);5\bigr)
\\ \hline

5&30&6\leq M\leq7&
\bigl((3,2,2,1);6\bigr)
\\ \hline

5&30&M=8&
\begin{gathered}
\bigl((3,2,2,1);6\bigr),\\
\bigl((4,3,2,1);8\bigr)
\end{gathered}
\\ \hline

5&30&M=9&
\begin{gathered}
\bigl((3,2,2,1);6\bigr),\\
\bigl((5,3,2,1);9\bigr)
\end{gathered}
\\ \hline

5&30&10\leq M\leq11&
\begin{gathered}
\bigl((3,2,2,1);6\bigr),\\
\bigl((5,4,2,1);10\bigr)
\end{gathered}
\\ \hline

5&30&12\leq M\leq13&
\begin{gathered}
\bigl((3,2,2,1);6\bigr),\\
\bigl((5,4,2,1);10\bigr),\\
\bigl((6,4,3,1);12\bigr)
\end{gathered}
\\ \hline

5&30&M=14&
\begin{gathered}
\bigl((3,2,2,1);6\bigr),\\
\bigl((5,4,2,1);10\bigr),\\
\bigl((6,4,3,1);12\bigr),\\
\bigl((7,5,3,1);14\bigr)
\end{gathered}
\\ \hline

5&30&15\leq M\leq17&
\begin{gathered}
\bigl((3,2,2,1);6\bigr),\\
\bigl((5,4,2,1);10\bigr),\\
\bigl((6,4,3,1);12\bigr),\\
\bigl((8,5,3,1);15\bigr)
\end{gathered}
\\ \hline

5&30&18\leq M\leq19&
\begin{gathered}
\bigl((3,2,2,1);6\bigr),\\
\bigl((5,4,2,1);10\bigr),\\
\bigl((6,4,3,1);12\bigr),\\
\bigl((8,5,3,1);15\bigr),\\
\bigl((9,6,4,1);18\bigr)
\end{gathered}
\\ \hline

5&30&20\leq M\leq23&
\begin{gathered}
\bigl((3,2,2,1);6\bigr),\\
\bigl((5,4,2,1);10\bigr),\\
\bigl((6,4,3,1);12\bigr),\\
\bigl((8,5,3,1);15\bigr),\\
\bigl((9,6,4,1);18\bigr),\\
\bigl((10,7,4,1);20\bigr)
\end{gathered}
\\ \hline

5&30&24\leq M\leq29&
\begin{gathered}
\bigl((3,2,2,1);6\bigr),\\
\bigl((5,4,2,1);10\bigr),\\
\bigl((6,4,3,1);12\bigr),\\
\bigl((8,5,3,1);15\bigr),\\
\bigl((9,6,4,1);18\bigr),\\
\bigl((10,7,4,1);20\bigr),\\
\bigl((12,8,5,1);24\bigr)
\end{gathered}
\\ \hline

5&30&M\geq30&
\begin{gathered}
\bigl((3,2,2,1);6\bigr),\\
\bigl((5,4,2,1);10\bigr),\\
\bigl((6,4,3,1);12\bigr),\\
\bigl((8,5,3,1);15\bigr),\\
\bigl((9,6,4,1);18\bigr),\\
\bigl((10,7,4,1);20\bigr),\\
\bigl((12,8,5,1);24\bigr),\\
\bigl((15,10,6,1);30\bigr)
\end{gathered}
\\

\end{longtable}
\endgroup

Absolute factorizations of \(\operatorname{in}_v(f_{N,\,M})\) were calculated using \textsc{Singular}, version \(4.0.2\), and the absFactorize procedure \texttt{absFactorize} from \texttt{absfact.lib}.

\begingroup
\footnotesize
\setlength{\tabcolsep}{3pt}
\setlength{\LTleft}{\fill}
\setlength{\LTright}{\fill}
\renewcommand{\arraystretch}{1.15}

\begin{longtable}{
>{$}c<{$}|
>{\centering\arraybackslash$}p{0.13\textwidth}<{$}|
>{\centering\arraybackslash$}p{0.17\textwidth}<{$}|
>{$}c<{$}|
>{\centering\arraybackslash$}p{0.31\textwidth}<{$}|
>{$}c<{$}
}
\caption{Initial forms and numbers of absolute irreducible factors.}
\label{tab:absolute-factorizations}\\

\hline
N
&
\text{Range of }M
&
v
&
q(v)
&
\operatorname{in}_{v}(f_{N,\,M})
&
\#\text{ factors}
\\
\hline
\endfirsthead

\multicolumn{6}{c}{
\tablename\ \thetable\space (continued)
}\\
\hline
N
&
\text{Range of }M
&
v
&
q(v)
&
\operatorname{in}_{v}(f_{N,\,M})
&
\#\text{ factors}
\\
\hline
\endhead

\hline
\multicolumn{6}{r}{Continued on the next page}
\\
\endfoot

\hline
\endlastfoot

3
&
M=3
&
(2,1,1,1)
&
3
&
y^{3}-z^{3}-w^{3}
&
1
\\ \hline

3
&
M\geq4
&
(2,1,1,1)
&
3
&
y^{3}-z^{3}
&
3
\\ \hline

3
&
M=6
&
(3,2,2,1)
&
6
&
x^{2}+y^{3}-z^{3}-w^{6}
&
1
\\ \hline

3
&
M\geq7
&
(3,2,2,1)
&
6
&
x^{2}+y^{3}-z^{3}
&
1
\\ \hline

4
&
M=4
&
(2,2,1,1)
&
4
&
x^{2}-z^{4}-w^{4}
&
1
\\ \hline

4
&
M\geq5
&
(2,2,1,1)
&
4
&
x^{2}-z^{4}
&
2
\\ \hline

4
&
M=6
&
(3,2,2,1)
&
6
&
x^{2}+y^{3}-w^{6}
&
1
\\ \hline

4
&
M\geq7
&
(3,2,2,1)
&
6
&
x^{2}+y^{3}
&
1
\\ \hline

4
&
M=8
&
(4,3,2,1)
&
8
&
x^{2}-z^{4}-w^{8}
&
1
\\ \hline

4
&
M\geq9
&
(4,3,2,1)
&
8
&
x^{2}-z^{4}
&
2
\\ \hline

4
&
M=12
&
(6,4,3,1)
&
12
&
x^{2}+y^{3}-z^{4}-w^{12}
&
1
\\ \hline

4
&
M\geq13
&
(6,4,3,1)
&
12
&
x^{2}+y^{3}-z^{4}
&
1
\\ \hline

5
&
M=5
&
(3,2,1,1)
&
5
&
-z^{5}-w^{5}
&
5
\\ \hline

5
&
M=6
&
(3,2,2,1)
&
6
&
x^{2}+y^{3}-w^{6}
&
1
\\ \hline

5
&
M\geq7
&
(3,2,2,1)
&
6
&
x^{2}+y^{3}
&
1
\\ \hline

5
&
M=8
&
(4,3,2,1)
&
8
&
x^{2}-w^{8}
&
2
\\ \hline

5
&
M=9
&
(5,3,2,1)
&
9
&
y^{3}-w^{9}
&
3
\\ \hline

5
&
M=10
&
(5,4,2,1)
&
10
&
x^{2}-z^{5}-w^{10}
&
1
\\ \hline

5
&
M\geq11
&
(5,4,2,1)
&
10
&
x^{2}-z^{5}
&
1
\\ \hline

5
&
M=12
&
(6,4,3,1)
&
12
&
x^{2}+y^{3}-w^{12}
&
1
\\ \hline

5
&
M\geq13
&
(6,4,3,1)
&
12
&
x^{2}+y^{3}
&
1
\\ \hline

5
&
M=14
&
(7,5,3,1)
&
14
&
x^{2}-w^{14}
&
2
\\ \hline

5
&
M=15
&
(8,5,3,1)
&
15
&
y^{3}-z^{5}-w^{15}
&
1
\\ \hline

5
&
M\geq16
&
(8,5,3,1)
&
15
&
y^{3}-z^{5}
&
1
\\ \hline

5
&
M=18
&
(9,6,4,1)
&
18
&
x^{2}+y^{3}-w^{18}
&
1
\\ \hline

5
&
M\geq19
&
(9,6,4,1)
&
18
&
x^{2}+y^{3}
&
1
\\ \hline

5
&
M=20
&
(10,7,4,1)
&
20
&
x^{2}-z^{5}-w^{20}
&
1
\\ \hline

5
&
M\geq21
&
(10,7,4,1)
&
20
&
x^{2}-z^{5}
&
1
\\ \hline

5
&
M=24
&
(12,8,5,1)
&
24
&
x^{2}+y^{3}-w^{24}
&
1
\\ \hline

5
&
M\geq25
&
(12,8,5,1)
&
24
&
x^{2}+y^{3}
&
1
\\ \hline

5
&
M=30
&
(15,10,6,1)
&
30
&
x^{2}+y^{3}-z^{5}-w^{30}
&
1
\\ \hline

5
&
M\geq31
&
(15,10,6,1)
&
30
&
x^{2}+y^{3}-z^{5}
&
1
\\\end{longtable}
\endgroup
\Needspace{6\baselineskip}
\subsection{Proof of Theorem \ref{mta}}
\label{subsec:final-proof}

\begin{proof}
For each pair \((N,M)\), substitute the lists obtained in
Subsection~\ref{subsec:enum-fact} into the formula of
Corollary~\ref{cor:determine-irr-N}. This gives precisely the values
listed in Theorem~\ref{mta}.
\end{proof}

\section{The Eight Open Families in the Arc Space of the
\texorpdfstring{$E_8$}{E8} Surface}
\label{sec:e8-open-families}

We turn to Theorem~\ref{mtb}, which concerns the surface \(X_5\)
rather than the three-dimensional hypersurfaces of
Section~\ref{sec:solution}. We first determine the resolution data
and identify the exact-order families by transversality. We then
prove their separation using corrected curve selection and formal
wedge adjacency, establish Zariski openness in the centered arc
scheme, and deduce openness of the complex point sets in the
coefficientwise product topology. Irreducible components and generic
points are understood in the scheme-theoretic Zariski topology.
Closures are also taken in the Zariski topology unless explicitly
marked by the superscript $\mathrm{prod}$, which denotes closure
in the coefficientwise product topology on $\mathcal A(\mathbb C)$.

\subsection{The surface and the exact-order families}
\label{subsec:e8-setup}

Set
\[
X:=X_5=\{x^2+y^3=z^5\}\subset\mathbb A^3
\]
and define the reduced centered arc scheme
\[
\mathcal A:=\operatorname{Arc}^{0}(X)
=\bigl((\tau_{\infty,0}^X)^{-1}(0)\bigr)_{\mathrm{red}}.
\]
Thus \(\mathcal A\) is a closed reduced subscheme of
\(\operatorname{Arc}(X)\). Every field-valued point of \(\mathcal A\)
has coordinate expansions
\[
x(t)=\sum_{m=1}^{\infty}x_mt^m,
\qquad
y(t)=\sum_{m=1}^{\infty}y_mt^m,
\qquad
z(t)=\sum_{m=1}^{\infty}z_mt^m.
\]
Here \(x_m,y_m,z_m\) also denote the corresponding coefficient
functions on \(\mathcal A\). The sums begin with \(m=1\) because the
arc is centered at the origin. Thus every nonzero coordinate series
has order at least one, whereas an identically zero coordinate series
has order \(+\infty\). The constant zero arc is denoted by
\(0_\infty\).

For vectors
\(u,w\in(\mathbb Z_{\geq0}\cup\{+\infty\})^3\), write
\[
u\leq w
\quad\Longleftrightarrow\quad
u_k\leq w_k\quad(k=1,2,3),
\]
where every finite integer is smaller than \(+\infty\); write
\(u<w\) when \(u\leq w\) and \(u\ne w\). All vector inequalities in
this section refer to this coordinatewise order.

The eight order vectors, numbered in the order of the corresponding
families in Johnson--Koll\'ar's Example~14
\cite[pp.~530--531]{JK}, are
\[
\begin{array}{c|cccccccc}
i&1&2&3&4&5&6&7&8\\ \midrule
v_i&(15,10,6)&(3,2,2)&(6,4,3)&(9,6,4)&
(12,8,5)&(5,4,2)&(10,7,4)&(8,5,3).
\end{array}
\]
For \(v_i=(p_i,q_i,r_i)\), set
\[
Z_i:=\{\alpha\in\mathcal A:
\ord_\alpha(x)\geq p_i,\ \ord_\alpha(y)\geq q_i,
\ \ord_\alpha(z)\geq r_i\},
\qquad
h_i:=x_{p_i}y_{q_i}z_{r_i},
\]
and
\begin{equation}\label{eq:e8-Wi-definition}
W_i:=Z_i\cap D_{\mathcal A}(h_i)
=\{\alpha\in\mathcal A:
(\ord_\alpha x,\ord_\alpha y,\ord_\alpha z)=v_i\}.
\end{equation}
Here
\(D_{\mathcal A}(h_i)=\{\alpha\in\mathcal A:h_i(\alpha)\ne0\}\).
We give \(Z_i\) its reduced closed subscheme structure and \(W_i\)
the induced reduced locally closed structure. In particular,
\(W_i(\mathbb C)\) is the set of complex arcs with exact order vector
\(v_i\); no closure is taken in this definition.

More explicitly, if \(M_i=\max\{p_i,q_i,r_i\}\), then
\[
Z_i=(\tau_{\infty,M_i}^X)^{-1}(B_i),
\]
where \(B_i\subset J_{M_i}^0(X)\) is cut out by the finitely many
coefficient vanishings below \(p_i,q_i,r_i\). Hence \(Z_i\) is a
closed cylinder and \(W_i\) is Zariski open in \(Z_i\).
Subsection~\ref{subsec:e8-product-openness} proves the stronger
assertion that \(W_i\) is Zariski open in \(\mathcal A\), and
deduces the coefficientwise product-topology openness of
\(W_i(\mathbb C)\) in \(\mathcal A(\mathbb C)\).

For every field extension \(F/\mathbb C\), put
\[
\mathcal A(F):=\operatorname{Hom}_{\mathbb C}
 (\operatorname{Spec}F,\mathcal A),
\qquad
W_i(F):=\operatorname{Hom}_{\mathbb C}
 (\operatorname{Spec}F,W_i).
\]
Equivalently, \(W_i(F)\) consists of the arcs
\(\operatorname{Spec}F[[t]]\to X_F\), centered at the origin, whose
three coordinate orders form \(v_i\).

We record the geometric property required after scalar extension.

\begin{lemma}[Geometric normality]
\label{lem:e8-geometric-normality}
For every field extension \(F/\mathbb C\), the surface
\[
X_F:=X\times_{\mathbb C}\operatorname{Spec}F
\]
is integral and normal, and the reduced support of its singular locus
is the origin.
\end{lemma}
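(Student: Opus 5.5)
The plan is to treat the three assertions (integrality, normality, singular locus) via the standard criteria for hypersurfaces, working over an arbitrary field \(F\supseteq\mathbb C\). Put \(g=x^2+y^3-z^5\), so \(X_F=\Spec F[x,y,z]/(g)\).

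\textbf{Integrality.} It suffices to show that \(g\) is irreducible in \(F[x,y,z]\); since \(F[x,y,z]\) is a UFD, irreducibility makes \((g)\) prime. I would regard \(g\) as a monic quadratic in \(x\) over \(F[y,z]\). If it were reducible, it would have a root \(x=h\in F[y,z]\), forcing \(h^2=z^5-y^3\). This is impossible, for either of two reasons: the total degree of \(h^2\) is even while the top homogeneous part of \(z^5-y^3\) is \(z^5\), of degree \(5\); or, more simply, setting \(y=0\) would give \(h(0,z)^2=z^5\), which is absurd since \(5\) is odd. This argument works over any field of characteristic \(\neq 2\), so in particular over \(F\).

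\textbf{Singular locus.} The Jacobian criterion applies because \(F\) has characteristic \(0\) and is therefore perfect. A point of \(X_F\) (over \(\overline F\)) is singular exactly when \(2x=3y^2=5z^4=0\) and \(g=0\). This forces \(x=y=z=0\), so the reduced singular locus is the single rational point at the origin. The same argument applied over \(\overline F\) shows that the locus is the origin even geometrically.

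\textbf{Normality.} I would use Serre's criterion \(R_1+S_2\). Condition \(S_2\) holds because \(X_F\) is a hypersurface in \(\mathbb A^3_F\), hence Cohen--Macaulay. Condition \(R_1\) holds because \(X_F\) is a surface whose singular locus has codimension \(2\). Together these show that \(X_F\) is normal.

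The only point requiring care, and the one I would check first, is that irreducibility persists after an arbitrary (possibly transcendental) base change. The quadratic-root argument handles this directly, without any appeal to geometric irreducibility results, since it is valid over \(F\) itself and does not require passing to an algebraic closure.
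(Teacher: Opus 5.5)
Your proposal is correct and follows essentially the same route as the paper: irreducibility of \(x^2+y^3-z^5\) by showing \(z^5-y^3\) is not a square, the Jacobian criterion for the singular locus, and Serre's criterion \((R_1)+(S_2)\) for normality. The only difference is in how non-squareness is shown: the paper rules out a square root in \(F(y,z)\) using the odd valuation \(-3\) at \(y=\infty\) and then applies Gauss's lemma, whereas you rule out a root directly in \(F[y,z]\) (e.g.\ by setting \(y=0\)), which suffices because the polynomial is monic in \(x\).
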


\begin{proof}
In the rational function field \(F(z)(y)\), the element \(z^5-y^3\)
has valuation \(-3\) at the place \(y=\infty\), and hence is not a
square. Therefore \(x^2-(z^5-y^3)\) is irreducible in \(F(y,z)[x]\),
and Gauss's lemma gives irreducibility in \(F[y,z][x]\). Thus \(X_F\)
is integral. Its Jacobian ideal is generated by
\(2x,3y^2,-5z^4\), so the reduced singular locus is the origin. The
hypersurface \(X_F\) is Cohen--Macaulay and satisfies \((S_2)\). Its
singular locus has codimension two, so it is regular in codimension
one. Serre's criterion proves normality.
\end{proof}

Let
\[
\pi:Y\longrightarrow X
\]
be the minimal resolution, and write
\[
\pi_{\mathrm{arc}}:=\operatorname{Arc}(\pi):
\operatorname{Arc}(Y)\longrightarrow\operatorname{Arc}(X).
\]
Thus \(Y\) is smooth, \(\pi\) is proper and birational, and \(\pi\)
is an isomorphism over \(X\setminus\{0\}\). For a prime exceptional
curve \(E\subset Y\), the local ring at its generic point is a
discrete valuation ring. If \(u\) is a uniformizer and
\(0\ne f\in\mathbb C(X)\), write
\[
\pi^*f=u^\nu\varepsilon,
\qquad \nu\in\mathbb Z,
\qquad \varepsilon\in\mathcal O_{Y,\eta_E}^{\times},
\]
and define \(\ord_E(f):=\nu\), with \(\ord_E(0):=+\infty\).

Let \(I_E\) be the ideal sheaf of \(E\). For a field-valued arc
\(\widetilde\alpha\) on \(Y\), define
\[
\ord_{\widetilde\alpha}(I_E)
:=\ord_t(\widetilde\alpha^*u),
\]
where \(u\) is a local equation of \(E\) at the center; the value is
\(+\infty\) when the pulled-back ideal is zero. For \(q\geq1\), set
\[
\operatorname{Cont}_Y^q(E)
:=\{\widetilde\alpha\in\operatorname{Arc}(Y):
\ord_{\widetilde\alpha}(I_E)=q\}.
\]
The maximal divisorial set associated with \(q\ord_E\) is
\[
W_X(E,q):=
\left(
\overline{\pi_{\mathrm{arc}}(\operatorname{Cont}_Y^q(E))}
 ^{\,\operatorname{Arc}(X)}
\right)_{\mathrm{red}}
\]
\cite[Definition~2.8]{DEI}. We abbreviate \(W_X(E,q)\) to
\(W(E,q)\) when the ambient surface is clear.

Number the exceptional curves as in
Lemma~\ref{lem:e8-resolution} below and set
\begin{equation}\label{eq:e8-Ci-definition}
C_i:=W_X(E_i,1)\subset\mathcal A
\qquad(i=1,\ldots,8).
\end{equation}
Thus \(C_i\) is a maximal divisorial set, whereas \(Z_i\) is the
higher-coordinate-order cylinder defined above. The inclusion follows
from \(\pi(E_i)=\{0\}\).

For every field extension \(F/\mathbb C\), set
\[
Y_F:=Y\times_{\mathbb C}\operatorname{Spec}F,
\qquad
E_{i,F}:=E_i\times_{\mathbb C}\operatorname{Spec}F,
\]
and let \(\pi_F:Y_F\to X_F\) denote the base change of \(\pi\).

\subsection{The minimal resolution and the eight order vectors}

\begin{lemma}[Resolution graph and total transforms]
\label{lem:e8-resolution}
For $f\in\{x,y,z\}$, let $D_f:=V_X(f)$ be the corresponding
effective principal coordinate divisor, and denote its strict
transform on $Y$ by
\[
\widetilde D_f
:=\overline{\pi^{-1}\bigl(D_f\setminus\{0\}\bigr)}^{\,Y},
\]
where the closure has its reduced induced structure. Thus the symbols
$\widetilde D_x,\widetilde D_y,\widetilde D_z$ denote nonexceptional
strict transforms, not additional exceptional components.

The edges in the exceptional graph of the minimal resolution are
\[
1\! -\! 8,\quad1\! -\! 7,\quad7\! -\! 6,\quad
1\! -\! 5,\quad5\! -\! 4,\quad4\! -\! 3,\quad3\! -\! 2,
\]
and $E_k^2=-2$ for every $k$. After adjoining the strict transforms
of the three coordinate curves, the reduced total transform is an SNC
divisor with augmented graph
\[
\widetilde D_x-E_8-E_1-E_5-E_4-E_3-E_2-\widetilde D_z,
\qquad
E_1-E_7-E_6-\widetilde D_y.
\]
Here an edge means that the corresponding curves meet transversely at
one point; absence of an edge means that they are disjoint.
The corresponding exceptional order vectors are precisely
$v_1,\ldots,v_8$ listed above.
\end{lemma}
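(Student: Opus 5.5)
The plan is to split the lemma into a geometric part (the dual graph and where the three strict transforms meet the exceptional locus) and a numerical part (the order vectors). The numerical part will then follow from linear algebra on the intersection matrix.

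\emph{Geometric part.} Since $(X,0)$ is the $E_8$ Du Val singularity (Lemma~\ref{lem:cDV} and \cite{Reid}), its minimal resolution has an exceptional locus that is a tree of smooth rational $(-2)$-curves with the $E_8$ dual graph. This gives the edges $1-8$, $1-7-6$ and $1-5-4-3-2$, with $E_1$ the trivalent vertex; only the labelling needs to be fixed. To locate $\widetilde D_x,\widetilde D_y,\widetilde D_z$ I will build the resolution explicitly.
\begin{enumerate}
\item Take the weighted blow-up of $\mathbb A^3$ with weights $(15,10,6)$. The strict transform of $X$ meets the exceptional $\mathbb P(15,10,6)$ in the curve $\{x^2+y^3=z^5\}$. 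This curve is the central curve $E_1$, and it carries three cyclic quotient points of orders $2,3,5$, located where $x=0$, $y=0$ and $z=0$ respectively.
\item Resolve these points torically by Hirzebruch--Jung chains of lengths $1,2,4$. This produces the branches $E_8$, $E_7-E_6$ and $E_5-E_4-E_3-E_2$.
\item In each affine orbifold chart, follow the coordinate hyperplanes. The strict transform of $\{x=0\}$ passes through the order-$2$ point and meets the end curve $E_8$ transversally. In the same way $\widetilde D_y$ meets the end of the order-$3$ chain, namely $E_6$, and $\widetilde D_z$ meets the end of the order-$5$ chain, namely $E_2$.
\item Check that these intersections are transverse single points and occur away from the nodes. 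This makes the reduced total transform SNC with the stated augmented graph.
\end{enumerate}
Self-intersections $-2$ come for free, since the resolution is minimal and crepant over a Du Val point.

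\emph{Numerical part.} For $f\in\{x,y,z\}$ write
\[
\pi^*D_f=\widetilde D_f+\sum_k \ord_{E_k}(f)E_k .
\]
Because $\pi^*D_f$ is principal, $\pi^*D_f\cdot E_j=0$ for every $j$. Hence the vector $a^f=(\ord_{E_k}f)_k$ solves
\[
A\,a^f=-e_{j(f)},
\]
where $A$ is the (negative definite, hence invertible) $E_8$ intersection matrix and $j(f)=8,6,2$ is the index of the curve met by $\widetilde D_f$. The solution is therefore unique. It then suffices to verify that the $x$-, $y$- and $z$-columns of $v_1,\ldots,v_8$ satisfy these equations. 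For instance, for $x$:
\begin{itemize}
\item at $E_8$: $-2\cdot 8+15=-1$;
\item at $E_1$: $-2\cdot 15+8+10+12=0$.
\end{itemize}
The $y$- and $z$-columns are checked in the same way, and this also confirms the numbering.

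The main obstacle is the chart bookkeeping in the geometric part: identifying which end of each Hirzebruch--Jung chain the strict transforms $\widetilde D_f$ hit, and verifying transversality there. If the chart computations become unwieldy, I would instead use the linear-algebra constraint as a guide. The vectors $-A^{-1}e_j$ must have first entries $15,10,6$ (the weights of the first blow-up), and this pins down $j(x)=8$, $j(y)=6$, $j(z)=2$. A single local computation at each quotient point then suffices to confirm transversality.
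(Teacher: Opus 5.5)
Your proposal is correct, but it reaches the resolution by a different route from the paper.

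\emph{The paper's route.} It never blows up $\mathbb A^3$. Instead it does the following.
\begin{itemize}
\item It realizes the partial resolution as $\operatorname{Bl}_0\mathbb A^2/G$ for the binary icosahedral group $G$, and identifies this with the normalized $(15,10,6)$-weighted blowup via the Rees algebra and its second Veronese subring.
\item It obtains the $A_1,A_2,A_4$ points as the special orbits with stabilizers of orders $2,3,5$.
\item It places $\widetilde D_x,\widetilde D_y,\widetilde D_z$ at the far chain ends using Pe Pereira's description of the zero sets of the invariants (edge midpoints, face centres, vertices).
\item It computes $\bar E^2=-1/30$ from $q^*\bar E=2E_B$, and from this gets $E_1^2=-2$ and minimality without citing the ADE classification.
\item It obtains the order vectors by linear interpolation along each $(-2)$-chain, and checks $\widetilde D_f\cdot E_k$ with the $2v_k-s_k$ table.
\end{itemize}

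\emph{What your route buys and what it costs.} In your weighted blowup of $\mathbb A^3$, locating the quotient points is an elementary gcd computation in the orbifold charts. For example, the point on $z=0$ has stabilizer $\mu_{\gcd(15,10)}=\mu_5$, and there $\bar E$ and $\widetilde D_z$ are the two axes of a $\frac15(1,-1)$ point. Your uniqueness argument via the negative definite matrix replaces the interpolation. The price is that you import the $E_8$ graph and the $(-2)$ self-intersections from the classification.

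\emph{A point to justify.} ``Minimal and crepant'' is not automatic for the resolution you build. You can justify it in either of two ways:
\begin{itemize}
\item It has eight exceptional curves, as many as the minimal resolution of $E_8$, so nothing can be contracted.
\item Alternatively, derive $E_1^2=-2$ from $\pi^*x\cdot E_1=0$ once the chain orders are known.
\end{itemize}

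\emph{Your fallback is stronger than you claim.} The $E_1$-entries $c_j$ of $-A^{-1}e_j$ are $30,6,12,18,24,10,20,15$ for $j=1,\dots,8$. Hence $\operatorname{ord}_{E_1}(f)=\sum_j(\widetilde D_f\cdot E_j)\,c_j$, with nonnegative integer intersection numbers. The values $15,10,6$ then force the intersection vectors to be $e_8,e_6,e_2$ respectively. This gives the single transverse intersection with the correct end curve directly, with no local computation at the quotient points at all.
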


\begin{proof}
Let $G\subset\mathrm{SL}_2(\mathbb C)$ be the binary icosahedral
group, let $A=\mathbb C[u,v]$, and let $\mathfrak m=(u,v)$. The
classical invariant-theoretic presentation gives
\[
A^G\simeq
\mathbb C[x,y,z]/(x^2+y^3-z^5),
\qquad \deg(x,y,z)=(30,20,12),
\]
after rescaling the generators by nonzero constants. Blow up the
origin of $\mathbb A^2$ and take the quotient by $G$. The resulting
partial resolution $B/G$, where $B=\operatorname{Bl}_0\mathbb A^2$,
has exactly three cyclic quotient singularities; the quotient
construction and the three special orbits are described in
\cite[Sections~3.1--3.2]{PePereiraQuotient}.

For completeness, we identify this quotient with a normalized
weighted blowup. Consider the Rees algebra
\[
\mathcal R=\bigoplus_{n\geq0}\mathfrak m^nT^n
 \simeq \mathbb C[u,v,U,V]/(uV-vU),
\]
where $U=uT$ and $V=vT$. The ring on the right is an integral
three-dimensional hypersurface. Its Jacobian ideal is
$(V,-U,-v,u)$, hence its singular locus is the origin, of codimension
three. The conditions $(R_1)$ and $(S_2)$ show that $\mathcal R$ is
normal. The standard linearized finite-group quotient of
$\operatorname{Proj}$ gives
\[
B/G\simeq\operatorname{Proj}_{A^G}\mathcal R^G,
\]
and the invariant ring $\mathcal R^G$ is normal.

Pass to the second Veronese subring, regrading its component of old
degree $2q$ as degree $q$ and denoting the old symbol $T^{2q}$ by
$S^q$. Then
\[
(\mathcal R^G)^{(2)}
=\bigoplus_{q\geq0}(\mathfrak m^{2q})^G S^q
=\bigoplus_{q\geq0}I_qS^q.
\]
Indeed, if $f\in A^G$ is decomposed by ordinary degree as
$f=\sum_df_d$, then
\[
\begin{aligned}
f\in(\mathfrak m^{2q})^G
&\Longleftrightarrow f_d=0\quad(d<2q)\\
&\Longleftrightarrow
\min\{d/2:f_d\ne0\}\geq q\\
&\Longleftrightarrow
\operatorname{wt}_{(15,10,6)}(f)\geq q.
\end{aligned}
\]
Here $\min\emptyset=\operatorname{wt}(0)=+\infty$. The three
homogeneous generators have ordinary degrees $30,20,12$, and their
homogeneous relation has degree $60$; dividing ordinary degree by two
therefore gives the weight $(15,10,6)$. Consequently,
\[
I_q=\{f\in A^G:\operatorname{wt}_{(15,10,6)}(f)\geq q\}.
\]
The second Veronese subring is the invariant ring for the grading
action of $\mu_2$, so it is normal. Since taking a Veronese subring
does not alter $\operatorname{Proj}$, $B/G$ is the normalization of
the weighted blowup defined by this filtration.

At a general direction of the exceptional projective line the
stabilizer is $\{\pm I\}$. In blowup coordinates, $-I$ acts by
\[
(s,w)\longmapsto(-s,w).
\]
The quotient normal parameter is therefore $r=s^2$, and the
ramification index is two. The three homogeneous generators of
degrees $30,20,12$ vanish to these same orders along the exceptional
curve of the blowup. Their divisor orders along the central
exceptional curve $E_1$ of the quotient are consequently
\[
v_1=(15,10,6).
\]

We now determine the three arms and the positions of the strict
transforms of the coordinate curves. The three special orbits on the
projective line have effective stabilizer orders $m=2,3,5$. Near a
corresponding special direction, a generator of the full stabilizer
may be written as $\operatorname{diag}(\zeta,\zeta^{-1})$, where
$\zeta$ has order $2m$. In blowup coordinates $u=s$, $v=sw$, it acts
as
\[
(s,w)\longmapsto(\zeta s,\zeta^{-2}w).
\]
After quotienting by the central subgroup $\{\pm I\}$ and putting
$r=s^2$, the induced order-$m$ action is
\[
(r,w)\longmapsto(\epsilon r,\epsilon^{-1}w),
\qquad \epsilon=\zeta^2.
\]
Thus the local quotient is of type
$\frac1m(1,-1)=A_{m-1}$, whose minimal resolution is a chain of
$m-1$ curves of self-intersection $-2$. The strict transforms of the
axes $r=0$ and $w=0$ meet the two opposite ends of this chain.

One may see all these assertions directly in local coordinates. Take
\[
\mathsf A=r^m,\qquad \mathsf B=w^m,\qquad \mathsf C=rw,
\qquad \mathsf A\mathsf B=\mathsf C^m.
\]
The $m$ standard affine charts of the resolution of $A_{m-1}$ have
coordinates $(a_j,b_j)$, $j=0,\ldots,m-1$, and map by
\begin{equation}\label{eq:e8-cyclic-charts}
\mathsf A=a_j^{m-j}b_j^{m-j-1},\qquad
\mathsf B=a_j^jb_j^{j+1},\qquad \mathsf C=a_jb_j.
\end{equation}
These formulas satisfy $\mathsf A\mathsf B=\mathsf C^m$. Adjacent
charts are glued on $a_j\ne0$ by
\[
a_{j+1}=a_j^2b_j,\qquad b_{j+1}=a_j^{-1},
\]
and substitution leaves all three expressions in
\eqref{eq:e8-cyclic-charts} unchanged. The $(j+1)$-st component is
obtained by gluing $b_j=0$ in the $j$-th chart to $a_{j+1}=0$ in the
$(j+1)$-st chart, for $j=0,\ldots,m-2$. Its curve coordinate is glued
by $b_{j+1}=1/a_j$, and the transition
$a_{j+1}=a_j^2b_j$ for the normal coordinate gives normal bundle
$\mathcal O_{\mathbb P^1}(-2)$. In the zeroth chart the strict
transform of $r=0$ is $a_0=0$ and meets the first component $b_0=0$
transversely. In the last chart the strict transform of $w=0$ is
$b_{m-1}=0$ and meets the last component $a_{m-1}=0$ transversely.
In each intermediate chart, the adjacent chain components are the two
coordinate axes; in particular, there are no triple points.

In the notation $x=E$, $y=F$, $z=V$ of Pe Pereira, the vanishing
directions of the three basic invariants are respectively the edge
midpoint, face center, and vertex orbits, of cardinalities $30,20,12$,
and each linear factor occurs once
\cite[Section~3.1 and Figure~1]{PePereiraQuotient}. Their effective
stabilizer orders are $2,3,5$. Hence the strict transforms of the
coordinate curves $V(x),V(y),V(z)$ meet, respectively, the ends away
from the central curve of the $A_1,A_2,A_4$ chains.

It remains to compute the self-intersection of the central curve. Let
$E_B\subset B$ be the exceptional curve of the blowup, let
$\bar E\subset B/G$ be its image, and let $q:B\to B/G$ be the quotient
map. We have $E_B^2=-1$, $\deg q=|G|=120$, and the generic
ramification index along $E_B$ is two, so $q^*\bar E=2E_B$. The curve
$\bar E$ is $\mathbb Q$-Cartier: in the local $A_{m-1}$ chart,
$\mathsf A=r^m$ has divisor $m\bar E$. Applying the projection formula
for intersections after taking a Cartier multiple yields
\[
120\bar E^2=(q^*\bar E)^2=4E_B^2=-4,
\qquad \bar E^2=-\frac1{30}.
\]
Let $\rho:Y\to B/G$ resolve the three cyclic quotient points, and on
the arm corresponding to $m$ write the components from the central curve
outward as $F_{m,1},\ldots,F_{m,m-1}$. Put
\[
\rho^*\bar E=E_1+\sum_{m,j}c_{m,j}F_{m,j}.
\]
The pullback has intersection zero with every curve contracted by
$\rho$, and therefore
\[
2c_{m,j}=c_{m,j-1}+c_{m,j+1},\qquad
c_{m,0}=1,\quad c_{m,m}=0.
\]
Thus $c_{m,j}=(m-j)/m$. Intersecting with $E_1$ and applying the
projection formula gives
\[
\begin{aligned}
\bar E^2=(\rho^*\bar E)\cdot E_1
&=E_1^2+\frac12+\frac23+\frac45,\\
E_1^2&=-\frac1{30}-\frac{15+20+24}{30}=-2.
\end{aligned}
\]
The central curve is
$\mathbb P^1/(G/\{\pm I\})\simeq\mathbb P^1$, while the three arms
are smooth rational $(-2)$-chains by
\eqref{eq:e8-cyclic-charts}. Hence this resolution contains no
exceptional $(-1)$-curve and is minimal.

Let
\[
e_x=(1,0,0),\qquad e_y=(0,1,0),\qquad e_z=(0,0,1)
\]
be the multiplicity vectors attached to the three coordinate-curve
arrows. On each $(-2)$-chain, the intersection of a principal divisor
with an intermediate component is zero, giving the recurrence
\[
2v_k=v_{\mathrm{left}}+v_{\mathrm{right}}.
\]
On an arm of length $m-1$, write, from the central curve to its arrow,
\[
w_0=v_1,\ w_1,\ldots,w_{m-1},\ w_m=e_f.
\]
The recurrence says that
$w_{j+1}-w_j=w_j-w_{j-1}$, so the difference is constant and
\[
w_j=v_1+\frac jm(e_f-v_1)
=\frac{(m-j)v_1+je_f}{m}\qquad(0\leq j\leq m).
\]
For $(m,f)=(2,x),(3,y),(5,z)$ this gives, respectively,
\[
v_8=\frac{v_1+e_x}{2}=(8,5,3),
\]
\[
v_7=\frac{2v_1+e_y}{3}=(10,7,4),
\qquad
v_6=\frac{v_1+2e_y}{3}=(5,4,2),
\]
and
\[
\begin{aligned}
v_5&=\frac{4v_1+e_z}{5}=(12,8,5),&
v_4&=\frac{3v_1+2e_z}{5}=(9,6,4),\\
v_3&=\frac{2v_1+3e_z}{5}=(6,4,3),&
v_2&=\frac{v_1+4e_z}{5}=(3,2,2).
\end{aligned}
\]

On setting $x,y,z$ equal to zero in turn, the coordinate divisors are
defined respectively by $y^3-z^5$, $x^2-z^5$, and $x^2+y^3$. These
binomials have coprime exponent pairs, so the coordinate curves are
reduced and irreducible;
their strict transforms are prime divisors and occur with coefficient
one in the corresponding total transforms. Hence, for $f=x,y,z$,
we may write
\[
\operatorname{div}_Y(f\circ\pi)
=\widetilde D_f+\sum_k\ord_{E_k}(f)E_k.
\]
Its intersection with $E_k$ is zero, whence
\[
\widetilde D_f\cdot E_k
=2\ord_{E_k}(f)-\sum_{\ell\sim k}\ord_{E_\ell}(f).
\]
If $s_k=\sum_{\ell\sim k}v_\ell$, the calculation is
\[
\begin{array}{c|c|c|c}
k&2v_k&s_k&2v_k-s_k\\ \midrule
1&(30,20,12)&(30,20,12)&(0,0,0)\\
2&(6,4,4)&(6,4,3)&(0,0,1)\\
3&(12,8,6)&(12,8,6)&(0,0,0)\\
4&(18,12,8)&(18,12,8)&(0,0,0)\\
5&(24,16,10)&(24,16,10)&(0,0,0)\\
6&(10,8,4)&(10,7,4)&(0,1,0)\\
7&(20,14,8)&(20,14,8)&(0,0,0)\\
8&(16,10,6)&(15,10,6)&(1,0,0).
\end{array}
\]
The three coordinates in the last column are
$\widetilde D_x\cdot E_k$, $\widetilde D_y\cdot E_k$, and
$\widetilde D_z\cdot E_k$. Thus
\[
\widetilde D_x\cdot E_k=\delta_{k8},\qquad
\widetilde D_y\cdot E_k=\delta_{k6},\qquad
\widetilde D_z\cdot E_k=\delta_{k2}.
\]
Intersection multiplicity one shows that each strict transform is smooth and
transverse at the indicated endpoint. The other intersection numbers
are zero and rule out passage through an intersection of exceptional
curves. This proves the asserted augmented SNC graph.
\end{proof}

\begin{lemma}[Coordinate orders on maximal divisorial sets]
\label{lem:e8-generic-orders}
For every $i=1,\ldots,8$,
\[
C_i\subset Z_i,
\]
the set \(C_i\) is irreducible, and the coordinate-order vector of
its generic arc is
exactly $v_i$. Thus, if $\eta_i$ denotes the generic point of $C_i$,
then
\[
\bigl(\ord_{\eta_i}(x),\ord_{\eta_i}(y),\ord_{\eta_i}(z)\bigr)=v_i.
\]
\end{lemma}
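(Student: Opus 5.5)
We reduce everything to the contact set \(\operatorname{Cont}_Y^1(E_i)\), which is irreducible, and then read off coordinate orders from the total transforms computed in Lemma~\ref{lem:e8-resolution}.

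First, irreducibility. Since \(Y\) is smooth and \(E_i\) is a smooth prime divisor, \(\operatorname{Cont}_Y^1(E_i)\) is a cylinder. Above any point it is locally a trivial fibration over \(J_1\)-data with \(\ord(u)=1\) exactly, with affine-space fibres over the irreducible curve \(E_i\). Hence it is irreducible. Its image under \(\pi_{\mathrm{arc}}\) is irreducible, and so is the closure \(C_i\). The generic point \(\eta_i\) of \(C_i\) is the image of the generic point \(\widetilde\eta_i\) of \(\operatorname{Cont}_Y^1(E_i)\), because a dominant continuous map sends generic points to generic points. The centre of each arc in \(\operatorname{Cont}^1_Y(E_i)\) lies on \(E_i\), which maps to \(0\), so the image lies in \(\mathcal A\).

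Second, the orders. For \(f\in\{x,y,z\}\), Lemma~\ref{lem:e8-resolution} gives
\[
\operatorname{div}_Y(f\circ\pi)=\widetilde D_f+\sum_k\ord_{E_k}(f)E_k .
\]
Hence, for any arc \(\widetilde\alpha\) on \(Y\),
\[
\ord_{\pi\circ\widetilde\alpha}(f)=\ord_{\widetilde\alpha}(\widetilde D_f)+\sum_k\ord_{E_k}(f)\,\ord_{\widetilde\alpha}(E_k).
\]
This is additivity of orders for a Cartier divisor, since all these divisors are Cartier on smooth \(Y\).

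At the generic point \(\widetilde\eta_i\), the centre is the generic point of \(E_i\). This is a nonempty open condition on the cylinder: the locus where the centre avoids the finitely many points \(E_i\cap E_k\) (\(k\ne i\)) and \(E_i\cap\widetilde D_f\) is open and dense. At such a centre only \(E_i\) passes, so \(\ord(E_k)=0\) for \(k\ne i\), \(\ord(\widetilde D_f)=0\), and \(\ord(E_i)=1\). Thus
\[
\ord_{\eta_i}(f)=\ord_{E_i}(f),
\]
and by the lemma the vector \((\ord_{E_i}(x),\ord_{E_i}(y),\ord_{E_i}(z))\) is \(v_i\).

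Third, the inclusion \(C_i\subset Z_i\). The formula above shows that every arc of \(\operatorname{Cont}^1_Y(E_i)\) satisfies \(\ord(f)\ge \ord_{E_i}(f)\), with all contributions nonnegative. So \(\pi_{\mathrm{arc}}(\operatorname{Cont}^1_Y(E_i))\subset Z_i\). Since \(Z_i\) is closed (a closed cylinder, as noted in Subsection~\ref{subsec:e8-setup}) and reduced, the closure \(C_i\) lies in \(Z_i\).

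The main obstacle is justifying the generic-point passage. One needs to know that the generic point of the image closure is the image of the generic point, and that order computations at a non-closed (field-valued) point commute with base change. The latter is where Lemma~\ref{lem:e8-geometric-normality} and the base changes \(Y_F\), \(E_{i,F}\) enter: we apply the order formula to the \(\kappa(\widetilde\eta_i)\)-valued arc on \(Y_F\), whose centre is the generic point of \(E_{i,F}\).
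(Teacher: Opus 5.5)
Your proposal is correct and follows essentially the paper's route. You use irreducibility of $\operatorname{Cont}^1_Y(E_i)$, the effective total transforms to get $\ord(f)\ge\ord_{E_i}(f)$ and hence $C_i\subset Z_i$, and exactness for arcs centred on $E_i$ away from the other boundary components, transported to $\eta_i$. The paper makes the last step by showing a dense subfamily avoids the closed cylinders $\{\ord(f)\ge\ord_{E_i}(f)+1\}$, whereas you push forward the generic point of the contact locus; this amounts to the same thing.

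There is one slip in your last paragraph, though it does not affect the argument. No base change to $Y_F$ and no use of Lemma~\ref{lem:e8-geometric-normality} is needed. Moreover, the centre of the $\kappa(\widetilde\eta_i)$-valued arc on $Y_{\kappa(\widetilde\eta_i)}$ is a rational closed point lying over the generic point of $E_i$, not the generic point of $E_{i,F}$. The computation is simply done through the local homomorphism $\mathcal O_{Y,\xi}\to\kappa(\widetilde\eta_i)[[t]]$ at the generic point $\xi$ of $E_i$. It then uses only that $\ord_t$ is unchanged under the field extension $\kappa(\eta_i)\subset\kappa(\widetilde\eta_i)$.
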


\begin{proof}
Set
\[
E_i^\circ:=E_i\setminus\bigcup_{k\ne i}E_k
\]
and let
$\tau_{\infty,0}^Y:\operatorname{Arc}(Y)\to Y$ be the center map.
Define
\[
\mathcal T_i
:=\operatorname{Cont}^1_Y(E_i)
\cap(\tau_{\infty,0}^Y)^{-1}(E_i^\circ).
\]
Thus $\mathcal T_i$ consists exactly of the arcs centered on
$E_i^\circ$ and having contact order one with $E_i$. Since $Y$ is
smooth and $E_i^\circ$ is an irreducible smooth curve,
$\mathcal T_i$ is a nonempty open subcylinder of the irreducible
cylinder $\operatorname{Cont}^1_Y(E_i)$ and is therefore dense in it.
By continuity of $\pi_{\mathrm{arc}}$ and the definition of $W(E_i,1)$,
\[
C_i=\overline{\pi_{\mathrm{arc}}(\mathcal T_i)}^{\,\mathrm{Zar}}.
\]
Since \(\mathcal T_i\) is irreducible, its image and the closure of
that image are irreducible; hence \(C_i\) is irreducible.

For $f=x,y,z$, Lemma~\ref{lem:e8-resolution} gives the effective total
transform
\[
\operatorname{div}_Y(f\circ\pi)
=\widetilde D_f+\sum_k\ord_{E_k}(f)E_k.
\]
Take $\widetilde\alpha\in\mathcal T_i$, put
$L=\kappa(\widetilde\alpha)$, and regard it as an arc
\[
\widetilde\alpha:\Spec L[[t]]\longrightarrow Y.
\]
Let
\[
\alpha_L:=\pi\circ\widetilde\alpha:\Spec L[[t]]\longrightarrow X,
\qquad
\alpha:=\pi_{\mathrm{arc}}(\widetilde\alpha)
\in\operatorname{Arc}(X).
\]
The residue-field embedding $\kappa(\alpha)\hookrightarrow L$ extends
the residue-field arc represented by $\alpha$ to $\alpha_L$; field
extension does not change $t$-adic order. Near the center of the lift,
let $u=0$ be a local equation for $E_i$. Effectivity of the total
transform gives, for $f=x,y,z$,
\[
f\circ\pi=u^{\ord_{E_i}(f)}g_f,
\qquad g_f\in\mathcal O_{Y,\widetilde\alpha(0)}.
\]
Since $\widetilde\alpha\in\mathcal T_i$, one has
$\ord_t\widetilde\alpha^*u=1$, while the pullback of the regular
function $g_f$ has order in $\mathbb Z_{\geq0}\cup\{+\infty\}$. Hence
\[
\ord_\alpha(f)
=\ord_{\alpha_L}(f)
=\ord_t\widetilde\alpha^*(f\circ\pi)
\geq\ord_{E_i}(f)
\qquad(f=x,y,z).
\]
Thus $\pi_{\mathrm{arc}}(\mathcal T_i)\subset Z_i$. Since $Z_i$ is closed,
taking Zariski closures gives $C_i\subset Z_i$.

Now put
\[
U_i:=E_i^\circ\setminus
\bigl(\widetilde D_x\cup\widetilde D_y\cup\widetilde D_z\bigr).
\]
The augmented SNC graph in Lemma~\ref{lem:e8-resolution} shows that
$U_i$ is a nonempty Zariski-open subset of $E_i^\circ$. If
\[
c_i:\mathcal T_i\longrightarrow E_i^\circ,
\qquad \widetilde\alpha\longmapsto\widetilde\alpha(0)
\]
is the center map, then $c_i^{-1}(U_i)$ is a nonempty dense open
cylinder in $\mathcal T_i$. By continuity,
$\pi_{\mathrm{arc}}(c_i^{-1}(U_i))$ is dense in
$C_i=\overline{\pi_{\mathrm{arc}}(\mathcal T_i)}$.

Near any point of $U_i$, taking $u=0$ as a local equation of $E_i$,
we have
\[
f\circ\pi=u^{\ord_{E_i}(f)}\varepsilon_f,
\qquad \varepsilon_f\in\mathcal O_Y^\times
\]
for all $f=x,y,z$. On $c_i^{-1}(U_i)$, transversality gives
$\ord_t\widetilde\alpha^*u=1$, and
$\widetilde\alpha^*\varepsilon_f$ is a unit. Consequently,
\[
\ord_\alpha(f)=\ord_{E_i}(f).
\]
For each $f=x,y,z$, the condition
\[
\ord_\alpha(f)\geq\ord_{E_i}(f)+1
\]
on $\alpha\in\operatorname{Arc}(X)$ defines a closed cylinder.
The dense family just constructed avoids each of these three closed
cylinders; therefore the generic point of $C_i$ belongs to none of
them. Together with the lower bounds already proved, this yields
\[
\bigl(\ord_{\eta_i}(x),\ord_{\eta_i}(y),\ord_{\eta_i}(z)\bigr)
=\bigl(\ord_{E_i}(x),\ord_{E_i}(y),\ord_{E_i}(z)\bigr)=v_i.
\]

We shall also need the relation with Nash arc families. Let
\[
\mathcal H_i=(\tau_{\infty,0}^Y)^{-1}(E_i),\qquad
N_i=\{\alpha\in\mathcal A:\alpha\ne0_\infty,
\ \widetilde\alpha(0)\in E_i\}.
\]
Since $Y$ is smooth, for every $m$ the inverse image of $E_i$ under
$\tau_{m,0}^Y:J_m(Y)\to Y$ is locally an affine-space bundle over
$E_i$; consequently $\mathcal H_i$ is irreducible
\cite[Corollary~2.11]{EM}. The subset $\mathcal T_i$ is a
nonempty open cylinder in $\mathcal H_i$:
its center avoids the other exceptional components and the first
coefficient of a local equation of $E_i$ is nonzero. Hence
$\mathcal H_i=\overline{\mathcal T_i}^{\,\operatorname{Arc}(Y)}$
and continuity
gives
\[
\pi_{\mathrm{arc}}(\mathcal H_i)
\subset\overline{\pi_{\mathrm{arc}}(\mathcal T_i)}=C_i.
\]
Conversely,
$\pi_{\mathrm{arc}}(\mathcal T_i)\subset N_i\subset
\pi_{\mathrm{arc}}(\mathcal H_i)$, so
\begin{equation}\label{eq:e8-nash-identification}
C_i=\overline{N_i}^{\,\mathrm{Zar}}.
\end{equation}
After any extension of the base field, $Y_F$ remains smooth and
$E_{i,F}\simeq\mathbb P^1_F$ remains geometrically irreducible. The
nonempty open cylinders above remain nonempty. The same proof thus
applies to $Y_F$ and $E_{i,F}$.
\end{proof}

\subsection{Exact order implies transversality}

\begin{lemma}[Transversality]
\label{lem:e8-transversality}
Let $F/\mathbb C$ be any field extension and let
$\alpha:\Spec F[[t]]\to X_F$ belong to $W_i(F)$. Its unique lift
$\widetilde\alpha:\Spec F[[t]]\to Y_F$ meets $E_{i,F}$ transversely at
a smooth point of the total exceptional divisor. Equivalently,
\[
\widetilde\alpha(0)\in
E_{i,F}\setminus\bigcup_{k\ne i}E_{k,F},
\qquad
\ord_t\widetilde\alpha^*I_{E_{i,F}}=1.
\]
For every further field extension $L/F$, the lift, the exceptional
component containing its center, and this contact order are compatible
with coefficientwise extension of scalars.
\end{lemma}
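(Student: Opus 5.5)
The plan is to compare the exact order vector $v_i$ of $\alpha$ with the order vectors that arise from the possible positions and contact orders of the lift $\widetilde\alpha$.

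\emph{Existence of the lift.} Since $\alpha\in W_i(F)$ has all three coordinates of finite order, it is not the zero arc. Hence its generic point $\Spec F((t))\to X_F$ lands in $X_F\setminus\{0\}$, where $\pi_F$ is an isomorphism. The valuative criterion of properness for $\pi_F$ then gives a unique lift $\widetilde\alpha$, and its center lies on $\pi_F^{-1}(0)=\bigcup_k E_{k,F}$.

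\emph{The order formula.} Lemma~\ref{lem:e8-resolution} gives the total transforms $\operatorname{div}_Y(f\circ\pi)=\widetilde D_f+\sum_k\ord_{E_k}(f)E_k$ inside an SNC divisor. Let $P=\widetilde\alpha(0)$. Choose local coordinates $(u,s)$ at $P$ such that the components through $P$ are among $\{u=0\}$ and $\{s=0\}$; this uses the augmented graph of Lemma~\ref{lem:e8-resolution}. Put $e=\ord_t\widetilde\alpha^*u$ and $e'=\ord_t\widetilde\alpha^*s$. Writing $w(D)$ for the order vector of a component $D$, with $w(E_k)=v_k$, $w(\widetilde D_x)=e_x$, $w(\widetilde D_y)=e_y$ and $w(\widetilde D_z)=e_z$, one obtains
\[
\ord\alpha=e\,w(D)+e'\,w(D'),
\]
where $D$ and $D'$ are the components through $P$ (possibly only one). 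We have $e\ge1$ for some exceptional component, and $e'\ge0$, with $e'\ge1$ exactly when $P\in D'$.

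\emph{Combinatorial case analysis.} We must show that $v_i=\sum e_jw_j$ forces a single exceptional term $w=v_i$ with coefficient $1$. The constraint is that the terms correspond to at most two components meeting at $P$, at least one of them exceptional, each with positive coefficient.
\begin{itemize}
\item If the only term is an exceptional $E_k$ with coefficient $e\ge1$, then $ev_k=v_i$. Each $v_k$ is primitive: $(15,10,6)$ has gcd $1$, and the others can be checked directly. Primitivity forces $e=1$ and $k=i$, because the $v_k$ are distinct.
\item If $P$ is a crossing point $E_k\cap E_\ell$ or $E_k\cap\widetilde D_f$, then $v_i=ev_k+e'w_\ell$ with $e,e'\ge1$. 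We must rule this out. For the seven edges of the augmented graph this is a finite check against the table of $v_1,\dots,v_8$. The useful observation is that every $v_j$ satisfies $v_j\le(15,10,6)$, with $v_1$ maximal. For instance, $v_k+v_\ell$ already has first coordinate at least $v_1$'s neighbour sums, and comparing the $z$-coordinates $\{6,2,3,4,5,2,4,3\}$ and sums quickly excludes most cases.
\end{itemize}
I would organize the crossing case by noting that every exceptional pair sum exceeds $(15,10,6)$ coordinatewise in at least one entry unless both vectors are small. I would then check the remaining small cases directly, e.g.\ $v_2+v_3=(9,6,5)\neq v_4$ and $v_2+e_z=(3,2,3)$, which is not in the list.

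\emph{Compatibility with scalar extension.} The lift commutes with base change $L/F$, by uniqueness of lifts and flatness of $Y_L=Y_F\times_F L$. The component containing the center is determined by the vanishing of local equations, which is stable under field extension. The $t$-adic order is unchanged by coefficientwise extension.

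\emph{Main obstacle.} I expect the main obstacle to be making the order formula rigorous at crossing points, and in particular establishing that the strict transforms $\widetilde D_f$ enter with exactly the vectors $e_f$ and never pass through exceptional crossings. The augmented SNC statement of Lemma~\ref{lem:e8-resolution} supplies exactly this. After that, the remainder is the finite check of the edge sums above.
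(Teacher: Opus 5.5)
Your strategy is the same as the paper's: the unique lift via the valuative criterion, the monomial formula $\ord\alpha=a\,w(D)+b\,w(D')$ read off from the augmented SNC graph, primitivity of the $v_k$ when the center lies on a single component, and a finite comparison at crossing points. Your scalar-extension step is also fine; uniqueness of the lift is all that is needed, and flatness plays no role. The gap is that the finite comparison, which is the real content of the lemma, is not carried out, and the checks you indicate only use coefficients equal to one. Your observation that $v_k+v_\ell$ exceeds $v_1=(15,10,6)$ in some coordinate does settle six of the exceptional edges. For that you must add that $av_k+bv_\ell\ge v_k+v_\ell$ for $a,b\ge1$ and that $v_1$ dominates every $v_j$. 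The observation fails, however, at the edge $E_2$--$E_3$ and at all three arrow points; note that the augmented graph has ten edges, not seven. There $v_2+v_3=(9,6,5)$, $v_8+e_x=(9,5,3)$, $v_6+e_y=(5,5,2)$ and $v_2+e_z=(3,2,3)$ are all dominated by $v_1$, so larger multiplicities cannot be dismissed by monotonicity.

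\emph{The $E_2$--$E_3$ edge.} Write $av_3+bv_2=(3c,2c,c+a+b)$ with $c=2a+b$. This agrees in the first two coordinates with $v_4$, $v_5$, $v_1$ for $c=3,4,5$. Besides your case $(9,6,5)$, you must also exclude $v_3+2v_2=(12,8,7)$ against $v_5$, and $2v_3+v_2=(15,10,8)$ and $v_3+3v_2=(15,10,9)$ against $v_1$; only the $z$-coordinate rules these out.

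\emph{The arrow points.} The vector $(3a,2a,2a+b)$ matches the first two coordinates of $v_2,v_3,v_4,v_5,v_1$ for $a=1,\dots,5$. The vectors $(8a+b,5a,3a)$ and $(5a,4a+b,2a)$, which you do not mention, match two coordinates of $v_8,v_1$ for $a=1,2$ and of $v_6,v_7,v_1$ for $a=1,2,3$, respectively.

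The paper handles exactly these cases. At $E_2$--$E_3$ it notes that only $v_5$ and $v_1$ dominate $(9,6,5)$, and then solves $3a+2b=5$ and $3a+2b=6$ respectively. At each arrow it solves the two $b$-free equations for $(i,a)$ and checks that the remaining coordinate forces $b\le0$, contradicting $b\ge1$. Adding this case table makes your argument complete.
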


\begin{proof}
Smoothness is preserved by base change, so $Y_F$ is a smooth surface.
Properness, separatedness, and the property of being an isomorphism
are also preserved by base change; hence
$\pi_F:Y_F\to X_F$ is proper and separated and is an isomorphism away
from the origin. All three coordinate orders of $\alpha$ are finite,
so the generic point of $\alpha$ lies in $X_F\setminus\{0\}$ and has a
unique lift there. Because $F[[t]]$ is a discrete valuation ring, the
valuative criterion for properness extends this lift to
$\Spec F[[t]]$; separatedness makes the extension unique.

Put $P_F=\widetilde\alpha(0)$. This is an $F$-rational closed point of
$Y_F$, although its image in $Y$ need not be closed. We therefore make
all local calculations on $Y_F$. The local ring
$\mathcal O_{Y_F,P_F}$ is a two-dimensional regular local ring with
residue field $F$. The augmented boundary of
Lemma~\ref{lem:e8-resolution} remains SNC after base change: every
component remains smooth, every indicated intersection remains
transverse, and no new intersections appear. Thus one may choose
regular parameters $u,v$ in this local ring so that the boundary
components through $P_F$ are either $u=0$, or $u=0$ and $v=0$.
The effective total-transform identities are preserved by base
change, and therefore, for $f=x,y,z$,
\[
f\circ\pi_F=u^{A_f}v^{B_f}\varepsilon_f,
\qquad \varepsilon_f\in\mathcal O_{Y_F,P_F}^{\times},
\]
where $B_f=0$ if only one boundary component passes through $P_F$.
The local homomorphism to $F[[t]]$ sends units to units. The pullback
of every local equation of a boundary component has finite positive
order because none of the three coordinate series is identically
zero. Write
\[
a=\ord_t\widetilde\alpha^*u,
\]
and, if there is a second boundary component, write
$b=\ord_t\widetilde\alpha^*v$; otherwise set $b=0$. Then
\[
\ord_\alpha(f)=aA_f+bB_f.
\]

The augmented graph now shows that the order vector of $\alpha$ must
have one of the following three forms. In the first form $a$ is a
positive integer, and in the other two both $a$ and $b$ are positive
integers:
\[
av_k,
\qquad
av_k+bv_\ell\quad(E_k\cap E_\ell\ne\emptyset),
\]
or
\[
av_8+b(1,0,0),\qquad
av_6+b(0,1,0),\qquad
av_2+b(0,0,1).
\]
The vectors $v_1,\ldots,v_8$ are pairwise distinct and primitive:
\[
\gcd(p_i,q_i,r_i)=1\qquad(i=1,\ldots,8).
\]
If $av_k=v_i$, the gcd of the three coordinates gives $a=1$, and
then pairwise distinctness gives $k=i$.

Suppose next that the center lies at an intersection
$E_k\cap E_\ell$. Since $a,b\geq1$,
\[
av_k+bv_\ell\geq v_k+v_\ell
\]
coordinatewise. With the exception of the edge $E_2-E_3$, the sums of
the endpoint vectors are
\[
\begin{array}{c|c}
\text{edge}&v_k+v_\ell\\ \hline
E_1-E_8&(23,15,9)\\
E_1-E_7&(25,17,10)\\
E_7-E_6&(15,11,6)\\
E_1-E_5&(27,18,11)\\
E_5-E_4&(21,14,9)\\
E_4-E_3&(15,10,7).
\end{array}
\]
Each of these sums has at least one coordinate strictly larger than
the corresponding coordinate of $v_1=(15,10,6)$, while $v_1$
dominates the other seven vectors coordinatewise. None can therefore
equal a $v_i$. On the remaining edge $E_2-E_3$,
\[
av_3+bv_2=(6a+3b,4a+2b,3a+2b).
\]
For $a,b\geq1$ this vector dominates
$v_2+v_3=(9,6,5)$. Among the eight vectors, only
$v_5=(12,8,5)$ and $v_1=(15,10,6)$ dominate $(9,6,5)$. Equality with
$v_5$ would give $3a+2b=5$, hence $a=b=1$, but then the first
coordinate is $9$, not $12$. Equality with $v_1$ would give
$3a+2b=6$ and hence $a\leq1$; substituting $a=1$ gives
$b=3/2$, contrary to $b\in\mathbb Z_{>0}$. Thus no intersection of
exceptional components yields any $v_i$.

At the three arrow points the respective order vectors are
\[
(8a+b,5a,3a),\qquad
(5a,4a+b,2a),\qquad
(3a,2a,2a+b).
\]
The two equations not containing $b$ first determine $i$ and the
positive integer $a$; the remaining coordinate then determines $b$.
The complete list of candidates is
\[
\begin{array}{c|c|c|r}
\text{arrow}&\text{equations not containing $b$}&(i,a)&b\\ \midrule
\widetilde D_x&5a=q_i,\ 3a=r_i&(1,2)&15-8\cdot2=-1\\
&& (8,1)&8-8=0\\ \midrule
\widetilde D_y&5a=p_i,\ 2a=r_i&(1,3)&10-4\cdot3=-2\\
&& (6,1)&4-4=0\\
&& (7,2)&7-4\cdot2=-1\\ \midrule
\widetilde D_z&3a=p_i,\ 2a=q_i&(1,5)&6-2\cdot5=-4\\
&& (2,1)&2-2=0\\
&& (3,2)&3-2\cdot2=-1\\
&& (4,3)&4-2\cdot3=-2\\
&& (5,4)&5-2\cdot4=-3.
\end{array}
\]
This exhausts the coordinate comparisons with all eight vectors, and
every candidate has $b\leq0$, contradicting $b\geq1$ at an arrow
point. The only possible case is therefore $a=1$ and $k=i$ in the
first form. This proves that $P_F$ lies on no other exceptional
component and that the contact order with $E_{i,F}$ is one.

Finally, let $L/F$ be any further extension, with embedding
$\iota:F\hookrightarrow L$. The coefficientwise embedding
$F[[t]]\hookrightarrow L[[t]]$ preserves the first nonzero
coefficient, and hence
\[
\ord_t\Bigl(\sum c_nt^n\Bigr)
=\ord_t\Bigl(\sum\iota(c_n)t^n\Bigr);
\]
the zero series has order $+\infty$ on both sides. Composing
$\widetilde\alpha$ with
$\Spec L[[t]]\to\Spec F[[t]]$ and then mapping to $Y_L$ produces a
lift of $\alpha_L$; uniqueness identifies it with
$\widetilde{\alpha_L}$. Whether the center lies on, or avoids, a
given boundary component is determined by the vanishing, or
nonvanishing, of the constant term of a local equation, and both
properties are preserved by a field embedding. The same displayed
order identity preserves the contact order. Throughout this extension
the arc parameter $t$ remains fixed. Applying the argument with
$F=\kappa(\alpha)$ proves compatibility after every residue-field
extension; neither $F$ nor $L$ need be algebraically closed.
\end{proof}
\subsection{Extraction of a generically stable component}

Let
$\tau_{\infty,m}^X:\operatorname{Arc}(X)\to J_m(X)$ be truncation.
Following de Fernex--Ein--Ishii, a subset of the form
$(\tau_{\infty,m}^X)^{-1}(\Sigma)$, with
$\Sigma\subset J_m(X)$ constructible, is called a cylinder. Set
\[
U:=\operatorname{Arc}(X)\setminus\operatorname{Arc}(\Sing X).
\]
For a closed quasi-cylinder
$C\nsubseteq\operatorname{Arc}(\Sing X)$,
\cite[Definition~3.2]{DEI} supplies a cylinder $Q$ such that
\[
C\cap U=\overline Q\cap U.
\]
Here and throughout this subsection, closures are Zariski closures
in $\operatorname{Arc}(X)$. In particular, this condition involves
the closure of $Q$, rather than $Q$ itself. The following lemma
establishes the finite-level description needed for the eight
exact-order strata of the surface $X=V(x^2+y^3-z^5)$.

\begin{lemma}[Generic stability on exact-order strata]
\label{lem:e8-stable-component}
Let $p,q,r$ be positive integers satisfying
\[
\max(q,r)<p<\min(2q,4r),
\]
and let $W=W(p,q,r)$ be the reduced locally closed stratum defined by
\[
\ord_t x=p,\qquad \ord_t y=q,\qquad \ord_t z=r.
\]
Suppose that $C\subset\operatorname{Arc}(X)$ is closed and that
$C\cap U=\overline Q\cap U$ for a cylinder $Q$ defined at jet level
$m\geq0$. Then $C\cap W$ is a cylinder defined at level $m+p$ and
has finitely many irreducible components. For every nonempty
irreducible component $T$ of $C\cap W$, the closure
$D:=\overline T$ is generically stable in the sense of Reguera's
corrected Definition~3.1.
\end{lemma}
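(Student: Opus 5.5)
The plan is to prove everything on the stratum $W$ itself, using one fact: on $W$ the variable $x$ is a \emph{simple} Newton root of $x^2=z^5-y^3$, with controlled order. First, $W\subset U$. Every arc of $W$ has all three coordinate orders finite, so it is not the constant arc $0_\infty$, and $\operatorname{Arc}(\Sing X)=\{0_\infty\}$ by Lemma~\ref{lem:e8-geometric-normality}. Hence
\[
C\cap W=\overline Q\cap W .
\]
Second, $W$ is itself a cylinder at level $p$. The hypothesis $\max(q,r)<p$ means the three exact-order conditions are read off from the $p$-jet. Third, the order of the Jacobian ideal $(2x,3y^2,-5z^4)$ along an arc of $W$ equals $\min(p,2q,4r)=p$, by $p<\min(2q,4r)$. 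So $W$ lies in the locus of arcs with Jacobian order exactly $p$. On that locus the truncations $\tau^X_{\infty,n}$ for $n\ge 2p$ (say) are piecewise trivial affine fibrations, by the Denef--Loeser/Ein--Musta\c{t}\u{a} lemma \cite{EM}. I would use this fibration structure to get both the cylinder property and the finiteness of components.

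The key step is a lifting statement. Fix $\alpha=(x,y,z)\in W(F)$ and any $(m+p)$-jet-preserving perturbation
\[
y'=y+\eta,\qquad z'=z+\zeta,\qquad \ord\eta,\ \ord\zeta\ge m+p+1 .
\]
The change in the right-hand side $z^5-y^3$ then has order at least
\[
m+p+1+\min(2q,4r)\ \ge\ m+2p+2 ,
\]
again by $p<\min(2q,4r)$. Now solve $(x+\delta)^2=z'^5-y'^3$. Since $\ord(2x)=p$, Hensel/Tougeron's lemma gives a unique solution with
\[
\ord\delta\ \ge\ (m+2p+2)-p\ =\ m+p+2\ >\ m .
\]
So every arc of $X$ with the same $(m+p)$-jet as $\alpha$ has the form $(x+\delta,y+\eta,z+\zeta)$, with $\delta$ uniquely determined and all three perturbations invisible at level $m$.

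This should give a family of automorphisms $\Theta$ of $W$ (continuous in the Zariski topology, defined by polynomial recursions in the coefficients) with two properties. They act transitively on each fibre of $\tau^X_{\infty,m+p}|_W$, and they preserve $m$-jets, hence preserve $Q$. Because each $\Theta$ is a homeomorphism of $W$ preserving $Q\cap W$, it preserves $\overline Q\cap W$; here one has to check that it also preserves $\overline{Q}\cap W$ rather than only $\overline{Q\cap W}$, for instance by extending $\Theta$ to an open cylinder neighbourhood of $W$. Therefore $C\cap W$ is a union of fibres of $\tau^X_{\infty,m+p}|_W$, so
\[
C\cap W=(\tau^X_{\infty,m+p})^{-1}(\Sigma)\cap W,
\qquad
\Sigma:=\tau^X_{\infty,m+p}(C\cap W).
\]
I still need $\Sigma$ constructible: using the fibration at a level $n\ge m+p$ with $n\ge 2p$, $\Sigma$ is the image of a constructible set by Chevalley. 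This gives the cylinder at level $m+p$.

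For finiteness: over the piecewise trivial fibration, the cylinder is (piecewise) $\Sigma\times\mathbb A^\infty$, with $\Sigma$ in a finite-type jet scheme. So its irreducible components correspond to those of $\Sigma$, and there are finitely many. For generic stability, let $T$ be a component. Then $T$ is an irreducible cylinder contained in the bounded-Jacobian-order locus, and $T$ is open and dense in $D=\overline T$. For $n\gg0$ the truncations satisfy
\[
\dim\tau_{n+1}(D)=\dim\tau_n(D)+\dim X ,
\]
since the fibres are affine spaces of dimension $2$. Equivalently, $D$ contains a dense open cylinder not contained in $\operatorname{Arc}(\Sing X)$, which is Reguera's corrected generic stability. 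The main obstacle I expect is the lifting step: checking carefully that the perturbation $\Theta$ is a Zariski homeomorphism of $W$ preserving $\overline Q\cap W$, not merely $Q$, and that the inequalities give exactly level $m+p$.
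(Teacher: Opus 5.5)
There is a genuine gap. It sits exactly at the step you flag, and that step carries the whole lemma.

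\textbf{Where the argument breaks.} Your maps $\Theta$ live only on $W$. The Hensel step for $(x+\delta)^2=z'^5-y'^3$ uses $\ord x=p$, $\ord y\ge q$ and $\ord z\ge r$, and the last two are closed conditions. A homeomorphism of $W$ preserving $Q\cap W$ preserves $\overline{Q\cap W}\cap W$. But $\overline Q\cap W$ also contains limits of arcs of $Q$ lying outside $W$, and nothing in your outline controls that part.

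This is not a side issue. In the intended use ($C=C_j$, $W=W_i$ in the separation lemma) one may well have $Q\cap W=\emptyset$, while $\overline Q\cap W=C_j\cap W_i$ is precisely the set that must be shown to be a cylinder. Without an automorphism preserving $\overline Q$ you cannot conclude that $C\cap W$ is a union of fibres of $\tau^X_{\infty,m+p}$. The cylinder property, the finiteness of components and the generic stability all depend on that conclusion.

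\textbf{Your suggested repair does not work with the Hensel formula.} Solving for $\delta$ divides by $x$, and this is not a morphism on an open neighbourhood of $W$. For example, take $m\ge p$ and work on the open set $\{\ord x\le p\}\supset W$. Consider $y=1$, $x=t^p+st^{p-1}$, $z=(1+x^2)^{1/5}$, with $\eta=t^{m+p+1}$ and $\zeta=0$. For $s\ne0$ the coefficient of $t^{m+2}$ in $\delta$ is $-3/(2s)$. So this family, a morphism $\mathbb A^1\to\{\ord x\le p\}$, is sent to something with a pole at $s=0$, and $\Theta$ does not extend to a morphism there.

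\textbf{The missing idea} is to use automorphisms of the whole arc scheme coming from vector fields tangent to $X$.
\begin{itemize}
\item The derivations $\delta_y=2x\partial_y-3y^2\partial_x$ and $\delta_z=2x\partial_z+5z^4\partial_x$ kill $x^2+y^3-z^5$. Hence $\exp(\lambda t^N\delta)$ is a $t$-adically convergent automorphism of all of $\operatorname{Arc}(X)$.
\item For $N>m$ it fixes every coefficient of index $\le m$. It therefore preserves $Q$, and hence $\overline Q$, with no neighbourhood argument needed.
\item The inequalities $\max(q,r)<p<\min(2q,4r)$ make these flows preserve $W$. On coefficients of index $\le k=N+p$ they act by $y_k\mapsto y_k+2x_p\lambda$, respectively $z_k\mapsto z_k+2x_p\lambda$.
\end{itemize}
Your Hensel computation is then needed only in ring form: $R_W=\Gamma(W,\mathcal O_W)\simeq B[y_k,z_k\mid k>p]$, with $B$ of finite type. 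Put $B_{m+p}=B[y_a,z_a\mid p<a\le m+p]$. A Vandermonde argument on the radical ideal $J$ of $C\cap W$ then gives $J=(J\cap B_{m+p})R_W$.

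This ideal-theoretic descent also repairs three looser points in your outline.
\begin{itemize}
\item Transitivity on $\mathbb C$-points of fibres does not by itself control non-closed points. You would need, for instance, that the countably generated $\mathbb C$-algebra $R_W$ is Jacobson.
\item Chevalley cannot be applied to $C\cap W$, since before the descent it is not known to be constructible at any finite level.
\item The truncation-dimension condition you state is not Reguera's corrected Definition~3.1. That definition asks for an affine open $H\subset\operatorname{Arc}(X)$ with $D\cap H=T\neq\emptyset$ cut out by the radical of a finitely generated ideal. Here one takes $H=D(x_py_qz_r)$.
\end{itemize}
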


\begin{proof}
\emph{The coefficient ring of the stratum.}
Write $x(t)=\sum_{a\geq0}x_at^a$, and similarly for $y(t)$ and
$z(t)$. Set all coefficients below $p,q,r$, respectively, equal
to zero and put $h=x_py_qz_r$. Define
\[
P_0=\mathbb C[x_p,y_q,\ldots,y_p,z_r,\ldots,z_p,h^{-1}].
\]
Let $F_n$ denote the coefficient of $t^n$ in $x(t)^2+y(t)^3-z(t)^5$
after these lower coefficients have been set to zero. For $n\leq2p$,
the polynomial $F_n$ belongs to $P_0$: a term from $y(t)^3$ uses
indices at most $n-2q<p$, a term from $z(t)^5$ uses indices at
most $n-4r<p$, and a term from $x(t)^2$ uses only $x_p$.
Set
\[
B_0=P_0/(F_0,\ldots,F_{2p}),\qquad B=(B_0)_{\mathrm{red}}.
\]
For each $k>p$, the next coefficient equation has the form
\begin{equation}
\label{eq:e8-stability-triangular}
F_{p+k}=2x_px_k+G_k,
\end{equation}
where $G_k$ uses only $x$-coefficients of index less than $k$,
$y$-coefficients of index at most $k+p-2q<k$, and $z$-coefficients
of index at most $k+p-4r<k$. Since $2x_p$ is a unit, these equations
recursively eliminate all $x_k$ with $k>p$. They give mutually
inverse ring homomorphisms between the localized coefficient ring
of the stratum before reduction and
$B_0[y_k,z_k\mid k>p]$: each polynomial involves only finitely many
coefficients, so the inverse identities follow by finite induction
from \eqref{eq:e8-stability-triangular}. The nilradical of a
polynomial ring is the extension of the nilradical of its
coefficient ring, also for infinitely many variables. Taking
reductions therefore gives
\begin{equation}
\label{eq:e8-stability-ring}
R_W:=\Gamma(W,\mathcal O_W)
\simeq B[y_k,z_k\mid k>p].
\end{equation}
In particular, $B$ is a reduced Noetherian $\mathbb C$-algebra.

\emph{Automorphisms fixing a finite jet level.}
On $\mathbb C[x,y,z]$ consider the derivations
\[
\delta_y=2x\partial_y-3y^2\partial_x,
\qquad
\delta_z=2x\partial_z+5z^4\partial_x.
\]
Both annihilate $x^2+y^3-z^5$, and hence descend to
$\mathcal O(X)$. For $N\geq1$ and $\lambda\in\mathbb C$, the
$t$-adically convergent exponential
\[
\exp(\lambda t^N\delta)
=\sum_{\ell\geq0}\frac{\lambda^\ell t^{N\ell}}{\ell!}\delta^\ell,
\qquad \delta\in\{\delta_y,\delta_z\},
\]
defines a continuous automorphism of $\mathcal O(X)[[t]]$ fixing
$t$, with inverse $\exp(-\lambda t^N\delta)$. Multiplicativity
follows from the iterated Leibniz rule, and the inverse identity
follows by multiplying the two convergent exponential series.
No local nilpotence of $\delta$ on $\mathcal O(X)$ is required.
More explicitly, for every $\mathbb C$-algebra $R$ and every arc
$\alpha:\mathcal O(X)\to R[[t]]$, the formula
\begin{equation}
\label{eq:e8-stability-flow}
f\longmapsto
\sum_{\ell\geq0}\frac{\lambda^\ell t^{N\ell}}{\ell!}
\alpha(\delta^\ell f)
\end{equation}
defines the transformed arc. Its coefficient of index $a$ uses
only $\ell\leq a/N$, and is a polynomial in finitely many input
coefficients. These formulas and their inverses are natural in
$R$ and define scheme automorphisms of $\operatorname{Arc}(X)$.
They induce automorphisms on its reduction as well.
If $N>m$, they fix every coefficient of index at most $m$; thus
they preserve $Q$ and its Zariski closure.

In the polynomial ring $\mathbb C[x,y,z]$, give $x,y,z$ weights
$p,q,r$. The derivations $\delta_y$ and
$\delta_z$ increase weighted order by at least, respectively,
\[
s_y=\min(p-q,2q-p)>0,\qquad
s_z=\min(p-r,4r-p)>0.
\]
For arcs in $W$, these bounds imply the corresponding lower
bounds on $t$-adic order. Consequently, the automorphisms
\eqref{eq:e8-stability-flow} preserve the lower coefficient
vanishings and the three leading coefficients. They restrict to
automorphisms of $W$ and fix $h$.

For $k>p$, take $N=k-p$. On coefficients of indices at most $k$,
the $\delta_y$-flow fixes all $x$-coefficients, all $z$-coefficients,
and all lower $y$-coefficients, and acts by
\[
y_k\longmapsto y_k+2x_p\lambda.
\]
Indeed, the first correction to $x$ has order at least
$N+2q>k$, whereas $\delta_y(y)=2x$. For $\ell\geq2$, the
remaining corrections to $y$ have order at least
\[
N\ell+p+(\ell-1)s_y>k.
\]
The same argument for $\delta_z$ gives
$z_k\mapsto z_k+2x_p\lambda$, fixes all lower free coefficients
and $y_k$, and has its first correction to $x$ in order
$N+4r>k$. In particular, the restrictions of these flows to
$B[y_a,z_a\mid p<a\leq k]$ are precisely the indicated
translations. Coefficients of larger index may change and are
not used in this assertion.

\emph{Descent of the defining ideal to a finite level.}
The singular locus of $X$ is the origin, so $W\subset U$. Hence
\[
C\cap W=\overline Q\cap W.
\]
Let $J\subset R_W$ be the radical ideal of this closed subset
and put $K=m+p$. For every $k>K$, the preceding flows have
$N=k-p>m$ and preserve both $W$ and $\overline Q$.
Their pullbacks therefore preserve $J$. Set
\[
B_K=B[y_a,z_a\mid p<a\leq K].
\]
We claim that
\begin{equation}
\label{eq:e8-stability-descent}
J=(J\cap B_K)R_W.
\end{equation}
Let $f\in J$ and suppose that its largest free coefficient index
is $k>K$. Write
\[
f=\sum_{a=0}^d f_a y_k^a,
\qquad
f_a\in B[y_b,z_b\mid p<b<k][z_k].
\]
The $\delta_y$-flow sends $f$ to $f(y_k+\lambda u)$, where
$u=2x_p\in B^\times$. This polynomial belongs to $J$ for every
$\lambda\in\mathbb C$. Choose $d+1$ distinct complex scalars.
The scalar Vandermonde matrix is invertible, so every coefficient
in $\lambda$ of $f(y_k+\lambda u)$ belongs to $J$. Its top
coefficient is $u^df_d$, whence $f_d\in J$. Subtracting
$f_dy_k^d$ and repeating proves $f_a\in J$ for every $a$.
Applying the $\delta_z$-flow to each $f_a$ extracts all
coefficients in $z_k$ in the same manner. Thus $f$ lies in the
extension of the intersection of $J$ with the subring of strictly
smaller free indices. Descending through the finitely many
indices in $f$ proves \eqref{eq:e8-stability-descent}.
This argument uses only units in $B$, so it does not require $B$
to be a domain or $J$ to be prime.

Since $B_K$ is Noetherian, $\mathfrak j_K:=J\cap B_K$ is finitely
generated. Its generators, represented in the displayed
coordinates of $B_K$, involve only coefficients of index at
most $K$. Clearing powers of $h$, adjoining the lower coefficient
vanishings, and retaining the condition $h\ne0$ express
$C\cap W$ as the inverse image of a constructible subset of
$J_K(X)$.
Thus $C\cap W$ is a cylinder defined at level $K=m+p$.

\emph{Irreducible components and generic stability.}
Equation~\eqref{eq:e8-stability-descent} gives
\[
R_W/J\simeq(B_K/\mathfrak j_K)[y_a,z_a\mid a>K].
\]
For an arbitrary ring $A_0$, the minimal prime ideals of a
polynomial ring over $A_0$ are exactly the extensions of its
minimal prime ideals, even for an infinite set of variables.
Indeed, the extension of a prime is prime; if $P$ is a minimal
prime of the polynomial ring, choose a minimal prime of $A_0$
contained in its contraction and use minimality. Conversely,
contraction and extension show that an extended minimal prime
contains no strictly smaller prime. Applying this observation
to the Noetherian ring $B_K/\mathfrak j_K$ proves that $C\cap W$ has
finitely many irreducible components and that each component
$T$ is defined in $R_W$ by the extension of a finitely generated
prime ideal of $B_K$.

Put
\[
H:=D_{\operatorname{Arc}(X)}(h).
\]
This is an affine open subset of the entire arc scheme, since
$X$ is affine. The stratum $W$ is the reduced closed subscheme
of $H$ cut out by the finitely many lower coefficient
vanishings, including the centering conditions. Lift the finite
generators defining $T$ in $R_W$ to $\Gamma(H,\mathcal O_H)$.
Together with these lower coefficient functions, they generate
a finitely generated ideal $I_T$ such that
\[
I_H(T)=\sqrt{I_T}.
\]
Reduction causes no additional finiteness requirement here:
if $L$ is the ideal of the lower coefficient functions, then
$\sqrt{\sqrt L+(g_1,\ldots,g_s)}
=\sqrt{L+(g_1,\ldots,g_s)}$.
In particular, $T$ is closed in $H$. The open-subspace closure
identity now gives
\[
D\cap H=\overline T\cap H=\overline T^{\,H}=T.
\]
This set is nonempty, its defining ideal is the radical of a
finitely generated ideal, and it contains the generic point of
the irreducible closed set $D$. Moreover, $T\subset W\subset U$
implies $D\nsubseteq\operatorname{Arc}(\Sing X)$. Passing to
$H_{\mathrm{red}}$ preserves the same radical description and
gives the required affine open in $\operatorname{Arc}(X)_{\mathrm{red}}$.
These verify
all requirements of \cite[corrected Definition~3.1]{RegueraCorr}.
\end{proof}

For the eight order vectors of Subsection~\ref{subsec:e8-setup},
the inequalities required in Lemma~\ref{lem:e8-stable-component}
are verified by the following positive differences:
\[
\begin{array}{c|c|rrrr}
i&(p_i,q_i,r_i)&p_i-q_i&p_i-r_i&2q_i-p_i&4r_i-p_i\\ \midrule
1&(15,10,6)&5&9&5&9\\
2&(3,2,2)&1&1&1&5\\
3&(6,4,3)&2&3&2&6\\
4&(9,6,4)&3&5&3&7\\
5&(12,8,5)&4&7&4&8\\
6&(5,4,2)&1&3&3&3\\
7&(10,7,4)&3&6&4&6\\
8&(8,5,3)&3&5&2&4.
\end{array}
\]
Consequently, the lemma applies to every $W_i=W(p_i,q_i,r_i)$.

\subsection{Separation of the exact-order families}

\begin{lemma}[Separation]
\label{lem:e8-separation}
For all $i\ne j$,
\[
W_i\cap C_j=\emptyset.
\]
\end{lemma}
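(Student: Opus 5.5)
The plan is to argue by contradiction: assume $W_i\cap C_j\ne\emptyset$ for some $i\ne j$, first prune the pairs $(i,j)$ by comparing coordinate orders, and then rule out the survivors with a wedge argument.

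\emph{Step 1: order pruning.} By Lemma~\ref{lem:e8-generic-orders}, $C_j\subset Z_j$. Any point of $W_i\cap C_j$ therefore has exact order vector $v_i$ and satisfies $v_i\geq v_j$ coordinatewise. This disposes of every pair with $v_i\not\geq v_j$; the table of the eight vectors settles this by a finite comparison. In the surviving pairs $v_i>v_j$ strictly. Since $v_1$ dominates all other vectors, the pairs $(1,j)$ always survive, together with a few others along the arms, such as $(5,3)$ and $(4,2)$. The remaining steps must handle all surviving pairs uniformly.

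\emph{Step 2: reduction to a generically stable point.} Lemma~\ref{lem:e8-stable-component} applies with $C=C_j$, a closed quasi-cylinder not contained in $\operatorname{Arc}(\Sing X)$, to which \cite[Definition~3.2]{DEI} attaches a cylinder $Q$. It gives:
\begin{itemize}
\item $C_j\cap W_i$ is a cylinder with finitely many irreducible components;
\item for a nonempty component $T$, the closure $D=\overline T$ is generically stable in Reguera's corrected sense.
\end{itemize}
The generic point $\gamma$ of $D$ lies in $W_i$. By Lemma~\ref{lem:e8-transversality}, applied over $F=\kappa(\gamma)$, its lift is centered on $E_{i,F}^\circ$ with contact order one. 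By \eqref{eq:e8-nash-identification}, $\gamma$ lies in $\overline{N_j}$.

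Reguera's corrected curve selection lemma \cite{RegueraCorr} then produces a $K$-wedge $\omega\colon\Spec K[[s,t]]\to X$ with the following properties:
\begin{itemize}
\item its special arc ($s=0$) is $\gamma$, after a field extension, which Lemma~\ref{lem:e8-transversality} shows is harmless;
\item its generic arc ($s\ne0$) lies in $N_j$, i.e.\ its lift is centered on $E_j$.
\end{itemize}

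\emph{Step 3: formal wedge adjacency, the main obstacle.} We must show that such a wedge cannot exist. My first attempt is to show that the wedge lifts to $Y$, or at least that its lift is defined near the generic point of the special arc. The lift of $\omega$ is a rational map from the regular two-dimensional scheme $\Spec K[[s,t]]$, so its indeterminacy is only a finite set of closed points. The key claim is that the transverse contact of the special arc with $E_i^\circ$ prevents indeterminacy at the closed point. Here I would use the explicit local form $f\circ\pi=u^{\ord_{E_i}(f)}\varepsilon_f$ from the proof of Lemma~\ref{lem:e8-generic-orders}. With contact order one, the pullback of $u$ is $t\cdot(\text{unit})$ plus a term in $s$, and the equations for $x,y,z$ can be solved for a lift through the point of $E_i^\circ$.

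If the lift exists, its restriction to $t=0$ is a curve in $Y$ whose generic point lies in $E_j$ and whose special point lies in $E_i^\circ$, which is disjoint from $E_j$. Since $E_j$ is closed, this is a contradiction. If the direct lifting fails, I would instead invoke the known adjacency criterion: a transverse arc through $E_i^\circ$ in $\overline{N_j}$ forces $C_i\subset C_j$. This follows because $W_i$ is irreducible and dense in $C_i$, and the flows of Lemma~\ref{lem:e8-stable-component} together with the $\mathbb C^\times$-action of weights $(15,10,6)$ act transitively enough on $W_i$. The inclusion $C_i\subset C_j$ then contradicts the bijectivity of the Nash map for surfaces, since all eight $E_k$ are essential and the $C_k$ are the distinct irreducible components.

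Finally, every step is compatible with field extension, by Lemma~\ref{lem:e8-transversality}, so the conclusion holds for scheme points and not only for complex points. This yields $W_i\cap C_j=\emptyset$.
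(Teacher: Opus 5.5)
There is a genuine gap at Step~3, which is the heart of the lemma. Your Steps~1--2 follow the paper: pruning via $C_j\subset Z_j$, the quasi-cylinder property, Lemma~\ref{lem:e8-stable-component}, and Reguera's corrected curve selection. Your final contradiction with Nash bijectivity for surfaces is also the paper's endgame.

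But your claim that transverse contact of the special arc with $E_i^\circ$ prevents indeterminacy of the lifted wedge is false. The argument is purely local and never uses that $E_i$ is essential, so it breaks wherever the same local form holds for a non-essential divisor. For example, on $\mathbb A^2$ blow up the origin ($E_1$) and then a point of $E_1$ ($E_2$). In the chart $x=u$, $y=u^2w$ near a point of $E_2^\circ$ with $w\ne0$, one has $x=u$ and $y=u^2\cdot(\text{unit})$. The wedge $(x,y)=(t,\,st+ct^2)$ with $c\ne0$ has special arc $u=t$, $w=c$, which is transverse to $E_2$ at a point of $E_2^\circ$. Yet $w=c+s/t$, so the lift is undefined at the closed point; the generic arc is centered on $E_1^\circ$. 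Whether such wedges lift is exactly the content of the Nash problem, so it cannot be read off from the equations $f\circ\pi=u^{\ord_{E_i}(f)}\varepsilon_f$.

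Your fallback states the right conclusion, but the proposed justification fails. The flows of Lemma~\ref{lem:e8-stable-component} are only known to preserve $C_j\cap W_i$ for $N>m$, and they act trivially on all coefficients of index at most $p$. The weighted $\mathbb C^\times$-action fixes the nonconstant function $y_{10}^3/x_{15}^2$ on $W_1$. So nothing you list acts transitively on $W_i$, and a nonempty closed invariant subset $C_j\cap W_i$ need not be all of $W_i$.

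The missing ingredient is Fern\'andez de Bobadilla's formal wedge theorem \cite[Theorem~33, $(3)\Rightarrow(1)$]{FdBTopology}. It says that a formal wedge whose special arc is transverse to an essential $E_i$ at a smooth point of the exceptional divisor, and whose generic arc lies in $N_{E_k}$ with $k\ne i$, forces $N_{E_i}\subset\overline{N_{E_k}}$. The theorem needs an uncountable algebraically closed field, so you must extend scalars to $\Omega=\overline K$. You then have to check that:
\begin{itemize}
\item $X_\Omega$ stays normal with an isolated singularity;
\item $\pi_\Omega$ stays minimal, so the $E_{\ell,\Omega}$ are essential;
\item lifts, centers and transversality survive the extension.
\end{itemize}
After that, \cite{FdBPP} gives the contradiction.

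Finally, curve selection gives only $\beta\in C_j\setminus D$, not $\beta\in N_j$. You must still show $\widetilde\beta(0)\notin E_i$, as the paper does. Otherwise $\ord_\beta=v_i$, because $\Phi^*h_i$ is a unit; then $\Phi$ factors through $S=C_j\cap W_i$, and maximality of the component $T$ forces $\beta\in D$, a contradiction.
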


\begin{proof}
We first identify the only ordered pairs that are not excluded
immediately by the coordinate orders.  Lemma~\ref{lem:e8-generic-orders}
gives $C_j\subset Z_j$.  Hence, if
$\gamma\in W_i\cap C_j$, then the definition of $W_i$ and the
inclusion $\gamma\in Z_j$ give
\[
p_i=\ord_\gamma(x)\geq p_j,\qquad
q_i=\ord_\gamma(y)\geq q_j,\qquad
r_i=\ord_\gamma(z)\geq r_j.
\]
Thus
\[
W_i\cap C_j\ne\emptyset\quad\Longrightarrow\quad v_j\leq v_i.
\]
In particular, $v_j\nleq v_i$ implies
$W_i\cap C_j=\emptyset$.  Notice that this is only a necessary
condition for nonempty intersection, not a sufficient one.  With the
above convention, $v_j\leq v_i$ also gives the reverse inclusion of
order cylinders $Z_i\subseteq Z_j$.

For the eight vectors in question, direct coordinatewise comparison
gives the following strict chain in the product order:
\[
v_2<v_6<v_3<v_8<v_4<v_7<v_5<v_1.
\]
Indeed, the successive differences, in the same order, are
\[
\begin{gathered}
(2,2,0),\quad(1,0,1),\quad(2,1,0),\quad(1,1,1),\\
(1,1,0),\quad(2,1,1),\quad(3,2,1),
\end{gathered}
\]
all of which are coordinatewise nonnegative and nonzero.  Therefore,
for a fixed $i$, the possible indices $j\ne i$ are exactly those
lying strictly to the left of $i$ in this chain.  Equivalently, the
complete list of ordered pairs not yet excluded is
\[
\begin{array}{c|l}
i&\text{indices $j\ne i$ satisfying $v_j\leq v_i$}\\ \midrule
2&\emptyset\\
6&2\\
3&2,6\\
8&2,3,6\\
4&2,3,6,8\\
7&2,3,4,6,8\\
5&2,3,4,6,7,8\\
1&2,3,4,5,6,7,8.
\end{array}
\]
This table contains all $28$ remaining ordered pairs; the other $28$
ordered pairs with $i\ne j$ have already been eliminated.  It remains
to rule out the displayed pairs.  Fix one of them, so that
\[
v_j\leq v_i,\qquad v_j\ne v_i,
\]
and suppose, for a contradiction, that
$\gamma\in W_i\cap C_j$.

The maximal divisorial set $C_j=W(E_j,1)$ is a quasi-cylinder by
\cite[Theorem~3.9]{DEI}. The theorem applies to the complex variety
$X$, the resolution $\pi:Y\to X$, the smooth prime divisor $E_j$ on
$Y$, and the positive integer $q=1$; these hypotheses were verified
above and in Lemma~\ref{lem:e8-resolution}. Since $X$ has an isolated
singularity,
\[
\operatorname{Arc}(\Sing X)=\{0_\infty\},
\]
and $H_i:=D_{\operatorname{Arc}(X)}(h_i)$ does not contain the zero
arc. Since $\mathcal A$ is closed in $\operatorname{Arc}(X)$,
$C_j$ is also closed in the entire arc scheme. The quasi-cylinder
condition is used with its closure formulation above. Put
\[
S:=C_j\cap W_i=C_j\cap Z_i\cap D_{\mathcal A}(h_i).
\]
By Lemma~\ref{lem:e8-stable-component} and the verified inequalities
for $v_i$, the set $S$ has finitely many irreducible components.
Choose one such component $T$ containing $\gamma$ and set
\[
D:=\overline T^{\,\mathrm{Zar}}\subset C_j.
\]
Lemma~\ref{lem:e8-stable-component} shows that $D$ is generically
stable in the corrected sense. Moreover, $D\subset Z_i$. By
Lemma~\ref{lem:e8-generic-orders}, the generic order vector of $C_j$
is $v_j\ne v_i$; under the present assumption at least one coordinate
is strictly smaller than the corresponding coordinate of $v_i$.
Thus $C_j\nsubseteq Z_i$ and
\[
D\subsetneq C_j.
\]

We next identify the generic point of $D$. Let
\[
F:=C_j\cap Z_i,
\qquad S=F\cap D(h_i).
\]
The set $F$ is closed and contains $T$, hence $D\subset F$. Since $T$
is an irreducible component and therefore closed in $S$,
\[
T=\overline T^{\,S}
=S\cap\overline T^{\,\mathcal A}
=S\cap D
=D\cap D(h_i).
\]
Thus $T$ is a nonempty dense open subset of the irreducible closed set
$D$, and they have the same generic point
\[
\xi:=\eta_D=\eta_T\in T\subset S\subset W_i.
\]

We now verify all hypotheses of Reguera's corrected curve selection
lemma. First, $\mathbb C$ is perfect, and $X$ is an integral,
separated variety of finite type. Second, $D$ and $C_j$ are
irreducible and closed, with $D\subsetneq C_j$: the irreducibility of
$C_j$ follows from Lemma~\ref{lem:e8-generic-orders}, and $D$ is the
closure of the irreducible component $T$. Third,
Lemma~\ref{lem:e8-stable-component} supplies an affine open
neighborhood $H_i$ in $\operatorname{Arc}(X)$, with $D\cap H_i=T$,
and the radical of a finitely generated ideal required by
Reguera's corrected Definition~3.1. In particular, this is an
open neighborhood in the entire arc scheme, not merely in
$\mathcal A$. Also $\xi\in T\subset H_i$, and $H_i$ avoids the
zero arc, so
$D\nsubseteq\operatorname{Arc}(\Sing X)$. Finally, the
special point is the generic point $\xi$ of $D$, and curve selection
is applied on $\operatorname{Arc}(X)_{\mathrm{red}}$.

Accordingly, applying \cite[Corollary~4.8]{RegueraOriginal} together
with its correction in
\cite[Corollary~4.8 and Remark~C.4]{RegueraCorr} to
$D\subsetneq C_j$ produces a finite extension
$K/\kappa(\xi)$ and a morphism
\[
\Phi:\Spec K[[s]]\longrightarrow C_j
\]
such that
\[
\Phi(0)=\xi,
\qquad
\beta:=\Phi(\eta)\in C_j\setminus D.
\]
Let $\alpha_0$ be the special $K$-valued arc represented by $\Phi$;
it is obtained from the residue-field arc of $\xi$ by the extension
$\kappa(\xi)\hookrightarrow K$. Its point in the arc space is $\xi$,
and $\alpha_0\in W_i(K)$.

Because $\xi\in D(h_i)$, the reduction of $\Phi^*h_i$ modulo $(s)$ is
nonzero; hence
\[
\Phi^*h_i\in K[[s]]^\times.
\]
More explicitly, $\Phi^*h_i$ is the product of the three coefficients
$\Phi^*x_{p_i}$, $\Phi^*y_{q_i}$, and $\Phi^*z_{r_i}$. A product is a
unit in the local ring $K[[s]]$ only if every factor is a unit. These
coefficients are consequently nonzero in $K((s))$, and
\[
\ord_\beta x\leq p_i,\qquad
\ord_\beta y\leq q_i,\qquad
\ord_\beta z\leq r_i.
\]
The general arc
$\beta:\Spec(K((s))[[t]])\to X$ is not the zero arc. Since
$\Sing X=\{0\}$ and $\pi$ is an isomorphism over
$X\setminus\{0\}$, its generic point has a unique lift to $Y$; the
valuative criteria for properness and separatedness extend it
uniquely to
\[
\widetilde\beta:\Spec(K((s))[[t]])\longrightarrow Y.
\]

If $\widetilde\beta(0)\in E_i$, effectivity of the total transforms
would give
\[
\ord_\beta(f)\geq\ord_{E_i}(f)
\qquad(f=x,y,z).
\]
Combined with the opposite inequalities above, this would imply
$\ord_\beta(x,y,z)=v_i$, hence $\beta\in W_i$. In that case $\Phi$
would factor through $S$: the coefficients below orders
$p_i,q_i,r_i$ vanish in $K((s))$ and therefore already vanish in
$K[[s]]$, while $h_i$ is a unit.

Under this assumption, inside $S$, the point $\xi$ is a specialization
of $\beta$. The
irreducible closed subset $\overline{\{\beta\}}^{\,S}$ therefore
contains $\overline{\{\xi\}}^{\,S}=T$. By maximality of the
irreducible component $T$ of $S$, one has
$\overline{\{\beta\}}^{\,S}=T$, so $\beta\in T\subset D$, contrary to
$\beta\notin D$. We conclude that
\[
\widetilde\beta(0)\notin E_i.
\]

Because $\beta$ is centered at the origin, its lifted center lies in
the exceptional set. Choose an exceptional component $E_k$ containing
that center. Then $k\ne i$. On the other hand,
$\alpha_0\in W_i(K)$, so Lemma~\ref{lem:e8-transversality} shows that
the lift of the special arc $\alpha_0$ is transverse to $E_i$. The
morphism $\Phi$, equivalently the induced map
\[
\Spec K[[s,t]]\longrightarrow X,
\]
is therefore a formal wedge whose special arc is transverse to $E_i$
and whose general arc lifts with center on $E_k$.

Let $\Omega=K^{\mathrm{alg}}$ and fix an embedding
$K\hookrightarrow\Omega$. The structure morphisms give injections
\[
\mathbb C\hookrightarrow\kappa(\xi)\hookrightarrow K
\hookrightarrow\Omega.
\]
Thus $\Omega$ is an uncountable algebraically closed field of
characteristic zero. To spell out the scalar extension of the wedge,
write $R_X=\Gamma(X,\mathcal O_X)$ and let
$\varphi:R_X\to K[[s,t]]$ be the homomorphism corresponding to the
original wedge. If
$\iota:K[[s,t]]\hookrightarrow\Omega[[s,t]]$ is the coefficientwise
embedding, define
\[
R_X\otimes_{\mathbb C}\Omega\longrightarrow\Omega[[s,t]],
\qquad
r\otimes a\longmapsto a\,\iota(\varphi(r)).
\]
This gives a formal $\Omega$-wedge
\[
\Psi_\Omega:\Spec\Omega[[s,t]]\longrightarrow X_\Omega.
\]
Since \(\Phi\) takes values in
\(C_j\subset\operatorname{Arc}^0(X)\), the associated two-parameter
morphism maps the divisor \(V(t)\) to the origin. The same remains
true after scalar extension; hence
\(\Psi_\Omega(V(t))=\{0\}\), as required for a wedge in the pointed
surface \((X_\Omega,0)\).

Lemma~\ref{lem:e8-geometric-normality} shows that $X_\Omega$ is a normal
surface singular only at the origin. The surface $Y_\Omega$ is smooth,
$\pi_\Omega$ is proper and is an isomorphism off the origin, and hence
$Y_\Omega\to X_\Omega$ is a resolution. Its exceptional components
$E_{\ell,\Omega}\simeq\mathbb P^1_\Omega$ have the same dual graph and
self-intersection $-2$. Thus \(\pi_\Omega:Y_\Omega\to X_\Omega\) remains the minimal good resolution;
this is also the base-change stability established in
\cite[Section~7.1, pp.~162--163]{FdBTopology}. For a normal surface
singularity, the irreducible components of the exceptional divisor of
the minimal resolution are precisely the essential components
\cite[p.~132]{FdBTopology}. Consequently every
$E_{\ell,\Omega}$ is essential.

The scalar-extension argument at the end of the proof of
Lemma~\ref{lem:e8-transversality} also applies to the general arc over
$K((s))$. Its unique lift extends to the unique lift over
$\Omega((s))$, and membership of its center in an exceptional
component is preserved. Applied to the special arc over $K$, the same
argument also preserves transversality. Throughout, $t$ remains the
arc parameter. If $N_{E_{\ell,\Omega}}$ denotes the family of nonzero
arcs centered at the origin whose unique lift has center on
$E_{\ell,\Omega}$, and
$\dot N_{E_{\ell,\Omega}}$ denotes those whose lift is transverse at a
smooth point of the total exceptional divisor, then
\[
(\Psi_\Omega)_0\in\dot N_{E_{i,\Omega}},
\qquad
(\Psi_\Omega)_\eta\in N_{E_{k,\Omega}}.
\]
Set
\[
C_{\ell,\Omega}:=
\overline{N_{E_{\ell,\Omega}}}^{\,\mathrm{Zar}}
\subset\operatorname{Arc}^0(X_\Omega).
\]
The proof of \eqref{eq:e8-nash-identification} applies verbatim over
$\Omega$, giving
$C_{\ell,\Omega}=W_{X_\Omega}(E_{\ell,\Omega},1)$.

We use the terminology and numbering of the published version of
\cite{FdBTopology}. By
\cite[Definitions~2--3, pp.~137--138]{FdBTopology}, a formal wedge
realizes an
adjacency from $E_u$ to $E_v$ when its generic arc belongs to
$N_{E_u}$ and its special arc belongs to $\dot N_{E_v}$; it avoids a
proper closed subset $\mathcal Z\subset
\overline{N_{E_u}}^{\,\mathrm{Zar}}$ when its generic arc does not
belong to $\mathcal Z$.
Accordingly, $\Psi_\Omega$ realizes the adjacency
\[
E_{k,\Omega}\longrightarrow E_{i,\Omega}.
\]

Set
\[
\mathcal Z:=\varnothing
\subsetneq\overline{N_{E_{k,\Omega}}}^{\,\mathrm{Zar}}.
\]
The wedge $\Psi_\Omega$ automatically avoids $\mathcal Z$. Moreover,
$\Omega$ is an uncountable algebraically closed field of
characteristic zero, $(X_\Omega,0)$ is a normal surface singularity,
$E_{i,\Omega}$ is essential, and $E_{k,\Omega}\ne E_{i,\Omega}$.
The special arc is transverse to $E_{i,\Omega}$ at a smooth point of
the total exceptional divisor, and the lifted center of the generic
arc lies on $E_{k,\Omega}$. Thus $\Psi_\Omega$ satisfies
condition~(3) of Theorem~33. Consequently,
\cite[Theorem~33, pp.~163--166, implication~$(3)\Rightarrow(1)$]
{FdBTopology} gives
\[
N_{E_{i,\Omega}}\subset
\overline{N_{E_{k,\Omega}}}^{\,\mathrm{Zar}}.
\]
Since the right-hand side is closed, taking the Zariski closure of the
left-hand side gives
\[
C_{i,\Omega}\subset C_{k,\Omega}.
\]

Finally, the Nash map is bijective for algebraic surfaces over an
algebraically closed field of characteristic zero
\cite[Main Theorem]{FdBPP}. Since the exceptional components are
essential and \eqref{eq:e8-nash-identification} holds over $\Omega$,
the eight sets
$C_{\ell,\Omega}$ are the distinct Nash irreducible components of the
arc space centered at the origin. They are therefore maximal
irreducible closed subsets of $\operatorname{Arc}^0(X_\Omega)$.
For $i\ne k$ the
inclusion just obtained is impossible. This contradiction proves the
lemma.
\end{proof}

\subsection{Openness in the coefficientwise product topology}
\label{subsec:e8-product-openness}

Via the coefficient expansions in
Subsection~\ref{subsec:e8-setup}, identify the set of complex points
of the centered arc scheme with a subset
\[
\mathcal A(\mathbb C)\hookrightarrow
\prod_{m=1}^{\infty}\mathbb C^3,
\qquad
\alpha\longmapsto
\bigl((x_m(\alpha),y_m(\alpha),z_m(\alpha))\bigr)_{m\geq1}.
\]
The \emph{coefficientwise product topology} on
\(\mathcal A(\mathbb C)\) is the subspace topology induced by the
product of the usual Euclidean topologies on the factors
\(\mathbb C^3\). Equivalently, a basic neighborhood imposes open
conditions on only finitely many coefficients and imposes no
condition on the remaining coefficients. We distinguish the Zariski
topology on the scheme $\mathcal A$ from the coefficientwise product
topology on $\mathcal A(\mathbb C)$.
The topology is specified in each assertion, and the superscript
$\mathrm{prod}$ denotes closure in the latter topology.

\begin{theorem}
\label{thm:e8-product-openness}
For each $i=1,\ldots,8$, the exact-order stratum $W_i$ is a
Zariski-open subscheme of $\mathcal A$.
Consequently, $W_i(\mathbb C)$ is open in $\mathcal A(\mathbb C)$
for the coefficientwise product topology.
Moreover, for every $i\ne j$,
\[
W_i(\mathbb C)\cap C_j(\mathbb C)=\varnothing,
\qquad
W_i(\mathbb C)\cap
\overline{C_j(\mathbb C)}^{\,\mathrm{prod}}=\varnothing.
\]
\end{theorem}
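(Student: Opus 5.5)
The plan is to prove Zariski openness of $W_i$ in $\mathcal A$ first, and then deduce the product-topology statements from it. The key input is the decomposition of $\mathcal A$ into the Nash components. By \eqref{eq:e8-nash-identification} and Lemma~\ref{lem:e8-generic-orders}, every nonzero arc of $\mathcal A$ lies in some $N_\ell\subset C_\ell$. The zero arc $0_\infty$ lies in every $C_\ell$, since each $C_\ell$ is closed and the zero arc specializes from any arc by $t\mapsto 0$. Hence
\[
\mathcal A=C_1\cup\cdots\cup C_8 .
\]
Each $C_j$ is Zariski closed in $\mathcal A$ by definition.

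Fix $i$. By Lemma~\ref{lem:e8-separation}, $W_i\cap C_j=\emptyset$ for $j\ne i$, so
\[
W_i\subset \mathcal A\setminus\bigcup_{j\ne i}C_j=:\mathcal U_i ,
\]
and $\mathcal U_i$ is Zariski open in $\mathcal A$. Also $\mathcal U_i\subset C_i\subset Z_i$ by the covering above and Lemma~\ref{lem:e8-generic-orders}. Therefore
\[
W_i=Z_i\cap D_{\mathcal A}(h_i)\supset \mathcal U_i\cap D_{\mathcal A}(h_i),
\]
and conversely $W_i\subset\mathcal U_i\cap D_{\mathcal A}(h_i)$. This gives
\[
W_i=\mathcal U_i\cap D_{\mathcal A}(h_i),
\]
an intersection of two Zariski opens of $\mathcal A$. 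The reduced locally closed structure on $W_i$ then agrees with the open subscheme structure, since both are reduced with the same underlying set. I expect the delicate point to be justifying the covering $\mathcal A=\bigcup C_\ell$ set-theoretically, including non-closed points with arbitrary residue fields. For that I would invoke the lifting argument of Lemma~\ref{lem:e8-transversality}: for any $\kappa(\alpha)$-arc $\alpha\ne 0_\infty$, the valuative criterion gives a unique lift whose center lies on some $E_\ell$, so $\alpha\in N_\ell$.

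For the product topology, each coefficient function $x_m,y_m,z_m$ is continuous on $\mathcal A(\mathbb C)$. It therefore suffices to show that Zariski open subsets of $\mathcal A$ have product-open sets of complex points. The set $D_{\mathcal A}(h_i)(\mathbb C)$ is defined by the nonvanishing of a polynomial in finitely many coefficients, so it is product-open. Each $C_j$ is a closed quasi-cylinder, and I would argue that $C_j(\mathbb C)$ is product-closed. The simplest route is to avoid quasi-cylinders: $C_j$ is Zariski closed in the affine scheme $\mathcal A$, so it is the zero set of some family of regular functions. Each such function is a polynomial in finitely many coefficients and hence Euclidean-continuous in the product topology. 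An arbitrary intersection of such zero sets is closed. Therefore $\mathcal U_i(\mathbb C)$ is product-open, and so is $W_i(\mathbb C)$.

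Finally, $W_i(\mathbb C)\cap C_j(\mathbb C)=\emptyset$ follows from Lemma~\ref{lem:e8-separation} on complex points. Because $C_j(\mathbb C)$ is product-closed, as just shown, we have $\overline{C_j(\mathbb C)}^{\,\mathrm{prod}}=C_j(\mathbb C)$, which gives the second disjointness. Alternatively, $W_i(\mathbb C)$ is a product-open set missing $C_j(\mathbb C)$, so it also misses that set's closure.
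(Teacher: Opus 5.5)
Your proposal is correct and follows essentially the same route as the paper: the covering $|\mathcal A|=\bigcup_j|C_j|$ via the unique valuative-criterion lift (valid for arbitrary residue fields), then the identity $W_i=D_{\mathcal A}(h_i)\setminus\bigcup_{j\ne i}C_j$ from Lemma~\ref{lem:e8-separation} and $C_i\subset Z_i$, and finally product-closedness of the Zariski-closed sets $C_j(\mathbb C)$ in the affine scheme $\mathcal A$. The only variation is the zero arc. The paper places $0_\infty$ in $C_\ell$ via a constant arc on $Y$ at a point of $E_\ell$, so $0_\infty\in\pi_{\mathrm{arc}}(\mathcal H_\ell)\subset C_\ell$; your $t\mapsto st$ specialization instead tacitly needs $\mathbb G_m$-stability of $C_\ell$. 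In any case $h_i(0_\infty)=0$, so the zero arc plays no role in the key equality.
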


\begin{proof}
We first establish the covering at the level of all scheme points.
Let $a\in|\mathcal A|$, put $K=\kappa(a)$, and consider the
corresponding arc
\[
\alpha_a:\operatorname{Spec}K[[t]]\longrightarrow X.
\]
Suppose that $a\ne0_\infty$.
At least one coordinate series of $\alpha_a$ is nonzero, so the
generic point of $\operatorname{Spec}K[[t]]$ maps to
$X\setminus\{0\}$.
Since $\pi:Y\to X$ is an isomorphism over this open subset,
the restriction to $\operatorname{Spec}K((t))$ has a unique lift
to $Y$. Properness of $\pi$ and the valuative criterion
\cite[Tag~0BX4]{Stacks} extend this lift uniquely to
\[
\widetilde{\alpha}_a:
\operatorname{Spec}K[[t]]\longrightarrow Y.
\]
Its center lies in the exceptional fiber, whose support is
$\bigcup_{j=1}^{8}E_j$.
Choose an $E_j$ containing that center; if the center lies on an
intersection of exceptional curves, either component may be chosen.
The point of $\operatorname{Arc}(Y)$ represented by
$\widetilde{\alpha}_a$ belongs to $\mathcal H_j$.
The inclusion $\pi_{\mathrm{arc}}(\mathcal H_j)\subset C_j$
established in Lemma~\ref{lem:e8-generic-orders} therefore gives
$a\in C_j$. This argument does not require $K$ to be algebraically
closed.

For $a=0_\infty$, choose a point of $E_j(\mathbb C)$ and take the
constant arc on $Y$ at that point. Its image is $0_\infty$, so
the same inclusion gives $0_\infty\in C_j$ for every $j$.
Since each $C_j$ is contained in $\mathcal A$, we obtain the
following equality of underlying topological spaces:
\begin{equation}\label{eq:e8-cover}
|\mathcal A|=\bigcup_{j=1}^{8}|C_j|.
\end{equation}

We next prove, first as an equality of underlying subsets of
$\mathcal A$, that
\begin{equation}\label{eq:e8-open-set}
W_i=D_{\mathcal A}(h_i)\setminus\bigcup_{j\ne i}C_j.
\end{equation}
If $a\in W_i$, then $h_i(a)\ne0$ by definition, and
Lemma~\ref{lem:e8-separation} gives $a\notin C_j$ for every
$j\ne i$. This proves one inclusion.
Conversely, suppose that $a$ belongs to the right-hand side of
\eqref{eq:e8-open-set}.
The covering \eqref{eq:e8-cover} places $a$ in some $C_k$, and
the defining exclusions force $k=i$.
By Lemma~\ref{lem:e8-generic-orders}, $C_i\subset Z_i$.
Thus $a\in Z_i\cap D_{\mathcal A}(h_i)=W_i$, proving the reverse
inclusion.

The right-hand side of \eqref{eq:e8-open-set} is Zariski open
in $\mathcal A$, because $D_{\mathcal A}(h_i)$ is open and the
union of the finitely many closed subsets $C_j$, $j\ne i$, is
closed. Since $\mathcal A$ is reduced, its open subscheme on this
subset is reduced. By definition, $W_i$ carries the induced
reduced structure on the same locally closed subset, so $W_i$
is precisely that open subscheme of $\mathcal A$.

It remains to verify the assertions about the coefficientwise
product topology. The function $h_i$ is a polynomial in finitely
many coefficient functions, so
$D_{\mathcal A}(h_i)(\mathbb C)$ is product-open.
Each $C_j$ is closed in the affine scheme $\mathcal A$ and hence
is defined by polynomial equations in the arc coefficients.
Although there may be infinitely many such equations, each involves
only finitely many coefficients and defines a product-closed zero
locus. Their intersection $C_j(\mathbb C)$ is therefore
product-closed.
Taking complex points in \eqref{eq:e8-open-set} expresses
$W_i(\mathbb C)$ as a finite intersection of product-open sets.
Finally,
\[
\overline{C_j(\mathbb C)}^{\,\mathrm{prod}}=C_j(\mathbb C),
\]
and both separation assertions follow from
Lemma~\ref{lem:e8-separation}.
\end{proof}

\begin{proposition}[Identification with the families of Example~14]
\label{prop:e8-arcology-correspondence}
Over $\mathbb C$, number the eight arc families in the order in which
they are described in Johnson--Koll\'ar's Example~14. The $i$-th
family is exactly $W_i(\mathbb C)$. The correspondence is
\[
\begin{array}{c|c|c}
\text{index here}&\text{construction in Example~14}&\ord(x,y,z)\\ \midrule
1&2\ord x=3\ord y=5\ord z=30&(15,10,6)\\
2,3,4,5&uv=w^5,\quad m=1,2,3,4&(3m,2m,m+1)\\
6,7&uv=w^3,\quad m=1,2&(5m,3m+1,2m)\\
8&uv=w^2,\quad m=1&(8,5,3).
\end{array}
\]
In the second row $i=m+1$, and in the third row $i=m+5$.
\end{proposition}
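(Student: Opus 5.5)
We plan to prove the proposition by matching the eight constructions of Example~14 with the eight exact-order strata through their order vectors. Theorem~\ref{thm:e8-product-openness} and Lemma~\ref{lem:e8-separation} then give that the strata are pairwise disjoint and cover all nonzero arcs. So it suffices to show two things: each family of Example~14 consists of arcs with the displayed exact order vector, and the displayed vectors coincide with $v_1,\dots,v_8$ in the stated order. The second point is bookkeeping. For $i=m+1$ with $m=1,\dots,4$, the formula $(3m,2m,m+1)$ gives $(3,2,2),(6,4,3),(9,6,4),(12,8,5)=v_2,\dots,v_5$. For $i=m+5$ with $m=1,2$, the formula $(5m,3m+1,2m)$ gives $(5,4,2),(10,7,4)=v_6,v_7$. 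The remaining rows give $(8,5,3)=v_8$ and $(15,10,6)=v_1$, matching the table before Lemma~\ref{lem:e8-separation}.

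For the first point we recall how Johnson--Koll\'ar build each family and read off orders. Family~1 consists of arcs with $2\ord x=3\ord y=5\ord z=30$ and generic leading coefficients; it is by definition the set of arcs with order vector $(15,10,6)$, so it equals $W_1(\mathbb C)$. The families on $uv=w^5$ come from the substitution reducing $x^2+y^3=z^5$, after a weighted change of coordinates along the $z$-arm, to an $A_4$-type equation $uv=w^5$, with arcs parametrized by $u=t^{m}(\cdots)$, $v=t^{5-m}(\cdots)$, $w=t(\cdots)$ and unit leading coefficients. We will substitute these parametrizations back into $x,y,z$ and compute the leading exponents as a weighted combination, exactly as in the arm recurrence $w_j=((m-j)v_1+je_f)/m$ of Lemma~\ref{lem:e8-resolution}. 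This yields $(3m,2m,m+1)$ with nonvanishing leading coefficients. The $A_2$ family on $uv=w^3$ (the $y$-arm) and the $A_1$ family on $uv=w^2$ (the $x$-arm) are handled in the same way.

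From this computation each Example~14 family $\mathcal F_i$ lies in $W_i(\mathbb C)$. For the reverse inclusion we use that Example~14 asserts the eight families exhaust the nonzero centered arcs. Given $\alpha\in W_i(\mathbb C)$, it lies in some $\mathcal F_k\subset W_k(\mathbb C)$. The strata are disjoint because their order vectors are distinct, so $k=i$. Alternatively, we can use Lemma~\ref{lem:e8-transversality}: the lift of $\alpha$ is transverse to $E_i$ at a smooth point, and such arcs are precisely what the corresponding chart parametrization of Example~14 produces.

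The main obstacle is faithfully transcribing Johnson--Koll\'ar's coordinate changes. Their parametrizations involve nonlinear substitutions such as completing $x^2+y^3$ along a branch, so leading terms could in principle cancel. We will first verify nonvanishing of leading coefficients by checking that the leading forms of $(x,y,z)$ are monomials in the chart leading coefficients. Should that comparison be ambiguous for a row, we fall back on the order computation along the resolution: the chart coordinates of Example~14 are the local coordinates \eqref{eq:e8-cyclic-charts} near $E_i$, so the orders equal $\ord_{E_i}$, which is $v_i$ by Lemma~\ref{lem:e8-resolution}.
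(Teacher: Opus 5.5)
Your proposal has a genuine gap in the reverse inclusion $W_i(\mathbb C)\subset\mathcal F_i$: both premises you use for it are false. Neither the eight strata $W_i$ nor the eight families of Example~14 cover the nonzero centered arcs. For instance, $(x,y,z)=(t^6\sqrt{1+t^8},\,-t^4,\,t^4)$ satisfies $x^2+y^3=z^5$ and has order vector $(6,4,4)=2v_2$, which is none of $v_1,\dots,v_8$. More generally, $\alpha(t^2)$ for any $\alpha\in W_i(\mathbb C)$ lies in no $W_j$, since the $v_j$ are primitive. The covering \eqref{eq:e8-cover} in the paper is by the closed sets $C_j$, not by the exact-order strata. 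Johnson--Koll\'ar's families are dense families in the components, not a partition of the arc space; already for $uv=w^n$ the families have $\ord w=1$, and nonzero arcs with $\ord w\ge 2$ belong to none of them. So ``$\alpha$ lies in some $\mathcal F_k$'' is exactly what must be proved, and the forward inclusion plus the bookkeeping of order vectors does not give ``exactly''. Your alternative through Lemma~\ref{lem:e8-transversality} would need an identification of the chart coordinates $(u,v,w)$ of Example~14 with local coordinates on $Y$ near $E_i$. You assert this but do not prove it. Moreover, \eqref{eq:e8-cyclic-charts} is written in the quotient coordinates $r=s^2,\,w$ of $B/G$, with no formula relating them to $x,y,z$; the same objection applies to your fallback for the leading coefficients.

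The missing idea is to invert each parametrization explicitly using formal roots. This only needs the relevant order to be divisible by $3$ or $5$. In the $A_4$ row, take $(x,y,z)=(u^3,u^2\lambda,uw)$ with $uv=w^5$ and $\lambda^3=v-1$. Since $\ord v=5-m>0$, $\lambda$ is a unit, so the order vector is $(3m,2m,m+1)$ and no cancellation can occur. Conversely, for $\alpha\in W_{m+1}(\mathbb C)$, $\ord x=3m$ gives $u$ with $u^3=x$. Then $w=z/u$, $v=w^5/u$ and $\lambda=y/u^2$ are power series with $\ord(u,v,w)=(m,5-m,1)$ and $uv=w^5$, and $x^2+y^3=z^5$ becomes $\lambda^3=v-1$. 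The $A_2$ row uses $(u^5,u^3w,u^2\mu)$ with $\mu^5=1+v$, inverted by $u^5=x$, $w=y/u^3$, $v=w^3/u$, $\mu=z/u^2$. The $A_1$ row uses $(u^7w,u^5,u^3\nu)$ with $\nu^5=1+v$, inverted by $u^5=y$, $w=x/u^7$, $v=w^2/u$, $\nu=z/u^3$. This proves both inclusions for every arc of $W_i(\mathbb C)$, not only for a dense subfamily. No appeal to Theorem~\ref{thm:e8-product-openness} or Lemma~\ref{lem:e8-separation} is needed.
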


\begin{proof}
Johnson and Koll\'ar use
$\operatorname{Arc}^*(X_5)=\operatorname{Arc}(X_5,0)$
\cite[p.~523]{JK}. Hence their set of complex formal arcs is the same
as $\mathcal A(\mathbb C)$; the superscript $*$ means that the arcs
are centered at the origin and does not remove the constant zero arc.
Example~14 does not attach the labels $W_1,\ldots,W_8$ to the families
individually, but lists the eight exact order vectors in the order
shown above \cite[Example~14, pp.~530--531]{JK}. By definition,
\[
W_i(\mathbb C)=
\{\alpha\in\mathcal A(\mathbb C):\ord(x,y,z)=v_i\}.
\]
Indeed, $Z_i$ makes every coefficient below
$p_i,q_i,r_i$ vanish, while $h_i\ne0$ says that all three first
possible coefficients are nonzero. The first family in the table is
therefore $W_1(\mathbb C)$. Explicitly, each of its arcs has a unique
expression
\[
x=t^{15}a(t),\qquad y=t^{10}b(t),\qquad z=t^6c(t),
\]
with $a,b,c\in\mathbb C[[t]]^\times$ and $a^2+b^3=c^5$; conversely,
every triple of units satisfying this identity defines an arc in
$W_1(\mathbb C)$.

We now prove equality for each of the other seven parameterized
families. For $uv=w^n$, the $m$-th general family in \cite{JK} has
\[
\ord(u,v,w)=(m,n-m,1),\qquad1\leq m<n
\]
\cite[Example~9, p.~525]{JK}. For $n=5$, use the parametrization
\[
(x,y,z)=(u^3,u^2\lambda,uw),
\qquad uv=w^5,\qquad \lambda^3=v-1.
\]
Since $\ord v=5-m>0$, the series $\lambda$ is a unit, and
\[
\ord(x,y,z)=(3m,2m,m+1)\qquad(1\leq m\leq4).
\]
Thus the parameterized family is contained in
$W_{m+1}(\mathbb C)$. Conversely, given
$\alpha=(x,y,z)\in W_{m+1}(\mathbb C)$, choose
$u\in\mathbb C[[t]]$ with $u^3=x$ and set
\[
w=z/u,\qquad v=w^5/u,\qquad \lambda=y/u^2.
\]
These quotients belong to $\mathbb C[[t]]$ and satisfy
\[
\ord(u,v,w)=(m,5-m,1),\qquad uv=w^5.
\]
The equation $x^2+y^3=z^5$ gives $\lambda^3=v-1$, so every arc in
$W_{m+1}(\mathbb C)$ arises from the displayed parametrization.

For $n=3$, take
\[
(x,y,z)=(u^5,u^3w,u^2\mu),
\qquad uv=w^3,\qquad \mu^5=1+v.
\]
For $m=1,2$ this gives
\[
\ord(x,y,z)=(5m,3m+1,2m),
\]
so its image is contained in $W_{m+5}(\mathbb C)$. Conversely, for
$\alpha=(x,y,z)\in W_{m+5}(\mathbb C)$ choose $u^5=x$ and put
\[
w=y/u^3,\qquad v=w^3/u,\qquad \mu=z/u^2.
\]
The resulting series satisfy
\[
\ord(u,v,w)=(m,3-m,1),\qquad
uv=w^3,\qquad \mu^5=1+v,
\]
which proves the reverse containment.

Finally, for $n=2$ and $m=1$, the parametrization may be written as
\[
(x,y,z)=(u^7w,u^5,u^3\nu),
\qquad uv=w^2,\qquad \nu^5=1+v.
\]
It has order vector $(8,5,3)$, and hence its image is contained in
$W_8(\mathbb C)$. Given any $(x,y,z)\in W_8(\mathbb C)$, choose
$u^5=y$ and set
\[
w=x/u^7,\qquad v=w^2/u,\qquad \nu=z/u^3.
\]
Then $\ord(u,v,w)=(1,1,1)$, and the original equation gives
$uv=w^2$ and $\nu^5=1+v$. This proves equality with
$W_8(\mathbb C)$.

The formal roots used here exist for the following elementary reason.
If $d\geq1$, $q\geq0$, and
$f=t^{dq}a(t)\in\mathbb C[[t]]$ with $a(0)\ne0$, then, after choosing
a $d$-th root of $a(0)$, the coefficients of a unit series $b(t)$
satisfying $b(t)^d=a(t)$ are determined recursively and uniquely.
Thus $t^qb(t)$ is a $d$-th root of $f$. In the constructions above,
the relevant orders are divisible by $3$ or $5$ as required. The
inverse constructions therefore apply to every arc in the indicated
$W_i(\mathbb C)$, not only to a dense subfamily. Since the eight order
vectors are pairwise distinct, the eight families of Example~14 are
in one-to-one correspondence with
$W_1(\mathbb C),\ldots,W_8(\mathbb C)$.
\end{proof}

\begin{remark}[Topologies and external results]
Cylinders, generic points, irreducible components, and curve selection
in this section are used in the scheme-theoretic Zariski topology of
the arc space. Theorem~\ref{thm:e8-product-openness} first establishes
Zariski openness of the exact-order strata in $\mathcal A$ and then
deduces openness of their complex point sets in the coefficientwise
product topology. The separation lemma uses Reguera's
corrected curve selection lemma, Fern\'andez de Bobadilla's formal
wedge-adjacency theorem, and the Nash theorem for surfaces of
Fern\'andez de Bobadilla and Pe Pereira.
\end{remark}
\subsection{Proof of Theorem \ref{mtb}}

\begin{proof}
Theorem~\ref{thm:e8-product-openness} establishes the openness of
each $W_i(\mathbb{C})$
in $\mathcal{A}(\mathbb{C})$ for the coefficientwise product
topology, as well as the asserted separation from the product
closures of $C_j(\mathbb{C})$ for $j\ne i$.
The equality
$W_i(\mathbb{C})\cap C_j(\mathbb{C})=\varnothing$
also follows directly from Lemma~\ref{lem:e8-separation}.
Finally, Proposition~\ref{prop:e8-arcology-correspondence}
identifies these exact-order families
with the eight families of Johnson--Koll\'ar's Example~14,
in the order specified in Subsection~\ref{subsec:e8-setup}.
This proves Theorem~\ref{mtb}.
\end{proof}

%\section{Acknowledgement}
	
%	On behalf of all authors, the corresponding author states that there is no conflict of interest.  H. Zuo acknowledges support from NSFC (grant No. 12671056) and BJNSF (grant No. 1252009).

\section*{Acknowledgment of AI assistance}
During the preparation of this manuscript, the authors used GPT (OpenAI)
for language editing, assistance in checking computations in
Section~\ref{sec:solution}, and assistance in developing and checking parts
of the mathematical arguments in Section~\ref{sec:e8-open-families}.
The authors take full responsibility for all mathematical results and the final
content of this manuscript.

\end{document}